\newcommand{\tabcaption}{\def\@captype{table}\caption}
\newcommand{\norm}[1]{\left\lVert#1\right\rVert}
\newcommand{\vertiii}[1]{|#1|}
\newcommand{\nvertiii}[1]{{\left\vert\kern-0.25ex\left\vert\kern-0.25ex\left\vert #1 \right\vert
                \kern-0.25ex\right\vert\kern-0.25ex\right\vert}}
\newcommand{\bmnabla}{\bm\nabla}
\newcommand{\bint}[2]{\langle#1\rangle_{#2}}
\newcommand{\lam}{\lambda}
\newcommand{\mysum}{\sum\limits}
\newtheorem{lem}{Lemma}[section]
\newtheorem{thm}{Theorem}[section]
\newtheorem{rem}{Remark}[section]
\newtheorem{assumption}{Assumption}
\newtheorem{algorithm}{Algorithm}
\numberwithin{equation}{section}
\begin{document}
\title
{
    \Large\bf A two-level algorithm for  the weak Galerkin discretization of diffusion problems
    \thanks
    {
        This work was supported by National Natural Science Foundation of China (11171239),  Major Research 
	Plan of  National Natural Science Foundation of China (91430105) and Open Fund of  Key Laboratory of Mountain 
	Hazards and Earth Surface Processes, CAS. 
    }
}

\author 
{
    Binjie Li\thanks{Email: libinjiefem@yahoo.com}, \quad
    Xiaoping Xie \thanks{Corresponding author. Email: xpxie@scu.edu.cn}\\
    {School of Mathematics, Sichuan University, Chengdu 610064, China}
}

\date{}
\maketitle
\begin{abstract}
    This paper analyzes a two-level  algorithm for the weak Galerkin (WG) finite element methods 
    based on local Raviart-Thomas (RT) and Brezzi-Douglas-Marini (BDM) mixed elements for two- and three-dimensional 
    diffusion problems with Dirichlet condition.  We first show the condition numbers of the stiffness matrices 
    arising from the WG methods are of $O(h^{-2})$. We use an extended version of the Xu-Zikatanov (XZ) identity 
    to derive the convergence of the   algorithm  without any regularity assumption. Finally we provide some numerical results.
    \vskip 0.4cm {\bf Keywords.}  diffusion problem, weak Galerkin finite element, condition number, 
    two-level algorithm, X-Z identity 
\end{abstract}
\renewcommand{\theequation}{\thesection.\arabic{equation}}

\section{Introduction}
Let $\Omega\subset R^d~(d=2,3)$ be a bounded polyhedral domain. Consider the following diffusion problem:
\begin{equation}\label{eq_problem}
    \left\{
        \begin{array}{rcll}
            -\text{div}({\bm a}\bmnabla u) &= & f & \text{in $\Omega$},\\
            u &=& 0 & \text{on $\partial\Omega$},
        \end{array}
    \right.
\end{equation}
where ${\bm a}\in [L^{\infty}(\Omega)]^{d\times d}$ is a given  symmetric positive-definite permeability
tensor, $f\in L^2(\Omega)$.\par

The weak Galerkin(WG) finite element method was first introduced and analyzed by Wang and Ye 
\cite{WangYe2013} for general second order elliptic problems and later developed by their research group in 
\cite{Mu-Wang-Ye1, Mu-Wang-Ye2,Mu-Wang-Wang-Ye, Mu-Wang-Ye-Zhao, Mu-Wang-Wei-Ye,WGSTOKES, WGBIHARMONIC}. 
It is designed by using a weakly defined gradient operator over 
functions with discontinuity. The method, based on local Raviart-Thomas (RT) elements \cite{RT} or 
Brezzi-Douglas-Marini (BDM) elements \cite{BrezziDouglasMarini1985}, allows the use of totally 
discontinuous piecewise polynomials in the finite element procedure, as is common in discontinuous 
Galerkin methods \cite{Arnold-Brezzi-Cockburn-Marini} and hybridized discontinuous Galerkin methods 
\cite{ Cockburn-GOPALAKRISHNAN-LAZAROV}.
As shown  in \cite{WangYe2013, 
    Mu-Wang-Ye1, Mu-Wang-Ye2,Mu-Wang-Wang-Ye,Mu-Wang-Wei-Ye}, the WG method also enjoys an easy-to-implement 
formulation that  inherits the physical property of mass conservation locally on each element.  
We note that when $\bm a$ in (\ref{eq_problem}) is a piecewise-constant matrix, the WG method, 
by introducing the discrete weak gradient as an independent variable, is equivalent to some  
hybridized version of the corresponding mixed RT or BDM method \cite{ArnoldBrezzi1985, 
    BrezziDouglasMarini1985} (cf. Remark \ref{remark-1}).

As one knows, multigrid methods are among the most efficient methods for solving linear algebraic 
systems arising from the discretization of partial differential equations. By now, the research of the
multigrid methods for second order elliptic problems has reached a mature stage in some sense
(see \cite{Ban1.,Ban2.,Br.,Bpj3.,Bpj5,Bpj1,Bp1.,B2,H1.,Wu-Chen-Xie-Xu,Xu1992,Xu1996, 
    Xu1997, Xu-Z2002,XU-CHEN} and the references therein).  Especially, Xu,  Chen, and Nochetto \cite{XU-CHEN} presented
an overview of the multigrid methods   in an elegant fashion. For the model problem (\ref{eq_problem}), Brenner \cite{Brenner.S1992} 
developed an optimal order multigrid method for the lowest-order Raviart-Thomas mixed triangular 
finite element. The algorithm and the convergence analysis are based on the equivalence between 
Raviart-Thomas mixed methods and certain nonconforming methods. 
In \cite{GOPALAKRISHNAN2009}
Gopalakrishnan and Tan analyzed the convergence of a variable V-cycle multigrid algorithm for the 
hybridized mixed method for Poisson problems. Following the same idea, Cockburn et al.  \cite{Cockburn.B;2013} 
analyzed the convergence of a  non-nested multigrid V-cycle  algorithm, with a single smoothing step 
per level, for one type of HDG method. One may refer to \cite{mg_p1, Brenner.S1992, Brenner1999, Brenner2004,DOBREV;2006,
    Duan;2007,GOPALAKRISHNAN;2003, KRAUS;2008, KRAUS;2008b} for multigrid algorithms for nonconforming 
and DG methods.

This paper is to analyze a two-level algorithm for the WG methods.  We show 
the condition numbers of the WG  systems  are of $O(h^{-2})$.  We follow the basic ideas of 
\cite{Xu-Z2002, XU-CHEN,Chen_XZ}  to  establish  an extended version of the Xu-Zikatanov (XZ) identity \cite{Xu-Z2002}, 
and then   derive the convergence of the algorithm without any regularity assumption.

The rest of this paper is organized as follows. Section \ref{sec_def_WG}  introduces the WG methods. 
Section \ref{sec_estimate}  analyzes the conditioning  of the WG systems. Section \ref{sec_multigrid} describes the two-level algorithm, and analyzes  its convergence. Section \ref{sec_numerical} provides some numerical 
experiments to verify  our theoretical results.\par  
\section{Weak Galerkin finite element method}\label{sec_def_WG}
\subsection{Preliminaries and Notations}\label{notations}
Throughout this paper, we shall use the standard definitions of Sobolev spaces and their 
norms(\cite{ADAMS}), namely, for an arbitrary open set, $D$, of $\mathbb R^d$ and any 
nonnegative integer $s$, 
\begin{displaymath}
    \begin{array}{rcl}
        H^s(D) &:=& \{v\in L^2(D):\partial^{\alpha}v\in L^2(D),\forall|\alpha|\leqslant s\},\\
        \norm{v}_{s,D} &:=& \left(\mysum_{0\leq j\leq s}|v|_{j,D}^2\right)^{\frac{1}{2}}, 
        \quad 
        |v|_{j,D}:=\left(\mysum_{|\alpha|= j}\int_{D}|\partial^{\alpha}v|^2\right)^{\frac{1}{2}}.
    \end{array}
\end{displaymath}
We use $(\cdot, \cdot)_D$ and $\bint{\cdot,\cdot}{\partial D}$ to denote the standard $L^2$ 
inner products on $L^2(D)$ and $L^2(\partial D)$, respectively, and   use $\norm{\cdot}_D$ 
and $\norm{\cdot}_{\partial D}$ to denote the norms induced by $(\cdot,\cdot)_D$ and 
$\bint{\cdot,\cdot}{\partial D}$, respectively. In particular, $\norm{\cdot}$ abbreviates 
$\norm{\cdot}_\Omega$.
\par
Let $\mathcal T_h$ be a regular triangulation of $\Omega$. For any $T\in\mathcal T_h$, we 
denote by $h_T$ the diameter of $T$ and set $h:=\max\limits_{T\in\mathcal T_h}h_T$. We denote by
$\mathcal F_h$    the set of all faces of $\mathcal T_h$. 
\par
We introduce some mesh-dependent inner products and mesh-dependent norms as follows. 
We define $\bint{\cdot,\cdot}{h}:L^2( \mathcal F_h )\times L^2(\mathcal F_h )\to\mathbb R$ by
\begin{equation}\label{def_inner_products_Mh}
    \bint{\lam,\mu}{h}:= \sum_{T\in\mathcal T_h}h_T\int_{\partial T}\lam\mu,
    ~\forall\lam,\mu\in L^2(\mathcal F_h ),
\end{equation}
and $(\cdot,\cdot)_h:[L^2(\Omega)\times L^2(\mathcal F_h )]\times[L^2(\Omega)\times L^2(\mathcal F_h )]
\to\mathbb R$   by
\begin{equation}\label{def_inner_product_Wh}
    ((u,\lam),(v,\mu))_h:=(u,v)_{\Omega} + \bint{\lam,\mu}{h},~\forall (u,\lam),(v,\mu)\in 
    L^2(\Omega)\times L^2(\mathcal F_h ).
\end{equation}
With a little abuse of notations, we use $\norm{\cdot}_h$ to denote the norms induced by the 
inner products $\bint{\cdot,\cdot}{h}$ and $(\cdot,\cdot)_h$, i.e.,
\begin{eqnarray}
    \norm{\mu}_h &:=& \bint{\mu,\mu}{h}^{\frac{1}{2}},
    ~~~~~~~~~~~\forall\mu\in L^2(\mathcal F_h ),\label{def_norm_Mh}\\
    \norm{(v,\mu)}_h &:=& ((v,\mu),(v,\mu))_h^{\frac{1}{2}}=\left(\norm{v}^2+\norm{\mu}_h^2\right)^{\frac{1}{2}},
    ~\forall (v,\mu)\in L^2(\Omega)\times L^2(\mathcal F_h ).\label{def_norm_Wh}
\end{eqnarray}

We also need the following  elementwise norm and seminorms:  for any $\mu \in L^2(\mathcal F_h )$, 
\begin{displaymath}
    \norm{\mu}_{h,\partial T} := h_T^{\frac{1}{2}}\norm{\mu}_{\partial T},
\end{displaymath}
\begin{displaymath}
    \begin{array}{rcl}
        |\mu|^2_{h,\partial T} := h_T^{-1}\norm{\mu-m_T(\mu)}^2_{\partial T}\quad \text{with}\quad
        m_T(\mu) := \frac{1}{d+1}\sum\limits_{F\in\mathcal F_T}\frac{1}{|F|}\int_F\mu,\\
    \end{array}
\end{displaymath}
and
\begin{equation}\label{semi-norm}
    |\mu|_h: = (\sum_{T\in\mathcal T_h}|{\mu}|_{h, \partial T}^2)^{\frac{1}{2}}, 
\end{equation}
where $\mathcal F_T$ denotes the set of all faces of $T$, and $|F|$ denotes the (d-1)-dimensional 
Hausdorff measure of $F$. 

\par
Throughout this paper, $x\lesssim y~( x \gtrsim y)$ means $x\leqslant Cy~(x\geq Cy)$, 
where $C$ denotes a positive constant that is independent of the mesh size $h$. The notation 
$x \sim y$ abbreviates $x \lesssim y\lesssim x$.
\subsection{Weak Galerkin  formulations}
We first introduce two spaces:
\begin{displaymath}
    \begin{array}{rcl}
        V_h &:=& \{v_h\in L^2(\Omega):v_h|_T\in V(T),~\forall T\in\mathcal T_h\},\\
        M_h^0 &:=& \{\mu_h\in L^2(\mathcal F_h ):\mu_h|_F\in M(F),~\forall F\in\mathcal F_h,
        \mu_h|_{\partial\Omega}=0\},
    \end{array}
\end{displaymath}
where $V(T)$ and $M(F)$ denote  two local finite dimensional spaces.  

For $T\in\mathcal T_h$, let $\bm W(T)$ be a local finite dimensional
subspace of $[L^2(T)]^d$.
Then, following \cite{WangYe2013}, we introduce the discrete weak gradient $\bmnabla_w:L^2(T)\times L^2(\partial T)\to\bm W(T)$ defined by
\begin{equation}\label{grad-v-mu}
    \bmnabla_w(v,\mu) = \bmnabla_w^iv + \bmnabla_w^b\mu,~\forall (v,\mu)\in L^2(T)\times L^2(\partial T),
\end{equation}
where 
  $\bmnabla_w^iv, \bmnabla_w^b\mu\in \bm W(T)$  satisfy,   for any $\bm q\in\bm W(T)$, 
\begin{equation}\label{grad-v}
    (\bmnabla_w^iv,\bm q)_T = -(v,div~\bm q)_T,
\end{equation}
\begin{equation}\label{grad-mu}
    (\bmnabla_w^b\mu,\bm q)_T = \bint{\mu,\bm q\cdot\bm n}{\partial T}.
\end{equation}
\par
The WG method for problem \eqref{eq_problem} reads as follows(\cite{WangYe2013}): 
Seek $(u_h, \lambda_h) \in V_h\times M_h^0$ such that
\begin{equation}\label{discretization1}
    a_h((u_h,\lambda_h),(v_h,\mu_h))_{\Omega} = (f, v_h)_{\Omega},~\forall (v_h,\mu_h) \in V_h\times M_h^0,
\end{equation}
where
\begin{displaymath}
    a_h((u_h,\lambda_h),(v_h,\mu_h)): = (\bm a\bmnabla_w(u_h,\lambda_h),\bmnabla_w(v_h,\mu_h))_{\Omega}.
\end{displaymath}
\par
For any set $D$,  we denote by $P_j(D)$ the set of polynomials of degree $\leq j$ on $D$.   
This paper considers two type of WG methods \cite{WangYe2013} which are based on local RT 
and BDM mixed elements, respectively:
\begin{description}
    \item[Type 1. ] $V(T) = P_k(T)$, $M(F)= P_k(F)$, $\bm W(T)=[P_k(T)]^d+P_k(T)\bm x$.
    \item[Type 2. ] $V(T) = P_{k-1}(T)$, $M(F)= P_k(F)$, $\bm W(T) = [P_k(T)]^d$ ($k\geq1$).
\end{description}
\begin{rem}\label{remark-1}
    When $\bm a$ is a piecewise constant matrix, we can show that the two type of WG methods are 
    equivalent to the hybridized version of the corresponding mixed RT and BDM method 
    (\cite{ArnoldBrezzi1985,BrezziDouglasMarini1985}) respectively. 
    In fact,  by introducing the vector
    $\bm p_h :=\bm a \bmnabla_w(u_h,\lambda_h)$ and the space $ \bm W_h:=\{\bm q_h\in [L^2(\Omega)]^d:\bm q_h|_T\in\bm W(T)\},$ 
    it's straightforward that the WG scheme \eqref{discretization1} is equivalent to the following problem: 
    Seek $(\bm p_h,u_h,\lambda_h)\in\bm W_h\times V_h\times M_h^0$, such that
    \begin{displaymath}
        \begin{array}{rcll}
            (\bm a^{-1}\bm p_h,\bm q_h)_\Omega + \sum\limits_{T\in\mathcal T_h}(u_h,\text{div} \bm q_h)_T 
            - \sum\limits_{T\in\mathcal T_h}\bint{\lam_h,\bm q_h\cdot\bm n}{\partial T} &=& 0,\\ 
            - \sum\limits_{T\in\mathcal T_h}(v_h,\text{div} \bm p_h)_T &=& (f, v_h)_\Omega, \\
            \sum\limits_{T\in\mathcal T_h}\bint{\bm p_h \cdot \bm n, \mu_h}{\partial T}&=& 0
        \end{array}
    \end{displaymath}
    hold for all $(\bm q_h,v_h,\mu_h)\in \bm W_h\times V_h\times M_h^0$.
    This scheme is no other than the hybridized version of the RT  mixed element method (cf. (1.18)
    in \cite{ArnoldBrezzi1985}) or the  BDM mixed method (cf. (1.13) in \cite{BrezziDouglasMarini1985}). 
\end{rem}

In the following we give an operator form and a matrix form of the WG discretization \eqref{discretization1}. 
Let  $\{\phi_i:i=1,2,\ldots, M\}\subset V_h$ and $\{\eta_i:i=1,2,\ldots, N\} \subset M_h^0$ be nodal bases for   
$V_h$ and $M_h^0$, respectively. Denote  by $\tilde u_h, \tilde v_h\in \mathbb R^M$ the vectors of coefficients 
of $u_h$, $v_h$ in the $\{\phi_i\}$-basis, and by $\tilde\lambda_h, \tilde\mu_h\in \mathbb R^N$ the vectors of
coefficients of $\lambda_h,\mu_h$ in the $\{\eta_i\}$-basis, respectively. 

Define the operators $\mathcal C_h:V_h\to V_h$, $\mathcal B_h:V_h\to M_h^0$, $\mathcal B_h^t: M_h^0\to V_h$, 
$\mathcal D_h:M_h^0\to M_h^0$, and the matrices $B_h\in\mathbb R^{N\times M},C_h\in\mathbb R^{M\times M}$ ,
$D_h\in\mathbb R^{N \times N}$ respectively by
\begin{displaymath}
    \begin{array}{rcll}
        (\mathcal C_hu_h, v_h)_{\Omega}
        &:=&(\bm a\bmnabla^i_w u_h,\bmnabla^i_w v_h)_{\Omega}=:\tilde u_h^T C_h \tilde v_h,&\forall u_h, v_h \in V_h,\\
        \langle\mathcal B_h u_h, \lam_h\rangle_h &:=&(\bm a\bmnabla^i_w u_h, \bmnabla^b_w\lam_h)_{\Omega} 
        =:(u_h,\mathcal B_h^t \lam_h)_\Omega=:\tilde u_h^T B_h^T \tilde \lam_h,
        &\forall u_h\in V_h,\lambda_h \in M_h^0,\\
        \langle\mathcal D_h\lam_h,\mu_h\rangle_h &:=&(\bm a\bmnabla^b_w\lam_h,\bmnabla^b_w\mu_h)_{\Omega}
        =:\tilde \lam_h^T D_h^T \tilde \mu_h,&\forall\lam_h,\mu_h\in M_h^0.\\
    \end{array}
\end{displaymath}
Let $\mathcal A_h:V_h\times M_h^0\to V_h\times M_h^0$ and $A_h\in\mathbb R^{(M+N)\times(M+N)}$ be defined by
\begin{equation}\label{def_A}
    \begin{array}{l}
        (\mathcal A_h(u_h,\lam_h),(v_h,\mu_h))_h
        :=a_h((u_h,\lam_h),(v_h,\mu_h))=:
        (\tilde u_h^t~\tilde\lam_h^t)A_h\left(\begin{array}{c}\tilde v_h\\\tilde\mu_h\end{array}\right)
    \end{array}
\end{equation}
for any $ (u_h,\lam_h),(v_h,\mu_h)\in V_h\times M_h^0$. Then we have
\begin{equation}
    \mathcal A_h = \left(
        \begin{array}{cc}
            \mathcal C_h & \mathcal B_h^t \\
            \mathcal B_h & \mathcal D_h \\
        \end{array}
    \right),\quad 
    A_h = \left(
        \begin{array}{cc}
            C_h & B_h^T \\
            B_h & D_h \\
        \end{array}
    \right),
\end{equation}
and the WG discretization \eqref{discretization1} is equivalent to the following system:
Seek $(u_h,\lam_h)\in V_h\times M_h^0$ such that
\begin{equation}\label{operator system}
    \mathcal A_h(u_h,\lam_h) = b_h
\end{equation}
with $b_h:=(f_h,0)$ and    $f_h\in V_h$  denoting the standard $L^2-$orthogonal projection of $f$ onto $V_h$.

\section{Conditioning of   WG methods}\label{sec_estimate}
In what follows we assume $\mathcal T_h$ to be a quasi-uniform triangulation. We recall that   
$\norm{\cdot}_h$, $|{\cdot}|_h$, $\norm{\cdot}_{ T}$, $\norm{\cdot}_{h,\partial T}$, and 
$|{\cdot}|_{h,\partial T}$ are defined in Subsection \ref{notations}.

We first present a basic estimate as follows.
\begin{lem}\label{ref_lem_basic_inequality}
    For any $\mu_h\in M^0_h$, it holds
    \begin{equation}\label{basic_a}
        \norm{\mu_h}_h \lesssim |{\mu_h}|_h.
    \end{equation}
\end{lem}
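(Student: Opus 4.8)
The plan is to read \eqref{basic_a} as a discrete Poincar\'e--Friedrichs estimate: the seminorm $|\mu_h|_h$ only senses, on each $\partial T$, the oscillation of $\mu_h$ about its average $m_T(\mu_h)$, so the heart of the matter is to control the whole family of those averages, and this is where the homogeneous condition $\mu_h|_{\partial\Omega}=0$ must enter. I would proceed in three steps.

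\emph{Step 1 (elementwise split).} Writing $m_T:=m_T(\mu_h)$ and using $|\mu_h|^2\le 2|\mu_h-m_T|^2+2m_T^2$ pointwise on $\partial T$,
\[
\norm{\mu_h}_{h,\partial T}^2 = h_T\norm{\mu_h}_{\partial T}^2 \le 2h_T\norm{\mu_h-m_T}_{\partial T}^2 + 2h_T|\partial T|\,m_T^2 = 2h_T^2\,|\mu_h|_{h,\partial T}^2 + 2h_T|\partial T|\,m_T^2 .
\]
Since $h_T|\partial T|\sim|T|$ by shape regularity, introducing the piecewise constant $w_h\in L^2(\Omega)$ with $w_h|_T:=m_T$ and summing over $T\in\mathcal T_h$ gives
\[
\norm{\mu_h}_h^2 \lesssim h^2\,|\mu_h|_h^2 + \norm{w_h}^2 \lesssim |\mu_h|_h^2 + \norm{w_h}^2 ,
\]
the last step using $h\le\operatorname{diam}(\Omega)$; it therefore remains to prove $\norm{w_h}^2\lesssim|\mu_h|_h^2$.

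\emph{Step 2 (face estimates).} Fix a face $F$ and an element $T$ with $F\in\mathcal F_T$, and let $\bar\mu_F$ be the mean value of $\mu_h$ on $F$; note $\bar\mu_F=0$ whenever $F\subset\partial\Omega$, because $\mu_h|_{\partial\Omega}=0$. Since $\bar\mu_F$ is the best constant $L^2(F)$-approximation of $\mu_h$ and $F\subset\partial T$,
\[
|F|^{1/2}\,|m_T-\bar\mu_F| = \norm{m_T-\bar\mu_F}_F \le 2\norm{\mu_h-m_T}_F \le 2\norm{\mu_h-m_T}_{\partial T} = 2h_T^{1/2}\,|\mu_h|_{h,\partial T} ,
\]
so $|F|\,|m_T-\bar\mu_F|^2\le 4h_T\,|\mu_h|_{h,\partial T}^2\lesssim h\,|\mu_h|_{h,\partial T}^2$. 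Hence for an interior face $F=\partial T_1\cap\partial T_2$ the jump $[w_h]_F:=m_{T_1}-m_{T_2}$ satisfies $|F|\,[w_h]_F^2\lesssim h\big(|\mu_h|_{h,\partial T_1}^2+|\mu_h|_{h,\partial T_2}^2\big)$, and for a boundary face $F=\partial T\cap\partial\Omega$ (where $\bar\mu_F=0$) one has $|F|\,m_T^2\lesssim h\,|\mu_h|_{h,\partial T}^2$.

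\emph{Step 3 (discrete Poincar\'e).} Apply a discrete Poincar\'e--Friedrichs inequality for the piecewise constant $w_h$ on the quasi-uniform mesh (a standard fact for broken piecewise-polynomial spaces, provable by the usual duality or finite-volume argument): with $\mathcal F_h^i$ and $\mathcal F_h^\partial$ the interior and boundary faces and $T_F$ the element containing a boundary face $F$,
\[
\norm{w_h}^2 \lesssim \frac{1}{h}\Big(\sum_{F\in\mathcal F_h^i}|F|\,[w_h]_F^2 \;+\; \sum_{F\in\mathcal F_h^\partial}|F|\,m_{T_F}^2\Big).
\]
Substituting the Step 2 bounds and using that every $T$ has at most $d+1$ faces, the right-hand side is $\lesssim\sum_{T\in\mathcal T_h}|\mu_h|_{h,\partial T}^2=|\mu_h|_h^2$; together with Step 1 this proves \eqref{basic_a}. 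I expect the discrete Poincar\'e--Friedrichs inequality to be the only real obstacle: it is precisely the place where the homogeneous boundary condition is used (through $\bar\mu_F=0$ on $\partial\Omega$), and without that condition \eqref{basic_a} fails, since a nonzero constant $\mu_h$ has $|\mu_h|_h=0<\norm{\mu_h}_h$.
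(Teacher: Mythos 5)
Your proof is correct, but it takes a genuinely different route from the paper's. The paper (Appendix~A) establishes \eqref{basic_a} by constructing an elementwise lifting $\mathcal R:L^2(\partial T)\to P_1(T)+\Lambda(T)$ — a Crouzeix--Raviart interpolant corrected by face bubbles matching the moments against $M(F)$ — proving by scaling that $\norm{\mu}_{h,\partial T}\sim\norm{\mathcal R\mu}_T$ and $\vertiii{\mu}_{h,\partial T}\sim|\mathcal R\mu|_{1,T}$, and then applying the Poincar\'e--Friedrichs inequality for piecewise $H^1$ functions (Brenner, 2003) to $\mathcal R\mu_h$, which is weakly continuous at interior face barycenters and vanishes on $\partial\Omega$. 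You instead collapse everything onto the piecewise constant $w_h$ of the averages $m_T(\mu_h)$ and apply the discrete Poincar\'e--Friedrichs inequality for piecewise constants with jump and boundary penalization; your local estimates (the triangle inequality through the face mean $\bar\mu_F$, the identity $\norm{\mu_h-m_T}_{\partial T}=h_T^{1/2}\vertiii{\mu_h}_{h,\partial T}$, the use of the boundary condition only through $\bar\mu_F=0$ on $\partial\Omega$, and the $1/h$ scaling of the piecewise-constant Poincar\'e inequality) are all sound. Both arguments ultimately rest on a broken Poincar\'e--Friedrichs inequality as the one nontrivial external ingredient — and the paper's own reference covers your piecewise-constant version, since $w_h$ has zero broken gradient and the jump of a constant equals its mean — but your reduction is more elementary: it dispenses with the lifting operator and the attendant scaling-argument norm equivalences, at the modest cost of carrying the face-jump bookkeeping explicitly. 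Your closing remark that the estimate fails for nonzero constants without the condition $\mu_h|_{\partial\Omega}=0$ is the correct sanity check and correctly locates where that hypothesis enters.
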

\begin{proof}	
    See Appendix \ref{append_2}.
\end{proof}

 For any simplex $T$, define 
 $$M(\partial T):=\{\mu\in L^2(\partial T):\mu|_F\in M(F),
    \text{ for each face $F$ of $T$}\}.$$
The following lemma  gives some basic estimates of weak gradients. 

\begin{lem}\label{lem_basic_inequalities}
    For any any $T\in\mathcal T_h$ and $(v,\mu)\in V(T)\times M(\partial T)$, it holds
    \begin{subequations}
        \begin{eqnarray}
            \norm{\bmnabla^i_w v}_T &\sim& h_T^{-1}\norm{v}_T,\label{basic_b}\\
            \norm{\bmnabla^b_w \mu}_T &\sim& h_T^{-1}\norm{\mu}_{h, \partial T},\label{basic_c}\\
            \norm{\bmnabla_w(v,\mu)}_T &\sim& h_T^{-1}\norm{v-m_T(\mu)}_T + \vertiii{\mu}_{h,\partial T}.\label{basic_d}
        \end{eqnarray}
    \end{subequations}
\end{lem}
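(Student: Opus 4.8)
The plan is to work on the reference simplex and exploit a standard scaling (affine-equivalence) argument, since all three estimates are equivalences between finite-dimensional norms that become $h$-independent after rescaling. First I would fix $T\in\mathcal T_h$ and introduce the affine map $F_T:\hat T\to T$ from a fixed reference simplex $\hat T$, together with the induced pullbacks on $V(T)$, $M(\partial T)$, and $\bm W(T)$ (for $\bm W(T)$ one uses the Piola transform so that the defining relations \eqref{grad-v} and \eqref{grad-mu} are preserved up to the Jacobian). Under quasi-uniformity, $h_T\sim h_{\hat T}$-scaling gives $\|\cdot\|_T\sim h_T^{d/2}\|\cdot\|_{\hat T}$, $\|\cdot\|_{\partial T}\sim h_T^{(d-1)/2}\|\cdot\|_{\partial\hat T}$, and $\operatorname{div}$ picks up a factor $h_T^{-1}$. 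Then each of \eqref{basic_b}–\eqref{basic_d} reduces to a norm-equivalence on the reference element, which holds by finite-dimensionality provided the relevant map is injective.

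For \eqref{basic_b}: on $\hat T$, the map $\hat v\mapsto\hat\bmnabla_w^i\hat v$, defined by $(\hat\bmnabla_w^i\hat v,\hat{\bm q})_{\hat T}=-(\hat v,\operatorname{div}\hat{\bm q})_{\hat T}$ for all $\hat{\bm q}\in\bm W(\hat T)$, is linear on the finite-dimensional space $V(\hat T)$. Its kernel consists of $\hat v$ with $(\hat v,\operatorname{div}\hat{\bm q})_{\hat T}=0$ for all $\hat{\bm q}\in\bm W(\hat T)$; since $\operatorname{div}\bm W(\hat T)\supseteq P_{k-1}(\hat T)$ (true for both Type~1 and Type~2 choices) and $V(\hat T)\subseteq P_k(\hat T)$, this forces $\hat v$ to be $L^2$-orthogonal to $P_{k-1}(\hat T)$; but in fact for both types the inclusion $\operatorname{div}\bm W(\hat T)\supseteq V(\hat T)$ can be checked directly, so the kernel is trivial and $\|\hat\bmnabla_w^i\hat v\|_{\hat T}\sim\|\hat v\|_{\hat T}$. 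Reintroducing the $h_T$-scaling (the extra $h_T^{-1}$ coming from $\operatorname{div}$) yields $\|\bmnabla_w^iv\|_T\sim h_T^{-1}\|v\|_T$. For \eqref{basic_c}: similarly $\hat\mu\mapsto\hat\bmnabla_w^b\hat\mu$ on $M(\partial\hat T)$ is defined by $(\hat\bmnabla_w^b\hat\mu,\hat{\bm q})_{\hat T}=\langle\hat\mu,\hat{\bm q}\cdot\hat{\bm n}\rangle_{\partial\hat T}$; its kernel is the set of $\hat\mu$ with $\langle\hat\mu,\hat{\bm q}\cdot\hat{\bm n}\rangle_{\partial\hat T}=0$ for all $\hat{\bm q}\in\bm W(\hat T)$, and since the normal traces of $\bm W(\hat T)$ on $\partial\hat T$ span all of $M(\partial\hat T)$ (this is exactly the unisolvence property of the RT/BDM normal degrees of freedom), the kernel is trivial, giving $\|\hat\bmnabla_w^b\hat\mu\|_{\hat T}\sim\|\hat\mu\|_{\partial\hat T}\sim h_{\hat T}^{-1/2}\|\hat\mu\|_{h,\partial\hat T}$; scaling back gives \eqref{basic_c}.

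Estimate \eqref{basic_d} is the one requiring care, and I expect it to be the main obstacle. Because $\bmnabla_w(v,\mu)=\bmnabla_w^iv+\bmnabla_w^b\mu$, a naive triangle inequality combined with \eqref{basic_b}–\eqref{basic_c} gives only the upper bound $\|\bmnabla_w(v,\mu)\|_T\lesssim h_T^{-1}\|v\|_T+\vertiii{\mu}_{h,\partial T}$, which is \emph{not} equivalent to the claimed $h_T^{-1}\|v-m_T(\mu)\|_T+\vertiii{\mu}_{h,\partial T}$ — the point is the cancellation of the constant part of $\mu$. The fix is the observation that if $\mu$ is a (piecewise on $\partial T$) constant equal to its mean $m_T(\mu)$, then $\bmnabla_w^b\mu=0$: indeed $(\bmnabla_w^b c,\bm q)_T=c\langle 1,\bm q\cdot\bm n\rangle_{\partial T}=c\int_T\operatorname{div}\bm q=0$ is false in general, so instead I would use that $\bmnabla_w(c,c)=0$ for any constant $c$ (since $\bmnabla_w(c,c)$ satisfies $(\bmnabla_w(c,c),\bm q)_T=-(c,\operatorname{div}\bm q)_T+\langle c,\bm q\cdot\bm n\rangle_{\partial T}=c\big(-\int_T\operatorname{div}\bm q+\int_{\partial T}\bm q\cdot\bm n\big)=0$ by the divergence theorem). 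Hence, writing $c:=m_T(\mu)$, we have $\bmnabla_w(v,\mu)=\bmnabla_w(v-c,\mu-c)$, and now the quantity $\|v-c\|_T + h_T\vertiii{\mu-c}_{h,\partial T}$ — noting $\vertiii{\mu-c}_{h,\partial T}=\vertiii{\mu}_{h,\partial T}$ since the seminorm kills constants — together with $h_T^{-1}\|v-c\|_T$ on the reference element define two norms on the quotient-type space, and one checks on $\hat T$ that $\|\hat\bmnabla_w(\hat v-\hat c,\hat\mu-\hat c)\|_{\hat T}$ vanishes exactly when $\hat v-\hat c=0$ and $\hat\mu-\hat c=0$ (i.e. when the pair is a constant), so by finite-dimensional norm equivalence on that quotient space $\|\hat\bmnabla_w(\hat v,\hat\mu)\|_{\hat T}\sim\|\hat v-\hat c\|_{\hat T}+\vertiii{\hat\mu}_{h,\partial\hat T}$; scaling back to $T$ (with the $h_T^{-1}$ from $\operatorname{div}$ attaching only to the first term, while the seminorm $\vertiii{\cdot}_{h,\partial T}$ already carries the matching $h_T$-weight by its definition $|\mu|_{h,\partial T}^2=h_T^{-1}\|\mu-m_T(\mu)\|_{\partial T}^2$) produces exactly \eqref{basic_d}. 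The injectivity step — verifying that $\hat\bmnabla_w(\hat v,\hat\mu)=0$ forces $(\hat v,\hat\mu)$ to be a common constant — is where the specific structure of the RT/BDM pair $\bm W(T)$ and its unisolvence really enters, and is the crux I would spend the most effort on; the rest is routine scaling bookkeeping.
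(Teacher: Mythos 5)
Your proposal is correct and follows essentially the same route as the paper's Appendix B: an affine scaling to the reference simplex, finite-dimensional norm equivalence via injectivity of the weak-gradient maps (resting on the RT/BDM unisolvence properties), and, for \eqref{basic_d}, the key identity $\bmnabla_w(c,c)=0$ for constants $c$, which lets you replace $(v,\mu)$ by $(v-m_T(\mu),\mu-m_T(\mu))$ and work on the subspace where $m_T(\mu)=0$ (the paper's $\Sigma(T)$). The only difference is cosmetic: you supply slightly more detail on the kernel computations for $\bmnabla_w^i$ and $\bmnabla_w^b$ than the paper does.
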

\begin{proof}
    See Appendix \ref{append_1}.
\end{proof}
In view of Lemmas \ref{ref_lem_basic_inequality}-\ref{lem_basic_inequalities}, we have the following conclusion. 
\begin{thm}\label{bound1}
    For any $(v_h,\mu_h)\in V_h\times M_h^0$, it holds
    \begin{equation}\label{left-right}
        \|(v_h,\mu_h)\|_h^2 \lesssim a_h((v_h,\mu_h),(v_h,\mu_h))\lesssim h^{-2}\|(v_h,\mu_h)\|_h^2.
    \end{equation}
\end{thm}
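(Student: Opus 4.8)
The plan is to turn the bilinear form into a sum of local weak-gradient norms and then read off both inequalities from Lemmas~\ref{ref_lem_basic_inequality} and \ref{lem_basic_inequalities}. First I would use that $\bm a$ is symmetric, bounded and uniformly positive definite, so that
$$a_h((v_h,\mu_h),(v_h,\mu_h))\sim\norm{\bmnabla_w(v_h,\mu_h)}^2=\sum_{T\in\mathcal T_h}\norm{\bmnabla_w(v_h,\mu_h)}_T^2,$$
and then apply \eqref{basic_d} termwise. Since $\mathcal T_h$ is quasi-uniform, $h_T\sim h$, and using $\sum_{T\in\mathcal T_h}|\mu_h|_{h,\partial T}^2=|\mu_h|_h^2$ this yields the working equivalence
$$a_h((v_h,\mu_h),(v_h,\mu_h))\sim h^{-2}\sum_{T\in\mathcal T_h}\norm{v_h-m_T(\mu_h)}_T^2+|\mu_h|_h^2.$$

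The only auxiliary estimate I would need is a scaling bound for the element-mean operator, namely $\norm{m_T(\mu_h)}_T\lesssim\norm{\mu_h}_{h,\partial T}$ for each $T\in\mathcal T_h$. This follows from the definition of $m_T(\mu_h)$, the Cauchy--Schwarz inequality on each face $F\in\mathcal F_T$, and the shape-regularity scalings $|T|\sim h_T^d$, $|F|\sim h_T^{d-1}$, since $\norm{m_T(\mu_h)}_T^2=|T|\,|m_T(\mu_h)|^2\lesssim h_T\sum_{F\in\mathcal F_T}\norm{\mu_h}_F^2=\norm{\mu_h}_{h,\partial T}^2$. The same type of scaling, applied directly to the definitions of the two mesh norms, also gives $|\mu_h|_h^2\lesssim h^{-2}\norm{\mu_h}_h^2$.

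For the upper bound I would combine $\norm{v_h-m_T(\mu_h)}_T\le\norm{v_h}_T+\norm{m_T(\mu_h)}_T$ with the mean estimate and $\sum_{T\in\mathcal T_h}\norm{\mu_h}_{h,\partial T}^2=\norm{\mu_h}_h^2$ to obtain, from the working equivalence,
$$a_h((v_h,\mu_h),(v_h,\mu_h))\lesssim h^{-2}\big(\norm{v_h}^2+\norm{\mu_h}_h^2\big)+h^{-2}\norm{\mu_h}_h^2\lesssim h^{-2}\norm{(v_h,\mu_h)}_h^2 .$$
For the lower bound, the working equivalence immediately gives $a_h((v_h,\mu_h),(v_h,\mu_h))\gtrsim|\mu_h|_h^2$, which by Lemma~\ref{ref_lem_basic_inequality} dominates $\norm{\mu_h}_h^2$; and since \eqref{basic_d} yields $\norm{v_h-m_T(\mu_h)}_T\lesssim h_T\norm{\bmnabla_w(v_h,\mu_h)}_T$, the triangle inequality together with the mean estimate gives $\norm{v_h}_T\lesssim h_T\norm{\bmnabla_w(v_h,\mu_h)}_T+\norm{\mu_h}_{h,\partial T}$, so squaring, summing over $T$, and using $h_T\lesssim1$ gives $\norm{v_h}^2\lesssim a_h((v_h,\mu_h),(v_h,\mu_h))+\norm{\mu_h}_h^2\lesssim a_h((v_h,\mu_h),(v_h,\mu_h))$. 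Adding the two estimates closes the lower bound.

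The step I expect to be the genuine obstacle is the estimate $\norm{\mu_h}_h\lesssim|\mu_h|_h$, i.e. controlling the full mesh norm of $\mu_h$ by its seminorm: the local quantity $|\mu_h|_{h,\partial T}$ sees only the deviation of $\mu_h$ from its element means $m_T(\mu_h)$, so one must propagate the matching of those means across shared faces throughout the mesh and exploit the homogeneous boundary condition $\mu_h|_{\partial\Omega}=0$. This is precisely the content of Lemma~\ref{ref_lem_basic_inequality}, which I would invoke as a black box; everything else in the proof reduces to local scaling, the triangle inequality, and quasi-uniformity.
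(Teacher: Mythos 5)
Your proof is correct and follows essentially the same route as the paper: the local equivalence \eqref{basic_d}, the mean-value bound $\norm{m_T(\mu_h)}_T\lesssim\norm{\mu_h}_{h,\partial T}$, and Lemma~\ref{ref_lem_basic_inequality} as the key ingredient for the lower bound are exactly the paper's steps. The only (harmless) cosmetic difference is in the upper bound, where the paper invokes the separate estimates \eqref{basic_b} and \eqref{basic_c} for $\bmnabla_w^i$ and $\bmnabla_w^b$ instead of your combination of \eqref{basic_d} with the scaling $\vertiii{\mu_h}_h^2\lesssim h^{-2}\norm{\mu_h}_h^2$.
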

\begin{proof}
    From \eqref{basic_d} it follows
    \begin{equation}\label{lxy}
        \norm{\bmnabla_w(v_h,\mu_h)}^2 \sim \sum_{T\in\mathcal T_h}h_T^{-2}\norm{v_h-m_T(\mu_h)}_T^2 + \vertiii{\mu_h}_h^2.
    \end{equation}
    Since
    \begin{displaymath}
        |m_T(\mu_h)| 
        \leqslant \frac{1}{d+1}\sum_{F\in\mathcal F_T}|\frac{1}{|F|}\int_F\mu_h| 
        \lesssim h_T^{-\frac{d-1}{2}}\norm{\mu_h}_{\partial T},
    \end{displaymath}
    we have
    \begin{equation}\label{estimate_m_T}
        \norm{m_T(\mu_h)}_T 
        \lesssim h_T^{\frac{1}{2}}\norm{\mu_h}_{\partial T}
        \lesssim \norm{\mu_h}_{h, \partial T},
    \end{equation}
    which, together with Lemmas \ref{ref_lem_basic_inequality}-\ref{lem_basic_inequalities}, implies
    \begin{eqnarray}\label{thm_conditon_1}
        \norm{v_h}^2 
        &\lesssim& \sum_{T\in\mathcal T_h}\left\{\norm{v_h-m_T(\mu_h)}_T^2+\norm{m_T(\mu_h)}_T^2\right\}\nonumber\\
        &\lesssim& \sum_{T\in\mathcal T_h}\norm{v_h-m_T(\mu_h)}_T^2+\norm{\mu_h}_h^2\nonumber\\
        &\lesssim& \sum_{T\in\mathcal T_h}\norm{v_h-m_T(\mu_h)}_T^2+|{\mu_h}|_h^2.
    \end{eqnarray}
    A combination of  \eqref{basic_a}, \eqref{lxy} and \eqref{thm_conditon_1} yields
    \begin{equation}\label{left}
        \|(v_h,\mu_h)\|_h^2=\|v_h\|^2+\|\mu_h\|_h^2\lesssim a_h((v_h, \mu_h), (v_h, \mu_h)). 
    \end{equation}
    On the other hand,  it holds
    \begin{eqnarray}\label{right}
        a_h((v_h,\mu_h),(v_h,\mu_h)) 
        &\lesssim& \norm{\bmnabla_w v_h}^2 + \norm{\bmnabla_w\mu_h}^2\nonumber\\
        &\lesssim& h^{-2}\norm{v_h}^2+h^{-2}\norm{\mu_h}_h^2
        ~~~\text{by \eqref{basic_b} and \eqref{basic_c}}\nonumber\\
        &\lesssim& h^{-2}\norm{(v_h,\mu_h)}_h^2.
    \end{eqnarray}
    The estimates (\ref{left})-(\ref{right}) lead to the desired result (\ref{left-right}).
\end{proof}

\begin{thm}\label{bound2}
    It holds
    \begin{equation}\label{eq_low}
        \sup_{(v_h, \mu_h)\in V_h\times M_h^0}\frac{a_h((v_h,\mu_h),(v_h,\mu_h))}{\|(v_h,\mu_h)\|_h^2}\gtrsim h^{-2}.
    \end{equation}
   In addition,   
    \begin{equation}\label{eq_up}
        \inf_{(v_h,\mu_h)\in V_h\times M_h^0}\frac{a_h(v_h,\mu_h),(v_h,\mu_h))}{\|(v_h,\mu_h)\|_h^2}\lesssim 1
    \end{equation}
    holds if $h$ is sufficiently small.
\end{thm}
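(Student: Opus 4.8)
The plan is to establish \eqref{eq_low} and \eqref{eq_up} by exhibiting explicit (families of) functions $(v_h,\mu_h)$ that nearly attain the extreme values of the Rayleigh quotient already bracketed in Theorem \ref{bound1}. For the upper bound \eqref{eq_low} on the largest eigenvalue, the point is that the right inequality in \eqref{left-right} must be essentially sharp: I would pick a highly oscillatory discrete function, the WG analogue of the maximal-frequency Fourier mode, for which $\|\bmnabla^i_w v_h\|$ is genuinely of order $h^{-1}\|v_h\|$ rather than smaller. Concretely, take $\mu_h = 0$ and choose $v_h\in V_h$ supported on a single element $T$ (or alternating sign across adjacent elements) equal to a fixed nonconstant polynomial of the appropriate degree; by scaling, $\norm{\bmnabla^i_w v_h}_T \sim h_T^{-1}\norm{v_h}_T$ with the hidden constant now bounded below, so that $a_h((v_h,0),(v_h,0)) = (\bm a\bmnabla^i_w v_h,\bmnabla^i_w v_h)_\Omega \gtrsim h^{-2}\norm{v_h}^2 = h^{-2}\norm{(v_h,0)}_h^2$. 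This requires only the lower half of the equivalence \eqref{basic_b} from Lemma \ref{lem_basic_inequalities}, plus uniform ellipticity of $\bm a$; no regularity is needed and $h$ need not be small.

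For the lower bound \eqref{eq_up} on the smallest eigenvalue I would instead use a smooth test function to show that the left inequality in \eqref{left-right} cannot be improved beyond $O(1)$. The natural candidate is the pair $(v_h,\mu_h) = (Q_h u, q_h u)$ obtained by $L^2$-projecting a fixed smooth function $u$ (say the first Dirichlet eigenfunction of $-\mathrm{div}(\bm a\bmnabla\cdot)$, or any fixed $u\in H^2_0(\Omega)$) onto $V_h$ and $M_h^0$. The key step is the consistency/approximation property of the discrete weak gradient: on each $T$, $\bmnabla_w(Q_h u, q_h u)$ approximates $\bmnabla u$, so that $\norm{\bmnabla_w(v_h,\mu_h)}_\Omega^2 \to \norm{\bmnabla u}_\Omega^2$, a fixed finite number, while $\norm{(v_h,\mu_h)}_h^2 \to \norm{u}_\Omega^2$, also fixed and nonzero. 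Hence $a_h((v_h,\mu_h),(v_h,\mu_h))/\norm{(v_h,\mu_h)}_h^2$ stays bounded (indeed converges), which is exactly \eqref{eq_up} for $h$ small. The smallness of $h$ enters precisely to guarantee that the projected data is nonzero and that the approximation of $\bmnabla u$ by $\bmnabla_w(Q_h u,q_h u)$ is within a fixed tolerance.

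The main obstacle I anticipate is the second step: one must verify that $\bmnabla_w$ applied to the projections of a smooth function really does converge to $\bmnabla u$ in $L^2$, uniformly in the sense needed. This is a standard-looking but slightly delicate commuting-diagram estimate for the WG/RT/BDM pair, and it must be carried out for both Type 1 and Type 2 elements; the boundary term $\bmnabla^b_w(q_h u)$ requires controlling the trace mismatch $\langle q_h u - u, \bm q\cdot\bm n\rangle_{\partial T}$, which is where the definition of the $h$-weighted inner product $\bint{\cdot,\cdot}{h}$ and a trace/scaling argument come in. An alternative, if one wishes to avoid invoking convergence, is to use the element-local estimate \eqref{basic_d}: choosing $v_h = m_T(\mu_h)$ on each $T$ kills the first term, leaving $\norm{\bmnabla_w(v_h,\mu_h)}_T \sim \vertiii{\mu_h}_{h,\partial T}$, and then selecting $\mu_h$ to be a globally continuous, slowly varying piecewise polynomial makes $\vertiii{\mu_h}_h^2 = |\mu_h|_h^2$ small relative to $\norm{\mu_h}_h^2$ — this is the $\mu$-side analogue of a smooth mode and again yields a bounded Rayleigh quotient. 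Either route reduces \eqref{eq_up} to Lemma \ref{lem_basic_inequalities} together with an explicit construction, and \eqref{eq_low} is immediate from Lemma \ref{lem_basic_inequalities}; combined with Theorem \ref{bound1} this pins the condition number of $\mathcal A_h$ at $\Theta(h^{-2})$.
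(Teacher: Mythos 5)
Your treatment of \eqref{eq_low} is exactly the paper's: take $\mu_h=0$ and use \eqref{basic_b}. In fact you work harder than necessary --- since \eqref{basic_b} is a two-sided equivalence valid for \emph{every} $v\in V(T)$, one has $a_h((v_h,0),(v_h,0))\sim h^{-2}\norm{v_h}^2$ for an arbitrary $v_h\in V_h$ under quasi-uniformity, so no oscillatory or single-element construction is needed.

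For \eqref{eq_up} you offer two routes, and it is your \emph{alternative} route that coincides with the paper's proof: the paper sets $\mu_h|_F=\frac{1}{|F|}\int_F u_0$ and $v_h|_T=m_T(u_0)=m_T(\mu_h)$ for $u_0$ the first Dirichlet eigenfunction, so that \eqref{basic_d} reduces the numerator to $\vertiii{\mu_h}_h^2\lesssim |u_0|_{1,\Omega}^2\sim s\norm{u_0}^2$, while the denominator is bounded below by $(1-Csh^2)\norm{u_0}^2$; this last nondegeneracy estimate is precisely where ``$h$ sufficiently small'' enters, and it is the one step your sketch leaves implicit (the paper verifies it via $\norm{u_0-m_T(u_0)}_T\lesssim h_T|u_0|_{1,T}$ and the analogous face estimate). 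Your \emph{primary} route --- $L^2$-projecting a smooth $u$ onto $V_h\times M_h^0$ and invoking a commuting-diagram/consistency property $\bmnabla_w(Q_hu,q_hu)\to\bmnabla u$ --- is a genuinely different and workable argument (it is the standard approximation property of the WG weak gradient), but as you yourself note its key estimate is not carried out here, so on its own it leaves a gap; the paper's construction is more elementary in that it needs only the purely local, regularity-free Lemma \ref{lem_basic_inequalities} plus scaling, at the cost of introducing the eigenfunction $u_0$ to make all constants explicit. Since your fallback is exactly the paper's argument and the missing details are routine scaling estimates, I would count the proposal as essentially correct, with the caveat that the projection route should not be presented as the main proof unless the consistency estimate is actually proved.
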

\begin{proof}
    Given $v_h\in V_h$, from Lemma \ref{lem_basic_inequalities} it follows
    \begin{equation}
        a_h((v_h, 0), (v_h, 0)) \sim h^{-2}\norm{v_h}^2,
    \end{equation}
    which implies \eqref{eq_low}.\par
    Let $s$ be the smallest eigenvalue of problem \eqref{eq_problem} with $f=su$ and let 
    $u_0\in H^1_0(\Omega)$ be the corresponding eigenvector function. Then it holds 
    \begin{equation}\label{eigenvec}
    \norm{\bmnabla u_0}^2\sim s\norm{u_0}^2.
    \end{equation}
    
    In the analysis below, we shall denote by $C$
      a positive constant that is independent of the mesh size $h$ and  may take
    a different value at its each occurrence.\par 
    
    We define $(v_h,\mu_h)\in V_h\times M_h^0$ by
    \begin{displaymath}
        \begin{array}{rcll}
            v_h|_T &=& m_T(u_0), &\forall T\in\mathcal T_h,\\
            \mu_h|_F &=&\frac{1}{|F|}\int_Fu_0,&\forall F\in\mathcal F_h.
        \end{array}
    \end{displaymath}
   By the definition of $m_T(\cdot)$ it is easy to see 
   \begin{equation}\label{331}
   m_T(\mu_h) = \frac{1}{d+1}\sum\limits_{F\in\mathcal F_T}\frac{1}{|F|}\int_F\mu_h=m_T(u_0).
   \end{equation}
    Standard scaling arguments yield
    \begin{eqnarray}
        \norm{u_0-m_T(u_0)}_T &\lesssim& h_T|u_0|_{1,T},\label{11}\\
        \norm{\mu_h-m_T(\mu_h)}_{\partial T}&\lesssim& h_T^{\frac{1}{2}}|u_0|_{1,T}.\label{22}
    \end{eqnarray}
    Thus, in view of \eqref{11} and \eqref{eigenvec} we have 
    \begin{equation}\label{33}
        \begin{split}
            \norm{v_h}^2
            &=\sum_{T\in\mathcal T_h}\norm{m_T(u_0)}^2_T\geqslant\sum_{T\in\mathcal T_h}\left\{\frac{1}{2}\norm{u_0}^2_T - 
                \norm{u_0-m_T(u_0)}^2_T\right\}\\
            &\gtrsim\sum_{T\in\mathcal T_h}\left\{\norm{u_0}^2_T-Ch_T^2|u_0|^2_{1,T}\right\}\\
            &\gtrsim(1-sCh^2)\norm{u_0}^2,
        \end{split}
    \end{equation}
   which, together with \eqref{22} and \eqref{331},  further implies
    \begin{equation}\label{44}
        \begin{split}
            \norm{\mu_h}^2_h
            &\geqslant\sum_{T\in\mathcal T_h}h_T\left(\frac{1}{2}\norm{m_T(\mu_h)}^2_{\partial T} -
                \norm{\mu_h-m_T(\mu_h)}^2_{\partial T}\right)\\
            &\gtrsim\sum_{T\in\mathcal T_h}h_T\norm{m_T(\mu_h)}^2_{\partial T}-Ch^2|u_0|^2_{1,\Omega}\\
            &\gtrsim\sum_{T\in\mathcal T_h}\norm{m_T(u_0)}^2_T - Ch^2|u_0|^2_{1,\Omega}
         \\
            &\gtrsim (1-sCh^2)\norm{u_0}^2.
        \end{split}
    \end{equation}
   On the other hand,  from the definition \eqref{semi-norm} and the estimate \eqref{22} it follows
    \begin{equation}\label{55}
        |\mu_h|_h \lesssim |u_0|_{1,\Omega}.
    \end{equation}
    Therefore,	it holds
    \begin{displaymath}
        \begin{split}
            \frac{a_h((v_h,\mu_h),(v_h,\mu_h))}{\norm{(v_h,\mu_h)}^2_h}
            &\sim\frac{\norm{\bmnabla_w(v_h,\mu_h)}^2}{\norm{(v_h,\mu_h)}_h^2}\\
            &\sim\frac{|\mu_h|^2_h}{\norm{v_h}^2+\norm{\mu_h}^2_h}~~~~~~~~~~~~~~~~~~~~~\text{(by \eqref{basic_d})}\\ 
            &\lesssim\frac{|\mu_h|^2_h}{(1-sCh^2)\norm{u_0}^2}~~~~~~~~~~~~~~~~~~\text{(by \eqref{33} and \eqref{44})}\\
            &\lesssim\frac{s\norm{u_0}^2}{(1-sCh^2)\norm{u_0}^2}~~~~~~~~~~~~~~~~~~\text{(by \eqref{55})}\\
            &\lesssim\frac{s}{1-sCh^2},
        \end{split}
    \end{displaymath}
    which indicates the inequality \eqref{eq_up}  immediately.   
\end{proof}

In light of Theorems \ref{bound1}- \ref{bound2}, it's straightforward to 
derive the following theorem.
\begin{thm}\label{thm_condition_num}
   Let $\mathcal A_h$ be the operator defined by \eqref{def_A}, then it holds
    \begin{equation}
        \kappa(\mathcal A_h) \lesssim h^{-2},
    \end{equation}
    where    $\kappa(\mathcal A_h):=\frac{\lam_{max}(\mathcal A_h)}{\lam_{min}(\mathcal A_h)}$, with    $\lam_{max}(\mathcal A_h)$, $\lam_{min}(\mathcal A_h)$ denoting the largest and smallest eigenvalues
    of $\mathcal A_h$ respectively.
    Further more, it holds
    \begin{equation}\label{cond-Ah}
        \kappa(\mathcal A_h)= O(h^{-2})
    \end{equation}
 if $h$ is sufficiently small.  
\end{thm}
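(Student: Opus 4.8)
The plan is to combine the two-sided spectral bounds that have already been assembled. Theorem \ref{bound1} shows that the Rayleigh quotient $a_h((v_h,\mu_h),(v_h,\mu_h))/\|(v_h,\mu_h)\|_h^2$ is bounded below by a constant $c_1>0$ and above by $c_2 h^{-2}$, both independent of $h$. Since $\mathcal A_h$ is, by definition \eqref{def_A}, the operator associated to the symmetric positive-definite bilinear form $a_h(\cdot,\cdot)$ with respect to the inner product $(\cdot,\cdot)_h$, the Courant--Fischer min--max principle identifies $\lam_{\min}(\mathcal A_h)$ and $\lam_{\max}(\mathcal A_h)$ with the infimum and supremum of exactly this Rayleigh quotient. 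Therefore $\lam_{\min}(\mathcal A_h)\gtrsim 1$ and $\lam_{\max}(\mathcal A_h)\lesssim h^{-2}$, and dividing gives $\kappa(\mathcal A_h)=\lam_{\max}(\mathcal A_h)/\lam_{\min}(\mathcal A_h)\lesssim h^{-2}$, which is the first assertion.

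For the sharp statement \eqref{cond-Ah}, I would invoke Theorem \ref{bound2} to supply matching bounds in the opposite direction. Inequality \eqref{eq_low} produces a test pair (of the form $(v_h,0)$) whose Rayleigh quotient is $\gtrsim h^{-2}$, so $\lam_{\max}(\mathcal A_h)\gtrsim h^{-2}$; combined with the upper bound from Theorem \ref{bound1} this pins down $\lam_{\max}(\mathcal A_h)\sim h^{-2}$. Similarly, inequality \eqref{eq_up}, valid once $h$ is small enough, exhibits the test pair built from the first eigenfunction $u_0$ of the continuous problem and shows the Rayleigh quotient can be made $\lesssim 1$, hence $\lam_{\min}(\mathcal A_h)\lesssim 1$; together with the lower bound $\lam_{\min}(\mathcal A_h)\gtrsim 1$ from Theorem \ref{bound1} this gives $\lam_{\min}(\mathcal A_h)\sim 1$. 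Dividing the two two-sided relations yields $\kappa(\mathcal A_h)\sim h^{-2}$, i.e. $\kappa(\mathcal A_h)=O(h^{-2})$ and bounded below by a constant times $h^{-2}$, for $h$ sufficiently small.

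The argument is essentially bookkeeping once the two preceding theorems are in hand; the only point requiring a little care is the translation between the operator $\mathcal A_h$ and the bilinear form. I would note explicitly that $\mathcal A_h$ is self-adjoint and positive definite with respect to $(\cdot,\cdot)_h$ — self-adjointness because $\bm a$ is symmetric so $a_h$ is a symmetric form, positive definiteness from the lower bound in \eqref{left-right} — so that its eigenvalues are real, positive, and characterized variationally by the Rayleigh quotient in the $\|\cdot\|_h$-norm. Everything else then follows by reading off constants. The main (mild) obstacle is simply to make sure the Rayleigh quotient used throughout is the one relative to $\|\cdot\|_h$ and not the Euclidean norm of the coefficient vectors, since $\kappa(\mathcal A_h)$ in the statement refers to the operator on $(V_h\times M_h^0,(\cdot,\cdot)_h)$; the matrix $A_h$ would carry extra mesh-dependent factors from the mass matrices of the nodal bases, which is why the result is phrased in terms of $\mathcal A_h$.
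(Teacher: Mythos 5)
Your proposal is correct and matches the paper's intent exactly: the paper gives no written proof, stating only that the theorem is ``straightforward'' from Theorems \ref{bound1} and \ref{bound2}, and your argument---identifying the extreme eigenvalues of the self-adjoint positive-definite operator $\mathcal A_h$ with the infimum and supremum of the Rayleigh quotient in the $(\cdot,\cdot)_h$ inner product, then reading off $\lam_{\min}(\mathcal A_h)\sim 1$ and $\lam_{\max}(\mathcal A_h)\sim h^{-2}$ from the two preceding theorems---is precisely the bookkeeping the authors had in mind. Your closing remark about working with $\mathcal A_h$ rather than the matrix $A_h$ is also consistent with the paper's own Remark following the theorem.
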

\begin{rem}
    Let $A_h$ be  the stiffness matrix of $a_h(\cdot,\cdot)$  defined by \eqref{def_A}, then we easily have $\kappa(A_h)\sim \kappa(\mathcal A_h) = O(h^{-2})$.
\end{rem}

\section{Two-level algorithm}\label{sec_multigrid}

In this section, we analyze a two-level algorithm for the discrete system \eqref{operator system}. For the sake of clarity,
our description is in operator form. 
\newcommand{\we}{\widetilde}
\subsection{Algorithm definition} 
Set
\begin{equation}\label{tilde-Wh0}
    \we V_h := \{\we v_h\in H^1_0(\Omega):\we v_h|_T\in P_1(T),~\forall T\in\mathcal T_h\}.
\end{equation}
We first define the prolongation operator $I_h:\we V_h\to V_h\times M_h^0$ as follows:
for any $\we v_h\in\we V_h$, $I_h\we v_h: = (I_h^i\we v_h,I_h^b\we v_h)\in V_h\times M_h^0$
satisfies
\begin{displaymath}
    \left\{
        \begin{array}{rclll}
            \int_TI_h^i\we v_hv &=& \int_T\we v_hv,&\forall v\in V(T),&\forall T\in\mathcal T_h,\\
            \int_FI_h^b\we v_h\mu &=& \int_F\we v_h\mu,&\forall\mu\in M(F),&\forall F\in\mathcal F_h.
        \end{array}
    \right.
\end{displaymath}
Then define the adjoint operator, $I_h^t$, of $I_h$ by
\begin{displaymath}
    (I_h^t(v_h,\mu_h),\we v_h)_{\Omega} := ((v_h,\mu_h),I_h\we v_h)_h,
    ~\forall (v_h,\mu_h)\in V_h\times M_h^0,\forall\we v_h\in\we V_h.
\end{displaymath}
\par 
Define $\we{\mathcal A_h}:\we V_h\to\we V_h$ by
\begin{equation}\label{eq_def_we_A_h}
    (\we{\mathcal A_h}\we u_h,\we v_h)_{\Omega} := (\bm a\bmnabla\we u_h,\bmnabla\we v_h)_{\Omega},
    ~\forall\we u_h,\we v_h\in\we V_h.
\end{equation}
\begin{rem}\label{rem_I_h}
    By the definition of $I_h$, it's trivial to verify that $\bmnabla_wI_h\we v_h = \bmnabla\we v_h,
    ~\forall\we v_h\in\we V_h$. Thus we have the following important relationship:
    \begin{equation}\label{A-WeA}
        \we{\mathcal A_h} = I_h^t\mathcal A_hI_h.
    \end{equation}
\end{rem}

Let $\we{\mathcal R_h}:\we V_h\to\we V_h$ be a good approximation of $\we{\mathcal A_h}^{-1}$ and 
define 
$\we{\mathcal R_h}^t$ by
\begin{displaymath}
    (\we{\mathcal R_h}^t\we u_h,\we v_h)_{\Omega}: = (\we u_h,\we{\mathcal R_h}\we v_h)_{\Omega},
    ~\forall\we u_h,\we v_h\in\we V_h.
\end{displaymath}
\par
Let $\mathcal R_h:V_h\times M_h^0\to V_h\times M_h^0$ be a good approximation of $\mathcal A_h^{-1}$.
and let  $\mathcal R_h^t:V_h\times M_h^0\to V_h\times M_h^0$ be defined by
\begin{displaymath}
    (\mathcal R_h^t(u_h,\lam_h),(v_h,\mu_h))_h: = ((u_h,\lam_h),\mathcal R_h(v_h,\mu_h))_h,
    ~\forall (u_h,\lam_h),(v_h,\mu_h)\in V_h\times M_h^0.
\end{displaymath}
\par 
Using the above operators, we define an ingredient operator $\mathcal B_h:V_h\times M_h^0\to V_h\times M_h^0$ as follows:
\begin{algorithm}\label{algorithm 1} 
    For any $b_h\in V_h\times M_h^0$, define $\mathcal B_hb_h = (v_h^4,\mu_h^4)$ by \\
    $~~~~1$. Smooth: $(v_h^1,\mu_h^1):=\mathcal R_hb_h$,\\
    $~~~~2$. Correct: $(v_h^2,\mu_h^2):=(v_h^1,\mu_h^1)+I_h\we{\mathcal R_h}I_h^t(b_h-\mathcal A_h(v_h^1,\mu_h^1))$,\\
    $~~~~3$. Correct: $(v_h^3,\mu_h^3):=(v_h^2,\mu_h^2)+I_h\we{\mathcal R_h}^tI_h^t(b_h-\mathcal A_h(v_h^2,\mu_h^2))$,\\
    $~~~~4$. Smooth: $(v_h^4,\mu_h^4) := (v_h^3,\mu_h^3) + \mathcal R_h^t(b_h-\mathcal A_h(v_h^3,\mu_h^3))$.
\end{algorithm}
\par

We are now in a position to present the two-level algorithm
for the  system \eqref{operator system}.

\begin{algorithm}\label{algorithm2}
    Set $(u_h^0,\lam_h^0)=(0,0)$, \\
    $~~~~$for $j=1,2,\ldots$ till convergence\\
    $~~~~~~~~(u_h^j,\lam_h^j) := (u_h^{j-1},\lam_h^{j-1})+\mathcal B_h(b_h-\mathcal A_h(u_h^{j-1},\lam_h^{j-1}))$.\\
    $~~~~$end
\end{algorithm}
\subsection{Convergence analysis}
At first, we introduce some abstract notations. Let $X$ be a finite dimensional Hilbert space 
with inner product $(\cdot,\cdot)$ and its induced norm $\norm{\cdot}$. For any linear SPD operator $A:X\to X$,  the notation $(\cdot,\cdot)_A:=(A\cdot,\cdot)$ defines an inner product on 
$X$ and we denote by $\norm{\cdot}_A$  the norm induced by $(\cdot,\cdot)_A$. Let $B:X\to X$ 
be a linear operator with
\begin{displaymath}
    \norm{B}_A:=\sup_{0\ne x\in X}\frac{\norm{Bx}_A}{\norm{x}_A}.
\end{displaymath}
\par
From the definition of $\mathcal B_h$ in {\bf Algorithm \ref{algorithm 1}}, we easily obtain the following lemma.
\begin{lem}
   It holds
    \begin{equation}
        I-\mathcal B_h\mathcal A_h 
        = (I-\mathcal R_h^t\mathcal A_h)(I-I_h\widetilde{\mathcal R_h}^tI_h^t\mathcal A_h)
        (I-I_h\widetilde{\mathcal R_h}I_h^t\mathcal A_h)(I-\mathcal R_h\mathcal A_h).
    \end{equation}
\end{lem}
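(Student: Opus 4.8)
The statement to prove is a purely algebraic identity for the error propagation operator of the multiplicative iteration defined in Algorithm 1. The plan is to unwind the four steps of Algorithm \ref{algorithm 1} one at a time, tracking the error $(v_h^k,\mu_h^k) - \mathcal A_h^{-1}b_h$, or equivalently expressing each $(v_h^k,\mu_h^k)$ in terms of $b_h$ and then reading off $\mathcal B_h$ from the relation $\mathcal B_h b_h = (v_h^4,\mu_h^4)$.

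First I would observe that each step has the form of a stationary correction: given an iterate $(v_h^{k-1},\mu_h^{k-1})$ with residual $r^{k-1} := b_h - \mathcal A_h(v_h^{k-1},\mu_h^{k-1})$, the next iterate is $(v_h^k,\mu_h^k) = (v_h^{k-1},\mu_h^{k-1}) + \mathcal S_k r^{k-1}$, where $\mathcal S_1 = \mathcal R_h$ (with the convention $(v_h^0,\mu_h^0)=0$), $\mathcal S_2 = I_h\widetilde{\mathcal R_h}I_h^t$, $\mathcal S_3 = I_h\widetilde{\mathcal R_h}^tI_h^t$, and $\mathcal S_4 = \mathcal R_h^t$. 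For any such step, the exact solution $u^* := \mathcal A_h^{-1}b_h$ satisfies $(v_h^k,\mu_h^k) - u^* = (I - \mathcal S_k\mathcal A_h)\big((v_h^{k-1},\mu_h^{k-1}) - u^*\big)$, since $r^{k-1} = \mathcal A_h(u^* - (v_h^{k-1},\mu_h^{k-1}))$. This is the one-line lemma that makes everything composable.

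Next I would compose the four steps. Starting from $(v_h^0,\mu_h^0) - u^* = -u^*$, iterating the relation above four times gives
\begin{displaymath}
    (v_h^4,\mu_h^4) - u^* = (I-\mathcal R_h^t\mathcal A_h)(I-I_h\widetilde{\mathcal R_h}^tI_h^t\mathcal A_h)(I-I_h\widetilde{\mathcal R_h}I_h^t\mathcal A_h)(I-\mathcal R_h\mathcal A_h)(-u^*).
\end{displaymath}
Since $\mathcal B_h b_h = (v_h^4,\mu_h^4)$ and this must hold for all $b_h$ (hence for all $u^* = \mathcal A_h^{-1}b_h$, which ranges over all of $V_h\times M_h^0$ as $\mathcal A_h$ is invertible on that space by Theorem \ref{thm_condition_num}), we get $\mathcal B_h b_h - \mathcal A_h^{-1}b_h = -(I-\mathcal R_h^t\mathcal A_h)\cdots(I-\mathcal R_h\mathcal A_h)\mathcal A_h^{-1}b_h$, i.e.
\begin{displaymath}
    \mathcal B_h = \mathcal A_h^{-1} - (I-\mathcal R_h^t\mathcal A_h)(I-I_h\widetilde{\mathcal R_h}^tI_h^t\mathcal A_h)(I-I_h\widetilde{\mathcal R_h}I_h^t\mathcal A_h)(I-\mathcal R_h\mathcal A_h)\mathcal A_h^{-1}.
\end{displaymath}
Right-multiplying by $\mathcal A_h$ and rearranging yields exactly $I - \mathcal B_h\mathcal A_h = (I-\mathcal R_h^t\mathcal A_h)(I-I_h\widetilde{\mathcal R_h}^tI_h^t\mathcal A_h)(I-I_h\widetilde{\mathcal R_h}I_h^t\mathcal A_h)(I-\mathcal R_h\mathcal A_h)$, which is the claim.

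There is essentially no hard part here; the only thing requiring a moment of care is the bookkeeping of which correction operator goes with which factor and, in particular, making sure the order of the factors comes out right — the first step applied (the $\mathcal R_h$ smoothing) produces the rightmost factor $(I-\mathcal R_h\mathcal A_h)$, and the last step applied (the $\mathcal R_h^t$ smoothing) produces the leftmost factor, because composition of the error-propagation maps stacks them in reverse order of application. I would also note in passing that one does not even need $\widetilde{\mathcal R_h}$ or $\mathcal R_h$ to be invertible or SPD for this identity; it is a formal consequence of the additive-correction structure alone. So the proof is just: state the one-step error identity, iterate it four times, and invoke invertibility of $\mathcal A_h$ to strip off $b_h$.
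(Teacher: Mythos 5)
Your proof is correct and is exactly the routine computation the paper has in mind (the paper simply asserts the lemma follows easily from the definition of $\mathcal B_h$ in Algorithm 1 and gives no written proof): iterate the one-step error identity $(v_h^k,\mu_h^k)-u^*=(I-\mathcal S_k\mathcal A_h)\bigl((v_h^{k-1},\mu_h^{k-1})-u^*\bigr)$ through the four correction steps and strip off $b_h$ using the invertibility of $\mathcal A_h$. The ordering of the factors and the remark that no SPD or invertibility assumptions on the smoothers are needed are both right, so nothing further is required.
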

It's trivial to verify that $I-\mathcal B_h\mathcal A_h$ is symmetric semi-positive 
definite with respect to the inner product $(\cdot,\cdot)_{\mathcal A_h}$, and thus 
it follows $\lam_{max}(\mathcal B_h\mathcal A_h)\leqslant 1$ and
\begin{equation}\label{I-BA}
    \norm{I-\mathcal B_h\mathcal A_h}_{\mathcal A_h}=1-\lam_{min}(\mathcal B_h\mathcal A_h).
\end{equation}
Now we introduce the symmetrizations of $\mathcal R_h$ and $\we{\mathcal R_h}$, i.e.
\begin{eqnarray}
    \overline{\mathcal R_h} &:=& 
    \mathcal R_h^t+\mathcal R_h-\mathcal R_h^t\mathcal A_h\mathcal R_h,\label{def_sysm_R}\\
    \overline{\we{\mathcal R_h}} &:=& 
    \we{\mathcal R_h}^t+\we{\mathcal R_h}-\we{\mathcal R_h}^t\we{\mathcal A_h}\we{\mathcal R_h}
    \label{def_sysm_we_R},
\end{eqnarray}
and make the following assumption.
\begin{assumption}\label{assum_smoother} The operators $\mathcal R_h$ and $\we{\mathcal R_h}$ are such that
    \begin{eqnarray}
        \norm{I-\overline{\mathcal R_h}\mathcal A_h}_{\mathcal A_h} &<& 1,\label{assum_overline_R}\\
        \norm{I-\overline{\we{\mathcal R_h}}\we{\mathcal A_h}}_{\we{\mathcal A_h}} &<& 1.\label{assum_overline_we_R}
    \end{eqnarray}
\end{assumption}
\begin{rem}\label{rem_smoother}
    It follows from \eqref{assum_overline_R} that $\overline{\mathcal R_h}$ is SPD with respect
    to the inner product $(\cdot,\cdot)_h$. Then it follows from
    \begin{displaymath}
        \overline{\mathcal R_h}=\mathcal R_h^t(\mathcal R_h^{-t}+\mathcal R_h^{-1}-\mathcal A_h)
        \mathcal R_h
    \end{displaymath}
    that $\mathcal R_h^{-t}+\mathcal R_h^{-1}-\mathcal A_h$ is SPD with respect to the inner product
    $(\cdot,\cdot)_h$. Similarly, $\overline{\we{\mathcal R_h}}$ and 
    $\we{\mathcal R_h}^{-t}+\we{\mathcal R_h}^{-1}-\we{\mathcal A_h}$
    are both SPD with respect to the inner product $(\cdot,\cdot)_{\Omega}$.
\end{rem}
Following the basic idea of the X-Z identity (\cite{Xu-Z2002},\cite{XZ2},\cite{Chen_XZ}), we have
the following ingredient theorem.
\begin{thm}\label{thm_X-Z}
    Under  {\bf Assumption \ref{assum_smoother}}, $\mathcal B_h$ is a SPD operator with respect to 
    the inner product $(\cdot,\cdot)_h$, and, for any $(u_h,\lam_h)\in V_h\times M_h^0$, it holds
    \begin{equation}\label{eq_XZ}
        \begin{split}
            &(\mathcal B_h^{-1}(u_h,\lam_h),(u_h,\lam_h))_h\\
            =&\inf_{\substack{(v_h,\mu_h)+I_h\we v_h=(u_h,\lam_h)\\(v_h,\mu_h)\in V_h\times M_h^0,\we v_h\in\we V_h}}
            \norm{(v_h,\mu_h) + \mathcal R_h^t\mathcal A_hI_h\we v_h}^2_{\overline{\mathcal R_h}^{-1}}
            +\norm{\we v_h}^2_{\overline{\we{\mathcal R_h}}^{-1}}.
        \end{split}
    \end{equation}
    Further more, it holds the following extended X-Z identity:
    \begin{equation}\label{eq_convergence_xz}
        \norm{I-\mathcal B_h\mathcal A_h}_{\mathcal A_h}= 1 - \frac{1}{K},
    \end{equation}
    where
    \begin{equation}\label{def_K}
        K = \sup_{\norm{(u_h,\lam_h)}_{\mathcal A_h}=1}
        \inf_{\substack{(v_h,\mu_h)+I_h\we v_h = (u_h,\lam_h)\\(v_h,\mu_h)\in V_h\times M_h^0,\we v_h\in\we V_h}}
        \norm{(v_h,\mu_h) + \mathcal R_h^t\mathcal A_hI_h\we v_h}^2_{\overline{\mathcal R_h}^{-1}}
        +\norm{\we v_h}^2_{\overline{\we{\mathcal R_h}}^{-1}}.
    \end{equation}
\end{thm}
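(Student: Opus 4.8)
The plan is to recognize \textbf{Algorithm \ref{algorithm 1}} as an instance of the abstract successive subspace correction (SSC) framework with a product iteration, and then invoke the Xu--Zikatanov identity in the generalized form that allows non-nested spaces and inexact (possibly non-symmetric) subspace solvers. Concretely, I would set up the abstract picture with the ``big'' space $X = V_h \times M_h^0$ equipped with the inner product $(\cdot,\cdot)_h$ and the operator $\mathcal A_h$, and two ``subspaces'' entering through $I_h$: the coarse space $\we V_h$ (entering via the prolongation $I_h$, with local solver $\we{\mathcal R_h}$) and the fine space $X$ itself (entering via the identity, with smoother $\mathcal R_h$). Step~4 of the algorithm is the transpose-smoothing step and step~3 is the transpose coarse correction, so the four-step sweep is exactly the symmetrized product $(I-\mathcal R_h^t\mathcal A_h)(I-I_h\we{\mathcal R_h}^t I_h^t\mathcal A_h)(I-I_h\we{\mathcal R_h}I_h^t\mathcal A_h)(I-\mathcal R_h\mathcal A_h)$ established in the preceding lemma, i.e. a forward sweep over the ordered list of subspaces followed by its backward (adjoint) sweep.

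The key steps, in order, would be: (i) verify that \textbf{Assumption \ref{assum_smoother}} guarantees the subspace solvers are ``admissible'' in the sense required by the generalized XZ identity --- i.e. $\overline{\mathcal R_h}$ and $\overline{\we{\mathcal R_h}}$ are SPD (this is already noted in Remark~\ref{rem_smoother}), which is what makes $\mathcal B_h$ SPD and allows forming the norms $\norm{\cdot}_{\overline{\mathcal R_h}^{-1}}$ and $\norm{\cdot}_{\overline{\we{\mathcal R_h}}^{-1}}$; (ii) write down the error propagation operator $I-\mathcal B_h\mathcal A_h$ in product form and confirm, as the text already does via \eqref{I-BA}, that $\norm{I-\mathcal B_h\mathcal A_h}_{\mathcal A_h} = 1-\lambda_{\min}(\mathcal B_h\mathcal A_h)$, so that the whole problem reduces to computing $\lambda_{\min}(\mathcal B_h\mathcal A_h)$, equivalently the largest value of $(\mathcal B_h^{-1}w,w)_h$ over $\norm{w}_{\mathcal A_h}=1$; (iii) prove the representation \eqref{eq_XZ} for $(\mathcal B_h^{-1}(u_h,\lambda_h),(u_h,\lambda_h))_h$ by the standard XZ argument: expand $\mathcal B_h^{-1}$ using the product structure, introduce the telescoping decomposition $(u_h,\lambda_h) = (v_h,\mu_h) + I_h\we v_h$ induced by the intermediate iterates of the sweep, and show the infimum over all such splittings is attained precisely at the algorithm's own splitting; the cross term $\mathcal R_h^t\mathcal A_h I_h\we v_h$ appearing inside the first norm comes from the fact that the fine smoother acts \emph{after} the coarse correction in the forward sweep, so the ``fine component'' must be measured together with the residual transmitted from the coarse correction; (iv) take the supremum over $\norm{(u_h,\lambda_h)}_{\mathcal A_h}=1$ to define $K$ as in \eqref{def_K} and conclude $\lambda_{\min}(\mathcal B_h\mathcal A_h) = 1/K$, hence \eqref{eq_convergence_xz}.

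For the bookkeeping it is cleanest to first prove the identity in the abstract SSC setting --- two subspaces $V_0 \hookrightarrow X$ (here $V_0 = \we V_h$ embedded by $I_h$) and $V_1 = X$ (embedded by the identity), with solvers $R_0 = \we{\mathcal R_h}$, $R_1 = \mathcal R_h$, sweep order $V_1$ then $V_0$ in the forward direction --- and then specialize. In that abstract identity the quantity $K$ has the shape $\sup_{\norm{w}_A=1}\inf_{\sum_i I_i w_i = w}\sum_i \norm{w_i + R_i^{\ast} A \sum_{j>i} I_j w_j}^2_{\overline{R_i}^{-1}}$; substituting our two subspaces with the chosen ordering, the $i=1$ (fine) term picks up the coarse contribution $R_1^{\ast}A\,I_0 w_0 = \mathcal R_h^t \mathcal A_h I_h\we v_h$ and the $i=0$ (coarse) term has no successor, giving exactly \eqref{def_K}. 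The SPD-ness of $\mathcal B_h$ with respect to $(\cdot,\cdot)_h$ then follows because $(\mathcal B_h^{-1}\cdot,\cdot)_h$ is manifestly a sum of squared norms, hence positive unless the argument is zero.

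The main obstacle I expect is step~(iii): carefully establishing the \emph{exact} algebraic identity \eqref{eq_XZ}, not merely a two-sided estimate. This requires unwinding the product $I-\mathcal B_h\mathcal A_h = \prod_i (I - T_i)$ with $T_i = I_i R_i I_i^{\ast} \mathcal A_h$ (and their adjoints), identifying $\mathcal B_h^{-1}$ via the standard ``$\mathcal B_h^{-1} = $ (sum of local inverses on a complemented decomposition)'' formula, and then proving that the resulting quadratic form equals the infimum over decompositions --- the subtlety being that with inexact solvers one must symmetrize (the role of the extra steps~3 and~4) and keep track of the $\overline{\mathcal R_h}$ versus $\mathcal R_h$ distinction, and that the non-nestedness forces all inner products and adjoints to be taken in the correct space ($(\cdot,\cdot)_h$ on $V_h\times M_h^0$ versus $(\cdot,\cdot)_\Omega$ on $\we V_h$), with $I_h^t$ the adjoint relative to \emph{that} pairing. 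Once the abstract identity is in hand, verifying Assumption~\ref{assum_smoother} is exactly the hypothesis needed, and the specialization to our $I_h$, $\mathcal R_h$, $\we{\mathcal R_h}$ is routine; the relation $\we{\mathcal A_h} = I_h^t \mathcal A_h I_h$ from Remark~\ref{rem_I_h} is what makes the coarse correction consistent and is used implicitly throughout.
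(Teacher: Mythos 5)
Your proposal is correct and lands on the same underlying machinery as the paper: both arguments reduce Theorem \ref{thm_X-Z} to the known Xu--Zikatanov identity for a symmetrized two-subspace correction with a non-nested coarse space and inexact solvers, using the product form of $I-\mathcal B_h\mathcal A_h$, the reduction via \eqref{I-BA} to $\lambda_{\min}(\mathcal B_h\mathcal A_h)$, and the relation $\widetilde{\mathcal A_h}=I_h^t\mathcal A_hI_h$. The bookkeeping differs, though, in a way worth noting. You propose to run the direct SSC telescoping argument over the ordered pair of subspaces and then specialize the abstract formula $\sum_i\|w_i+R_i^{\ast}A\sum_{j>i}I_jw_j\|^2_{\overline{R_i}^{-1}}$. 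The paper instead follows Chen's auxiliary-space derivation: it lifts everything to the product space $X_h=(V_h\times M_h^0)\times\widetilde V_h$ with the surjection $\Pi_h=(I\ \ I_h)$, packages the sweep into a block lower-triangular operator $\widetilde{\widetilde{\mathcal B_h}}$, proves the factorization $\mathcal B_h=\Pi_h\overline{\widetilde{\widetilde{\mathcal B_h}}}\Pi_h^t$, and then quotes the general quotient formula $(\mathcal B_h^{-1}v,v)_h=\inf_{\Pi_hw=v}[\overline{\widetilde{\widetilde{\mathcal B_h}}}^{-1}w,w]$; the cross term $\mathcal R_h^t\mathcal A_hI_h\widetilde v_h$ then drops out of an explicit congruence of $\mathrm{diag}(\overline{\mathcal R_h}^{-1},\overline{\widetilde{\mathcal R_h}}^{-1})$ by a unit upper-triangular block factor, with no telescoping needed. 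The paper's route buys a shorter proof at the cost of citing Chen's Theorem 1; yours is more self-contained but must carefully re-establish the exact (not merely two-sided) identity, which is precisely the step you flag as the main obstacle. One small caution: your heuristic explanation of the cross term says the fine smoother acts \emph{after} the coarse correction in the forward sweep, whereas in Algorithm \ref{algorithm 1} it acts \emph{before} it (step 1 versus step 2); the cross term sits in the fine-space norm exactly because the coarse correction is the fine smoother's successor. Your final substitution into the abstract formula is nevertheless the correct one, so this is a verbal slip rather than a gap.
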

\begin{proof} The desired results follow from a trivial modification of the proof of the X-Z identity in \cite{Chen_XZ}.  
    For completeness we sketch the proof of this theorem. 
   We note that $\we V_h\not\subset V_h\times M_h^0$ means the corresponding spaces here are nonnested.
    
    Denote $X_h:=(V_h\times M_h^0)\times\we V_h$  and define the inner product $[\cdot,\cdot]$ 
    on $X_h$ by
    \begin{displaymath}
        [(a,b),(c,d)]:=(a,c)_h+(b,d)_{\Omega},~\forall (a,b),(c,d)\in X_h.
    \end{displaymath}
    Introduce the operator  $\Pi_h:X_h\to V_h\times M_h^0$ and its adjoint operator $\Pi_h^t:V_h\times M_h^0\to X_h$ with
    \begin{displaymath}
        \begin{array}{rcll}
            \Pi_h&:=& (I~I_h), &\text{i.e. }  \Pi_h(a,b)=a+I_hb  \text{ for any } (a,b)\in X_h,\\
            \Pi_h^t&:=&\left(\begin{array}{c}I\\I_h^t\end{array}\right), &
            \text{i.e. }  \Pi_h^ta=\left(\begin{array}{c}a\\I_h^ta\end{array}\right) \text{ for any } a\in V_h\times M_h^0.
        \end{array}
    \end{displaymath}
    Obviously, we have $(\Pi_h\tilde a,b)_h=[\tilde a,\Pi^tb],~\forall \tilde a\in X_h,\forall b\in V_h\times M_h^0$.
    \par
    Now define  
    \begin{displaymath}
        \we{\we{\mathcal A_h}}
       : =\left(
            \begin{array}{cc}
                \mathcal A_h & \mathcal A_hI_h\\
                I_h^t\mathcal A_h & I_h^t\mathcal A_hI_h
            \end{array}
        \right)
        ,~\we{\we{\mathcal B_h}}
       : =\left(
            \begin{array}{cc}
                \mathcal R_h^{-1} & 0\\
                I_h^t\mathcal A_h & \we{\mathcal R_h}^{-1}
            \end{array}
        \right)^{-1},
    \end{displaymath}
    and denote by $\we{\we{\mathcal D_h}}$  the diagonal of $\we{\we{\mathcal A_h}}$.
    \par
    For any $b_h\in V_h\times M_h^0$, set 
    \begin{displaymath}
        \begin{array}{rcl}
            w_1 &:=& \we{\we{\mathcal B_h}}\Pi_h^tb_h,\\
            w_2 &:=& w_1+\we{\we{\mathcal B_h}}^t(\Pi_h^tb_h-\we{\we{\mathcal A_h}}w_1).
        \end{array}
    \end{displaymath}
    Then it holds $\Pi_hw_2 = \Pi_h\overline{\we{\we{\mathcal B_h}}}\Pi_h^tb_h$, where
    \begin{displaymath}
        \overline{\we{\we{\mathcal B_h}}}  := \we{\we{\mathcal B_h}}^t+\we{\we{\mathcal B_h}}
        -\we{\we{\mathcal B_h}}^t\we{\we{\mathcal A_h}}\we{\we{\mathcal B_h}}.
    \end{displaymath}
    It's easy to verify that $\Pi_hw_2=\mathcal B_hb_h$, which yields
    \begin{equation}\label{B-WeB}
        \mathcal B_h = \Pi_h\overline{\we{\we{\mathcal B_h}}}\Pi_h^t.
    \end{equation}
    Denoting $\we{\we{\mathcal R_h}}: = \text{diag}(\mathcal R_h,\we{\mathcal R_h})$, we have
    \begin{displaymath}
        \begin{split}
            \overline{\we{\we{\mathcal B_h}}}
            &=\we{\we{\mathcal B_h}}^t(\we{\we{\mathcal B_h}}^{-t}+\we{\we{\mathcal B_h}}^{-1}
            -\we{\we{\mathcal A_h}})\we{\we{\mathcal B_h}}\\
            &=\we{\we{\mathcal B_h}}^t(\we{\we{\mathcal R_h}}^{-t}+\we{\we{\mathcal R_h}}^{-1}-\we{\we{\mathcal D_h}})
            \we{\we{\mathcal B_h}}\\
            &=\we{\we{\mathcal B_h}}^t\we{\we{\mathcal R_h}}^{-t}
            \overline{\we{\we{\mathcal R_h}}}\we{\we{\mathcal R_h}}^{-1}
            \we{\we{\mathcal B_h}},
        \end{split}
    \end{displaymath}
    where $\overline{\we{\we{\mathcal R_h}}} = \we{\we{\mathcal R_h}}^t+\we{\we{\mathcal R_h}}
    -\we{\we{\mathcal R_h}}^t\we{\we{\mathcal D_h}}\we{\we{\mathcal R_h}}$. By \eqref{A-WeA} 
    we also have $\overline{\we{\we{\mathcal R_h}}} =diag(\overline{\mathcal R_h},
    \overline{\we{\mathcal R_h}})$.
    From Remark \ref{rem_smoother}, it follows that 
    $\overline{\we{\we{\mathcal R_h}}}$
    is SPD with respect to $[\cdot,\cdot]$.
    Thus $\overline{\we{\we{\mathcal B_h}}}$ is SPD with respect to $[\cdot,\cdot]$. Then from 
    {\bf Theorem 1} in \cite{Chen_XZ} and \eqref{B-WeB} it follows 
    \begin{equation}\label{3331}
        (\mathcal B_h^{-1}(u_h,\lam_h),(u_h,\lam_h))_h
        =\inf_{\substack{\Pi_hw_h=(u_h,\lam_h)\\w_h\in X_h}}
        [\overline{\we{\we{\mathcal B_h}}}^{-1}w_h,w_h] .
    \end{equation} 
    In view of 
    \begin{displaymath}
        \begin{split}
            \overline{\we{\we{\mathcal B_h}}}^{-1}
            =\we{\we{\mathcal B_h}}^{-1}\we{\we{\mathcal R_h}}
            \overline{\we{\we{\mathcal R_h}}}^{-1}\we{\we{\mathcal R_h}}^t\we{\we{\mathcal B_h}}^{-t}
            =\left(\begin{array}{cc}I & 0\\ I_h^t\mathcal A_h\mathcal R_h& I\end{array}\right)
            \overline{\we{\we{\mathcal R_h}}}^{-1}
            \left(\begin{array}{cc}I& \mathcal R_h^t\mathcal A_hI_h\\0 & I\end{array}\right),
        \end{split}
    \end{displaymath}
   the identity \eqref{eq_XZ} follows immediately from \eqref{3331}. The extended X-Z identity \eqref{eq_convergence_xz}  is
    just a trivial conclusion from \eqref{eq_XZ}.
\end{proof}
We define the operator $P_h:M_h^0\to\we V_h$ as follows. For any $\lam_h\in M_h^0$, $P_h\lam_h$ satisfies
\begin{displaymath}
    \left\{
        \begin{array}{rcll}
            P_h\lam_h(\bm x) &=& \sum\limits_{T\in\omega_{\bm x}}\frac{\sum\limits_{T\in\omega_{\bm x}}m_T(\lam_h)}
            {\sum\limits_{T\in\omega_{\bm x}}1},&\text{for each interior vertex $\bm x$ of $\mathcal T_h$,}\\
            P_h\lam_h(\bm x) &=& 0, &\text{for each vertex $\bm x\in\partial\Omega$,}
        \end{array}
    \right.
\end{displaymath}
where the set $\omega_{\bm x}:=\{T\in\mathcal T_h:\bm x \text{ is a vertex of $T$}\}$.
\par
As for the operator $P_h$, we have the following important estimates.
\begin{lem}
    For any $(u_h,\lam_h)\in V_h\times M_h^0$, it holds
    \begin{eqnarray}
        \norm{(I-I_h^bP_h)\lam_h}_h &\lesssim& h\norm{(u_h,\lam_h)}_{\mathcal A_h},\label{eq_esti_I_b}\\
        \norm{u_h-I_h^iP_h\lam_h} &\lesssim& h\norm{(u_h,\lam_h)}_{\mathcal A_h},\label{eq_esti_I_i}
    \end{eqnarray}
   which further indicate
    \begin{equation}\label{eq_esti_I_h}
        \norm{(u_h,\lam_h)-I_hP_h\lam_h}_h\lesssim h\norm{(u_h,\lam_h)}_{\mathcal A_h}.
    \end{equation}
\end{lem}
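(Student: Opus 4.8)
The plan is to reduce everything to two ingredients already at hand. First, combining \eqref{basic_d} (equivalently \eqref{lxy} from the proof of Theorem \ref{bound1}) with the uniform spectral bounds on $\bm a$ gives
\begin{equation*}
  \norm{(u_h,\lam_h)}_{\mathcal A_h}^2 \sim \sum_{T\in\mathcal T_h}h_T^{-2}\norm{u_h-m_T(\lam_h)}_T^2 + |\lam_h|_h^2 ,
\end{equation*}
so in particular $\sum_T h_T^{-2}\norm{u_h-m_T(\lam_h)}_T^2\lesssim\norm{(u_h,\lam_h)}_{\mathcal A_h}^2$ and $|\lam_h|_h^2\lesssim\norm{(u_h,\lam_h)}_{\mathcal A_h}^2$. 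Second, since $\mathcal T_h$ is quasi-uniform we have $h_T\sim h$, $|F|\sim h^{d-1}$, $|T|\sim h^d$, and every vertex patch $\omega_{\bm x}$ (hence every element patch $\bigcup\{\omega_{\bm x}:\bm x\text{ a vertex of }T\}$) has cardinality bounded independently of $h$ and is connected through $(d-1)$-faces containing $\bm x$. I also record three trivial reductions: $P_h\lam_h|_F\in P_1(F)\subseteq M(F)$, so $I_h^bP_h\lam_h|_F=P_h\lam_h|_F$; constants lie in every $V(T)$, so $I_h^iP_h\lam_h|_T=\Pi_T P_h\lam_h$ with $\Pi_T$ the $L^2(T)$-projection onto $V(T)$, a contraction fixing the constant $m_T(\lam_h)$; and $\norm{(u_h,\lam_h)-I_hP_h\lam_h}_h^2=\norm{u_h-I_h^iP_h\lam_h}^2+\norm{\lam_h-I_h^bP_h\lam_h}_h^2$, so \eqref{eq_esti_I_h} follows from \eqref{eq_esti_I_b} and \eqref{eq_esti_I_i}.

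The analytic heart is a single jump estimate, in the spirit of Oswald/Scott--Zhang averaging. If $T_1,T_2$ share a face $F$ and $c_F:=\frac1{|F|}\int_F\lam_h$, then inserting $c_F$ and using $|F|\,|m_{T_i}(\lam_h)-c_F|^2\le\norm{\lam_h-m_{T_i}(\lam_h)}_F^2\le\norm{\lam_h-m_{T_i}(\lam_h)}_{\partial T_i}^2=h_{T_i}|\lam_h|_{h,\partial T_i}^2$ yields
\begin{equation*}
  |m_{T_1}(\lam_h)-m_{T_2}(\lam_h)|^2 \lesssim h^{2-d}\big(|\lam_h|_{h,\partial T_1}^2+|\lam_h|_{h,\partial T_2}^2\big).
\end{equation*}
Chaining this along the $O(1)$ face-neighbours inside a vertex patch, and using $c_F=0$ on a boundary face (so that $m_T(\lam_h)$ itself is controlled for $T$ touching $\partial\Omega$) to match $P_h\lam_h(\bm x)=0$ at boundary vertices, I get, for each vertex $\bm x$ and each $T\in\omega_{\bm x}$,
\begin{equation*}
  |m_T(\lam_h)-P_h\lam_h(\bm x)|^2 \lesssim h^{2-d}\sum_{T'\in\omega_{\bm x}}|\lam_h|_{h,\partial T'}^2 .
\end{equation*}

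With this in hand both estimates follow by splitting at the constant $m_T(\lam_h)$. For \eqref{eq_esti_I_b}, on $\partial T$ write $\lam_h-P_h\lam_h=(\lam_h-m_T(\lam_h))+(m_T(\lam_h)-P_h\lam_h)$; the first piece contributes $h_T\norm{\lam_h-m_T(\lam_h)}_{\partial T}^2=h_T^2|\lam_h|_{h,\partial T}^2\sim h^2|\lam_h|_{h,\partial T}^2$, while $m_T(\lam_h)-P_h\lam_h$ is affine on each face with nodal values bounded above, so $\norm{m_T(\lam_h)-P_h\lam_h}_{\partial T}^2\lesssim h^{d-1}\cdot h^{2-d}\sum_{T'}|\lam_h|_{h,\partial T'}^2$; weighting by $h_T\sim h$ and summing over $T$ (finite patch overlap) gives $\norm{(I-I_h^bP_h)\lam_h}_h^2\lesssim h^2|\lam_h|_h^2\lesssim h^2\norm{(u_h,\lam_h)}_{\mathcal A_h}^2$. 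For \eqref{eq_esti_I_i}, on $T$ write $u_h-I_h^iP_h\lam_h=(u_h-m_T(\lam_h))+\Pi_T(m_T(\lam_h)-P_h\lam_h)$; the first piece sums to $\sum_T\norm{u_h-m_T(\lam_h)}_T^2\lesssim h^2\norm{(u_h,\lam_h)}_{\mathcal A_h}^2$ by quasi-uniformity, and, since $m_T(\lam_h)-P_h\lam_h$ is affine on $T$ and $\Pi_T$ is an $L^2(T)$-contraction, the second is bounded by $\norm{m_T(\lam_h)-P_h\lam_h}_T^2\lesssim h^d\cdot h^{2-d}\sum_{T'}|\lam_h|_{h,\partial T'}^2$, which sums to $h^2|\lam_h|_h^2\lesssim h^2\norm{(u_h,\lam_h)}_{\mathcal A_h}^2$. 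The hard part will be making the jump estimate and patch-chaining fully rigorous: checking that the $h^{2-d}$ scaling is uniform in $d=2,3$ and combines with the $|F|\sim h^{d-1}$, $|T|\sim h^d$ factors to reproduce exactly the power $h^2$, and cleanly handling boundary vertices via the homogeneous condition $\lam_h|_{\partial\Omega}=0$; everything else is bookkeeping with norm equivalences on fixed finite-dimensional spaces and scaling.
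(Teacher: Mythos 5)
Your proof is correct and follows essentially the same route as the paper's: split at the local constant $m_T(\lam_h)$, control the nodal deviations $|P_h\lam_h(\bm x)-m_T(\lam_h)|$ by the jumps $|m_{T_1}(\lam_h)-m_{T_2}(\lam_h)|$ across faces in the vertex patch (your insertion of the face average $c_F$, and your explicit treatment of boundary vertices via $\lam_h|_{\partial\Omega}=0$, just makes rigorous what the paper's estimate \eqref{3456} leaves implicit), bound everything by $|\lam_h|_h$ and $\sum_T h_T^{-2}\norm{u_h-m_T(\lam_h)}_T^2$, and conclude with \eqref{basic_d}. One minor caveat: your reduction ``$P_h\lam_h|_F\in P_1(F)\subseteq M(F)$, hence $I_h^bP_h\lam_h=P_h\lam_h$'' fails for the Type 1 method with $k=0$, but this is harmless since $I_h^b$ is an $L^2(F)$-projection fixing constants, so $\norm{I_h^bP_h\lam_h-m_T(\lam_h)}_{\partial T}\leqslant\norm{P_h\lam_h-m_T(\lam_h)}_{\partial T}$ --- which is exactly the first line of the paper's \eqref{3456} --- and the rest of your argument goes through unchanged.
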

\begin{proof}
    We denote by $\omega_T$   the set $\{T'\in\mathcal T_h:\text{ $T'$ and $T$ share a vertex}\}$ and by
    $\mathcal N(T)$ the set of all vertexes of $T$.
    Since
    \begin{equation}\label{3456}
        \begin{split}
            &h_T\norm{I_h^bP_h\lam_h-m_T(\lam_h)}^2_{\partial T}\\
            \leqslant&~ h_T\norm{P_h\lam_h-m_T(\lam_h)}^2_{\partial T}\\
            \lesssim&~ h_T^d\sum_{\bm x\in\mathcal N(T)}|P_h\lam_h(\bm x)-m_T(\lam_h)|^2\\
            \lesssim&~ h_T^d\sum_{\bm x\in\mathcal N(T)}
            \sum_{\substack{T_1,T_2\in\omega_{\bm x}\\\text{$T_1$ and $T_2$ share a same face}}}
            |m_{T_1}(\lam_h)-m_{T_2}(\lam_h)|^2\\
            \lesssim&~ h_T^2\sum_{T'\in\omega_T}\vertiii{\lam_h}^2_{h,\partial T'},
        \end{split}
    \end{equation}
    we have
    \begin{displaymath}
        \begin{split}
            h_T\norm{(I-I_h^bP_h)\lam_h}^2_{\partial T}
            &\lesssim h_T\norm{\lam_h-m_T(\lam_h)}^2_{\partial T} + h_T\norm{I_h^bP_h\lam_h-m_T(\lam_h)}^2_{\partial T}\\
            &\lesssim h_T^2\sum_{T'\in\omega_T}\vertiii{\lam_h}^2_{h,\partial T'}.
        \end{split}
    \end{displaymath}
    Then the estimate \eqref{eq_esti_I_b} follows immediately from \eqref{basic_d}.\par
    
    On the other hand, since
    \begin{displaymath}
        \begin{split}
            \norm{I_h^iP_h\lam_h-m_T(\lam_h)}^2_T
            &\leqslant\norm{P_h\lam_h-m_T(\lam_h)}^2_T\\
            &\lesssim h_T\norm{P_h\lam_h-m_T(\lam_h)}^2_{\partial T}\\
            &\lesssim h_T^2\sum_{T'\in\omega_T}\vertiii{\lam_h}^2_{h,\partial T'},
            ~~~~~~~~~~~~~\text{(by \eqref{3456})}
        \end{split}
    \end{displaymath}
    it holds
    \begin{displaymath}
        \begin{split}
            \norm{u_h-I_h^iP_h\lam_h}^2_T
            &\lesssim\norm{u_h-m_T(\lam_h)}^2_T + \norm{I_h^iP_h\lam_h-m_T(\lam_h)}^2_T\\
            &\lesssim\norm{u_h-m_T(\lam_h)}^2_T + \sum_{T'\in\omega_T}h_T^2\vertiii{\lam_h}^2_{h,\partial T'}.
        \end{split}
    \end{displaymath}
    Then the estimate \eqref{eq_esti_I_i} also follows immediately from \eqref{basic_d}.\par
    
    Finally, the result \eqref{eq_esti_I_h} is a trivial conclusion from \eqref{eq_esti_I_b} and \eqref{eq_esti_I_i}.
\end{proof}
\begin{lem}
    For any $(u_h,\lam_h)\in V_h\times M_h^0$, it holds
    \begin{equation}\label{eq_IP}
        \norm{I_hP_h\lam_h}_{\mathcal A_h}=\norm{P_h\lam_h}_{\we{\mathcal A_h}}
        \lesssim\norm{(u_h,\lam_h)}_{\mathcal A_h}.
    \end{equation}
\end{lem}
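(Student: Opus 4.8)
The plan is to prove the equality and the estimate separately. For the equality $\norm{I_hP_h\lam_h}_{\mathcal A_h}=\norm{P_h\lam_h}_{\we{\mathcal A_h}}$ I would invoke Remark \ref{rem_I_h}: since $\bmnabla_wI_h\we v_h=\bmnabla\we v_h$ for every $\we v_h\in\we V_h$, the definitions of $a_h$ and $\mathcal A_h$ in \eqref{def_A} and of $\we{\mathcal A_h}$ in \eqref{eq_def_we_A_h} give
\[
\norm{I_h\we v_h}_{\mathcal A_h}^2=a_h(I_h\we v_h,I_h\we v_h)=(\bm a\bmnabla_wI_h\we v_h,\bmnabla_wI_h\we v_h)_\Omega=(\bm a\bmnabla\we v_h,\bmnabla\we v_h)_\Omega=\norm{\we v_h}_{\we{\mathcal A_h}}^2,
\]
and it remains only to set $\we v_h=P_h\lam_h$.

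For the inequality I would use the uniform ellipticity of $\bm a$ to reduce to $\norm{\bmnabla P_h\lam_h}^2=\sum_{T\in\mathcal T_h}\norm{\bmnabla P_h\lam_h}_T^2\lesssim\norm{(u_h,\lam_h)}_{\mathcal A_h}^2$, since $\norm{P_h\lam_h}_{\we{\mathcal A_h}}^2\sim\norm{\bmnabla P_h\lam_h}^2$. Because $P_h\lam_h|_T\in P_1(T)$ has constant gradient on $T$, a scaling argument on the reference simplex gives
\[
\norm{\bmnabla P_h\lam_h}_T^2\lesssim h_T^{d-2}\sum_{\bm x,\bm y\in\mathcal N(T)}|P_h\lam_h(\bm x)-P_h\lam_h(\bm y)|^2 .
\]
The key step is then to bound each vertex difference, and for this I would repeat essentially verbatim the chain of inequalities used in \eqref{3456}: connecting $\bm x$ and $\bm y$ by a uniformly bounded sequence of simplices in $\omega_T$ that share successive faces, and using that $P_h\lam_h(\bm x)$ is a convex average of the local means $m_{T'}(\lam_h)$ over $T'\in\omega_{\bm x}$ (the boundary convention $P_h\lam_h(\bm x)=0$ being compatible with this since $\lam_h$ vanishes on faces lying on $\partial\Omega$), one obtains
\[
|P_h\lam_h(\bm x)-P_h\lam_h(\bm y)|^2\lesssim h_T^{2-d}\sum_{T'\in\omega_T}\vertiii{\lam_h}_{h,\partial T'}^2 .
\]
Combining the two displays yields $\norm{\bmnabla P_h\lam_h}_T^2\lesssim\sum_{T'\in\omega_T}\vertiii{\lam_h}_{h,\partial T'}^2$.

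Finally I would sum over $T$ and use the finite-overlap property of the patches $\{\omega_T\}$ (a consequence of shape regularity) to get $\norm{\bmnabla P_h\lam_h}^2\lesssim\sum_{T\in\mathcal T_h}\vertiii{\lam_h}_{h,\partial T}^2=|\lam_h|_h^2$, and then close the argument with \eqref{basic_d}, which gives $\vertiii{\lam_h}_{h,\partial T}\lesssim\norm{\bmnabla_w(u_h,\lam_h)}_T$ and hence $|\lam_h|_h^2\lesssim\norm{\bmnabla_w(u_h,\lam_h)}^2\sim a_h((u_h,\lam_h),(u_h,\lam_h))=\norm{(u_h,\lam_h)}_{\mathcal A_h}^2$. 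The only genuinely non-routine point is the vertex-difference estimate, but since it is a transcription of the computation already carried out in \eqref{3456}, the main work has been done; what remains is just the bookkeeping of patch overlaps and of the boundary vertices.
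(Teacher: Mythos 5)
Your proof is correct and follows essentially the same route as the paper's: the equality via $\bmnabla_w I_h=\bmnabla$ on $\widetilde{V_h}$ (equivalently $\widetilde{\mathcal A_h}=I_h^t\mathcal A_hI_h$), and the inequality by reducing $\norm{P_h\lam_h}_{\widetilde{\mathcal A_h}}$ to $|P_h\lam_h|_{1,\Omega}$, controlling it by the local-mean differences already bounded in \eqref{3456}, and closing with \eqref{basic_d}. The only cosmetic difference is that the paper estimates the constant gradient on each $T$ via $|P_h\lam_h-m_T(\lam_h)|_{1,T}$ together with inverse and trace-type inequalities rather than via vertex differences.
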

\begin{proof}
    The relation $\norm{I_hP_h\lam_h}_{\mathcal A_h}=\norm{P_h\lam_h}_{\we{\mathcal A_h}}$ follows from \eqref{A-WeA}.  It suffices 
    to prove the inequality of \eqref{eq_IP}.  Since
    \begin{displaymath}
        \begin{split}
            |P_h\lam_h|^2_{1,T}
            &=|P_h\lam_h-m_T(\lam_h)|^2_{1,T}\\
            &\lesssim h_T^{-2}\norm{P_h\lam_h-m_T(\lam_h)}^2_T~~~~~~~~~~~\text{(by inverse estimate)}\\
            &\lesssim h_T^{-1}\norm{P_h\lam_h-m_T(\lam_h)}^2_{\partial T}\\
            &\lesssim\sum_{T'\in\omega_T}\vertiii{\lam_h}^2_{h,\partial T'},
            ~~~~~~~~~~~~~~~~~~~~~~~\text{(by \eqref{3456})}
        \end{split}
    \end{displaymath}
    we have
    \begin{displaymath}
        \norm{P_h\lam_h}_{\we{\mathcal A_h}}\sim|P_h\lam_h|_{1,\Omega}\lesssim\vertiii{\lam_h}_h,
    \end{displaymath}
    which, together with \eqref{basic_d}, implies   the desired conclusion.
\end{proof}
\begin{assumption}\label{assum_R_h}
    The smoother $\mathcal R_h:V_h\times M_h^0\to V_h\times M_h^0$ is SPD with respect ro $(\cdot,\cdot)_h$ and satisfies
    \begin{equation}
        \sigma(\mathcal R_h\mathcal A_h)\subset (0, 1],
    \end{equation}
    where $\sigma(\mathcal R_h\mathcal A_h)$ denotes the set of all eigenvalues of $\mathcal R_h\mathcal A_h$. 
    What's more, for any $(u_h,\lam_h)\in V_h\times M_h^0$, it holds
    \begin{equation}
        \norm{(u_h,\lam_h)}^2_{\overline{\mathcal R_h}^{-1}}
        \leqslant C_R\lam_{max}(\mathcal A_h)\norm{(u_h,\lam_h)}^2_h,
    \end{equation}
    where  $\overline{\mathcal R_h}$ is the symmetrization of $ \mathcal R_h$,  and $C_R$ denotes a positive constant.
\end{assumption}
\begin{rem}
    If we take $\mathcal R_h=\frac{1}{\lam_{max}(\mathcal A_h)}I$, then it holds
    $\overline{\mathcal R_h}^{-1}=\lam^2_{max}(\mathcal A_h)(2\lam_{max}(\mathcal A_h)I-\mathcal A_h)^{-1}$.
   In this case it is obvious that $C_R=1$. If we take $\mathcal R_h$ to be the symmetric Gauss-Seidel smoother,
    then $C_R$ is a bounded positive constant   independent of the mesh size $h$.
\end{rem}
\begin{rem}
    Suppose {\bf Assumption \ref{assum_R_h}} holds, then the relation  $I-\overline{\mathcal R_h}\mathcal A_h
    =(I-\mathcal R_h\mathcal A_h)^2$ leads to $\sigma(I-\overline{\mathcal R_h}\mathcal A_h)
    \subset [0,1)$ and   it follows $\norm{I-\overline{\mathcal R_h}\mathcal A_h}_{\mathcal A_h}<1$.
\end{rem}
\begin{lem}
    Under {\bf Assumption \ref{assum_R_h}}, for any $(u_h,\lam_h)\in V_h\times M_h^0$,
    it holds
    \begin{equation}\label{eq_esti_S_h}
        \norm{\mathcal R_h\mathcal A_h(u_h,\lam_h)}_{\overline{\mathcal R_h}^{-1}}
        \leqslant\norm{(u_h,\lam_h)}_{\mathcal A_h}.
    \end{equation}
\end{lem}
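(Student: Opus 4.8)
The plan is to exploit the defining property of the symmetrization $\overline{\mathcal R_h}$ together with the spectral containment $\sigma(\mathcal R_h\mathcal A_h)\subset(0,1]$ guaranteed by Assumption \ref{assum_R_h}. The key algebraic identity is that $\overline{\mathcal R_h}=\mathcal R_h^t+\mathcal R_h-\mathcal R_h^t\mathcal A_h\mathcal R_h$ can, since $\mathcal R_h$ is SPD with respect to $(\cdot,\cdot)_h$, be written as $\overline{\mathcal R_h}=\mathcal R_h(2\mathcal R_h^{-1}-\mathcal A_h)\mathcal R_h$, whence $\overline{\mathcal R_h}^{-1}=\mathcal R_h^{-1}(2\mathcal R_h^{-1}-\mathcal A_h)^{-1}\mathcal R_h^{-1}$. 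The strategy is then to rewrite the left-hand norm of \eqref{eq_esti_S_h} in terms of $\mathcal A_h$ and a function of the operator $T:=\mathcal R_h\mathcal A_h$, and to compare it with $\norm{(u_h,\lam_h)}_{\mathcal A_h}^2$ via a scalar inequality valid for all eigenvalues of $T$ in $(0,1]$.

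Concretely, I would first compute, for $w:=(u_h,\lam_h)$,
\[
    \norm{\mathcal R_h\mathcal A_h w}_{\overline{\mathcal R_h}^{-1}}^2
    = \big(\overline{\mathcal R_h}^{-1}\mathcal R_h\mathcal A_h w,\,\mathcal R_h\mathcal A_h w\big)_h
    = \big((2\mathcal R_h^{-1}-\mathcal A_h)^{-1}\mathcal A_h w,\,\mathcal A_h w\big)_h,
\]
using the identity for $\overline{\mathcal R_h}^{-1}$ above and cancelling the outer $\mathcal R_h^{-1}$ and $\mathcal R_h$ factors against the two copies of $\mathcal R_h\mathcal A_h$. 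Next, introduce the operator $T=\mathcal R_h\mathcal A_h$, which is SPD with respect to $(\cdot,\cdot)_{\mathcal A_h}$ (product of two SPD operators in the appropriate inner products), and rewrite $(2\mathcal R_h^{-1}-\mathcal A_h)^{-1}\mathcal A_h = (2T^{-1}-I)^{-1}$ as an operator acting within the $(\cdot,\cdot)_{\mathcal A_h}$ geometry; thus the quantity above equals $\big((2T^{-1}-I)^{-1}w,\,w\big)_{\mathcal A_h}$. Since $\sigma(T)\subset(0,1]$, the spectral theorem reduces the desired bound to the scalar inequality $(2t^{-1}-1)^{-1}=\dfrac{t}{2-t}\le 1$ for all $t\in(0,1]$, which is immediate because $2-t\ge 1\ge t$ there. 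This yields $\norm{\mathcal R_h\mathcal A_h w}_{\overline{\mathcal R_h}^{-1}}^2\le \norm{w}_{\mathcal A_h}^2$, as claimed.

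The main obstacle is purely bookkeeping: one must carefully track which inner product ($(\cdot,\cdot)_h$ versus $(\cdot,\cdot)_{\mathcal A_h}$) each operator is symmetric with respect to, so that the spectral theorem is applied legitimately. In particular, $\overline{\mathcal R_h}^{-1}$ is SPD in $(\cdot,\cdot)_h$ while $T=\mathcal R_h\mathcal A_h$ is SPD in $(\cdot,\cdot)_{\mathcal A_h}$, and the cancellation in the first display relies on $\mathcal R_h=\mathcal R_h^t$ in $(\cdot,\cdot)_h$ (part of Assumption \ref{assum_R_h}); once the middle operator $(2\mathcal R_h^{-1}-\mathcal A_h)^{-1}\mathcal A_h$ is recognized as $(2T^{-1}-I)^{-1}$ acting as an SPD operator in $(\cdot,\cdot)_{\mathcal A_h}$, the remaining step is the trivial scalar estimate $t/(2-t)\le 1$ on $(0,1]$. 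No regularity or mesh-size information enters, so there is no analytic difficulty beyond this operator-algebra care.
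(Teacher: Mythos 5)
Your proposal is correct and follows essentially the same route as the paper: both factor the symmetrization using $\mathcal R_h^t=\mathcal R_h$, express the quantity as $\bigl(\varphi(\mathcal S_h)(u_h,\lam_h),(u_h,\lam_h)\bigr)_{\mathcal A_h}$ with $\mathcal S_h=\mathcal R_h\mathcal A_h$ SPD in the $\mathcal A_h$-inner product, and conclude from the scalar bound $t/(2-t)\leqslant 1$ on $(0,1]$ (the paper writes this as $t(2t-t^2)^{-1}t\leqslant 1$, which is the same function). The only difference is cosmetic bookkeeping in where the $\mathcal R_h$ factors are cancelled.
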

\begin{proof}
    Denoting $\mathcal S_h:= \mathcal R_h\mathcal A_h$ and thanks to 
    \begin{displaymath}
        \overline{\mathcal R_h} = 2\mathcal R_h-\mathcal R_h\mathcal A_h\mathcal R_h
        =(2\mathcal S_h-\mathcal S_h^2)\mathcal A_h^{-1},
    \end{displaymath}
    we have
    \begin{equation}\label{9991}
        \begin{split}
            \norm{\mathcal R_h\mathcal A_h(u_h,\lam_h)}^2_{\overline{\mathcal R_h}^{-1}}
            &=(\overline{\mathcal R_h}^{-1}\mathcal R_h\mathcal A_h(u_h,\lam_h),\mathcal R_h\mathcal A_h(u_h,\lam_h))_h\\
            &=(\mathcal A_h(2\mathcal S_h-\mathcal S_h^2)^{-1}\mathcal S_h(u_h,\lam_h),
            \mathcal R_h\mathcal A_h(u_h,\lam_h))_h\\
            &=(\mathcal S_h(2\mathcal S_h-\mathcal S_h^2)^{-1}\mathcal S_h(u_h,\lam_h),(u_h,\lam_h))_{\mathcal A_h},
        \end{split}
    \end{equation}
  which,   together with the fact that  $\mathcal S_h$ is SPD with respect to $(\cdot,\cdot)_{\mathcal A_h}$ 
    and the inequality
    \begin{displaymath}
        t(2t-t^2)^{-1}t\leqslant 1,t\in (0,1],
    \end{displaymath}
  yields the desired estimate \eqref{eq_esti_S_h}.
\end{proof}
Finally, we state the following convergence  theorem.
\begin{thm}\label{thm_two_level_convergence}
    Under   {\bf Assumptions \ref{assum_smoother}-\ref{assum_R_h}}, it holds
    \begin{equation}\label{convergence-two}
        \norm{(I-\mathcal B_h\mathcal A_h)}_{\mathcal A_h}\leqslant 1-\frac{1}{K},
    \end{equation}
    where 
    \begin{equation}
        K\lesssim \left(1+C_R+\frac{1}{1-\norm{I-\overline{\we{\mathcal R_h}}\we{\mathcal A_h}}_{\we{\mathcal A_h}}}\right).
    \end{equation}
\end{thm}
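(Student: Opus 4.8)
The plan is to read off the convergence bound from the extended X-Z identity \eqref{eq_convergence_xz} of Theorem \ref{thm_X-Z}: since $\norm{I-\mathcal B_h\mathcal A_h}_{\mathcal A_h}=1-1/K$ with $K$ given by \eqref{def_K}, the inequality \eqref{convergence-two} will follow at once, and all that remains is to bound $K$ by a mesh-independent multiple of $1+C_R+(1-\norm{I-\overline{\we{\mathcal R_h}}\we{\mathcal A_h}}_{\we{\mathcal A_h}})^{-1}$. To this end I fix $(u_h,\lam_h)\in V_h\times M_h^0$ with $\norm{(u_h,\lam_h)}_{\mathcal A_h}=1$ and simply exhibit one admissible decomposition in the infimum of \eqref{def_K}, namely the two-level splitting induced by the quasi-interpolation $P_h$: set $\we v_h:=P_h\lam_h\in\we V_h$ and $(v_h,\mu_h):=(u_h,\lam_h)-I_hP_h\lam_h\in V_h\times M_h^0$, so that $(v_h,\mu_h)+I_h\we v_h=(u_h,\lam_h)$. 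Then $K$ is at most the supremum over such $(u_h,\lam_h)$ of the right-hand side of \eqref{def_K} evaluated at this particular choice.

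For the first term $\norm{(v_h,\mu_h)+\mathcal R_h^t\mathcal A_hI_h\we v_h}^2_{\overline{\mathcal R_h}^{-1}}$ I apply the triangle inequality in the $\overline{\mathcal R_h}^{-1}$-norm and treat the two pieces separately. For $\norm{(v_h,\mu_h)}_{\overline{\mathcal R_h}^{-1}}$ I invoke the second part of Assumption \ref{assum_R_h}, $\norm{(v_h,\mu_h)}^2_{\overline{\mathcal R_h}^{-1}}\leqslant C_R\lam_{max}(\mathcal A_h)\norm{(v_h,\mu_h)}^2_h$, combined with $\lam_{max}(\mathcal A_h)\lesssim h^{-2}$ (Theorem \ref{bound1}) and the approximation estimate \eqref{eq_esti_I_h}, $\norm{(v_h,\mu_h)}_h=\norm{(u_h,\lam_h)-I_hP_h\lam_h}_h\lesssim h\norm{(u_h,\lam_h)}_{\mathcal A_h}=h$, which together give $\norm{(v_h,\mu_h)}^2_{\overline{\mathcal R_h}^{-1}}\lesssim C_R$. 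For $\norm{\mathcal R_h^t\mathcal A_hI_h\we v_h}_{\overline{\mathcal R_h}^{-1}}$ I use that $\mathcal R_h$ is SPD with respect to $(\cdot,\cdot)_h$ (Assumption \ref{assum_R_h}), so $\mathcal R_h^t=\mathcal R_h$, then the smoother estimate \eqref{eq_esti_S_h} applied to the argument $I_h\we v_h\in V_h\times M_h^0$, and finally the stability estimate \eqref{eq_IP}: $\norm{\mathcal R_h\mathcal A_hI_h\we v_h}_{\overline{\mathcal R_h}^{-1}}\leqslant\norm{I_h\we v_h}_{\mathcal A_h}=\norm{P_h\lam_h}_{\we{\mathcal A_h}}\lesssim\norm{(u_h,\lam_h)}_{\mathcal A_h}=1$. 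Squaring and adding bounds the whole first term by $\lesssim 1+C_R$.

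For the second term $\norm{\we v_h}^2_{\overline{\we{\mathcal R_h}}^{-1}}=\norm{P_h\lam_h}^2_{\overline{\we{\mathcal R_h}}^{-1}}$ I convert the $\overline{\we{\mathcal R_h}}^{-1}$-norm into the energy norm $\norm{\cdot}_{\we{\mathcal A_h}}$. Writing $\overline{\we{\mathcal R_h}}^{-1}=\we{\mathcal A_h}\bigl(I-(I-\overline{\we{\mathcal R_h}}\we{\mathcal A_h})\bigr)^{-1}$ and noting that $I-\overline{\we{\mathcal R_h}}\we{\mathcal A_h}$ is symmetric with respect to $(\cdot,\cdot)_{\we{\mathcal A_h}}$ (because $\overline{\we{\mathcal R_h}}$ is SPD with respect to $(\cdot,\cdot)_\Omega$ by Remark \ref{rem_smoother}) with $(\cdot,\cdot)_{\we{\mathcal A_h}}$-norm $\rho:=\norm{I-\overline{\we{\mathcal R_h}}\we{\mathcal A_h}}_{\we{\mathcal A_h}}<1$ by \eqref{assum_overline_we_R}, the operator $\bigl(I-(I-\overline{\we{\mathcal R_h}}\we{\mathcal A_h})\bigr)^{-1}$ has $(\cdot,\cdot)_{\we{\mathcal A_h}}$-spectrum in $[(1+\rho)^{-1},(1-\rho)^{-1}]$, whence $\norm{\we v_h}^2_{\overline{\we{\mathcal R_h}}^{-1}}=\bigl((I-(I-\overline{\we{\mathcal R_h}}\we{\mathcal A_h}))^{-1}\we v_h,\we v_h\bigr)_{\we{\mathcal A_h}}\leqslant(1-\rho)^{-1}\norm{\we v_h}^2_{\we{\mathcal A_h}}$. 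A final use of \eqref{eq_IP} gives $\norm{P_h\lam_h}^2_{\we{\mathcal A_h}}\lesssim\norm{(u_h,\lam_h)}^2_{\mathcal A_h}=1$, so this term is $\lesssim(1-\rho)^{-1}$. Adding the two contributions and taking the supremum over $(u_h,\lam_h)$ yields $K\lesssim 1+C_R+(1-\rho)^{-1}$, and \eqref{convergence-two} then follows from \eqref{eq_convergence_xz}. The only substantive analytic ingredients are the two $P_h$-estimates \eqref{eq_esti_I_h} and \eqref{eq_IP}, which are already proved; given these the argument is bookkeeping, the one point deserving care being that the coarse-correction term must be controlled in the energy norm $\norm{\cdot}_{\we{\mathcal A_h}}$ rather than the $L^2$ norm — precisely where Assumption \ref{assum_smoother} enters through the spectral manipulation above.
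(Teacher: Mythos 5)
Your proposal is correct and follows essentially the same route as the paper: the same decomposition $\we v_h=P_h\lam_h$, $(v_h,\mu_h)=(u_h,\lam_h)-I_hP_h\lam_h$, the same use of the estimates \eqref{eq_esti_I_h}, \eqref{eq_IP}, \eqref{eq_esti_S_h} together with Assumption \ref{assum_R_h} and $\lam_{max}(\mathcal A_h)\sim h^{-2}$, and the same reduction of the coarse term to $(1-\rho)^{-1}\norm{\we v_h}^2_{\we{\mathcal A_h}}$ before invoking the extended X-Z identity \eqref{eq_convergence_xz}. The only differences are cosmetic (you spell out the spectral argument for $\overline{\we{\mathcal R_h}}^{-1}$ slightly more explicitly than the paper's one-line eigenvalue identity).
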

\begin{proof}
    For any $(u_h,\lam_h)\in V_h\times M_h^0$, set  
    $$\we v_h:=P_h\lam_h,\quad  (v_h,\mu_h) := (u_h,\lam_h)-I_h\we v_h,$$ we then obtain
    \begin{displaymath}
        \begin{split}
            \norm{(v_h,\mu_h)+\mathcal R_h\mathcal A_hI_h\we v_h}^2_{\overline{\mathcal R_h}^{-1}}
            &\lesssim \norm{(v_h,\mu_h)}^2_{\overline{\mathcal R_h}^{-1}}
            +\norm{\mathcal R_h\mathcal A_hI_h\we v_h}^2_{\overline{\mathcal R_h}^{-1}}\\
            &\lesssim\norm{(v_h,\mu_h)}^2_{\overline{\mathcal R_h}^{-1}}
            +\norm{I_h\we v_h}^2_{\mathcal A_h}~~~~~~~~~~~~~~~~~~~~\text{(by \eqref{eq_esti_S_h})}\\
            &\lesssim\norm{(v_h,\mu_h)}^2_{\overline{\mathcal R_h}^{-1}}
            +\norm{(u_h,\lam_h)}^2_{\mathcal A_h}~~~~~~~~~~~~~~~~~\text{(by \eqref{eq_IP})}\\
            &\lesssim C_R\lam_{max}(\mathcal A_h)\norm{(v_h,\mu_h)}_h^2
            +\norm{(u_h,\lam_h)}^2_{\mathcal A_h}~~~~~\text{(by {\bf Assumption \ref{assum_R_h}})}\\
            &\lesssim (1+C_R)\norm{(u_h,\lam_h)}^2_{\mathcal A_h},
        \end{split}
    \end{displaymath}
  where,   in the last inequality, we have used the estimate \eqref{eq_esti_I_h} and the fact   $\lam_{max}(\mathcal A_h)\sim h^{-2}$ derived from 
    Theorems \ref{bound1}-\ref{bound2}.
    Similar to \eqref{I-BA}, we have
    \begin{displaymath}
        \norm{I-\overline{\we{\mathcal R_h}}\we{\mathcal A_h}}_{\we{\mathcal A_h}} 
        = 1 - \lam_{min}(\overline{\we{\mathcal R_h}}\we{\mathcal A_h}),
    \end{displaymath}
    and it follows
    \begin{displaymath}
        \begin{split}
            \norm{\we v_h}^2_{\overline{\we{\mathcal R_h}}^{-1}}
            &\leqslant\frac{1}{\lam_{min}(\overline{\we{\mathcal R_h}}\we{\mathcal A_h})}
            \norm{\we v_h}^2_{\we{\mathcal A_h}}
            =\frac{1}{1-\norm{I-\overline{\we{\mathcal R_h}}\we{\mathcal A_h}}_{\we{\mathcal A_h}}}
            \norm{\we v_h}^2_{\we{\mathcal A_h}}\\
            &\lesssim\frac{1}{1-\norm{I-\overline{\we{\mathcal R_h}}\we{\mathcal A_h}}_{\we{\mathcal A_h}}}
            \norm{(u_h,\lam_h)}^2_{\mathcal A_h}.~~~~~~~~~~~~~~\text{(by \eqref{eq_IP})}\\
        \end{split}
    \end{displaymath}
    Therefore, we have
    \begin{displaymath}
        \begin{split}
            &\norm{(v_h,\mu_h)+\mathcal R_h\mathcal A_hI_h\we v_h}^2_{\overline{\mathcal R_h}^{-1}}
            +\norm{\we v_h}^2_{\overline{\we{\mathcal R_h}}^{-1}}\\
            \lesssim&(1+C_R+\frac{1}{1-\norm{I-\overline{\we{\mathcal R_h}}\we{\mathcal A_h}}_{\we{\mathcal A_h}}})
            \norm{(u_h,\lam_h)}^2_{\mathcal A_h},
        \end{split}
    \end{displaymath}
    which implies
    \begin{displaymath}
        \begin{split}
            &\sup_{\norm{(u_h,\lam_h)}_{\mathcal A_h}=1}\inf_{\substack{(v_h,\mu_h)+I_h\we v_h = (u_h,\lam_h)\\
                    (v_h,\mu_h)\in V_h\times M_h^0,\we v_h\in\we V_h}}
            \norm{(v_h,\mu_h)+\mathcal R_h^t\mathcal A_hI_h\we v_h}^2_{\overline{\mathcal R_h}^{-1}}
            +\norm{\we v_h}^2_{\overline{\we{\mathcal R_h}}^{-1}}\\
            \lesssim &(1+C_R+\frac{1}{1-\norm{I-\overline{\we{\mathcal R_h}}\we{\mathcal A_h}}_{\we{\mathcal A_h}}}).
        \end{split}
    \end{displaymath}
    As a result, the desired estimate \eqref{convergence-two} follows   from the extended X-Z identity \eqref{eq_convergence_xz}  
    in Theorem \ref{thm_X-Z}.
\end{proof}
\begin{rem}
    In our analysis, we do not use any regularity assumption of the model problem \eqref{eq_problem}.
    Thus our theory applies to the case that \eqref{eq_problem} doesn't have full elliptic
    regularity. However, if $\we{\mathcal R_h}$ is construted by standard multigrid methods, as shown 
    in \cite{Bpj3.}-\cite{Bpj1}, the lack of full regualrity will affect the convergence rate 
    $
    \norm{I-\overline{\we{\mathcal R_h}}\we{\mathcal A_h}}_{\we{\mathcal A_h}}.
    $ 
\end{rem}
\section{Numerical experiments}\label{sec_numerical}
This section reports some numerical results in two space dimensions to verify our theoretical 
results. For the model problem \eqref{eq_problem}, we set $\bm a\in\mathbb R^{2\times 2}$ to  
be the identity matrix, $\Omega=(0,1)\times(0,1)$ and we shall use the {\bf Type 2} WG method 
($k=1$). When given a coarse triangulation $\mathcal T_0$, we produce a sequence of uniformly refined
triangulations $\{\mathcal T_i:i=0,1,\ldots,5\}$ (cf. Figure \ref{mesh0_mesh1} for $\mathcal T_0$ and
$\mathcal T_1$) by a simple procedure: $\mathcal T_{j+1}$ is obtained by
connecting the midpoints of all edges of $\mathcal T_j$ for $j=0,1,2,3,4$.
\par
\begin{figure}[H]
    \begin{center}
        \includegraphics[width=0.4\linewidth]{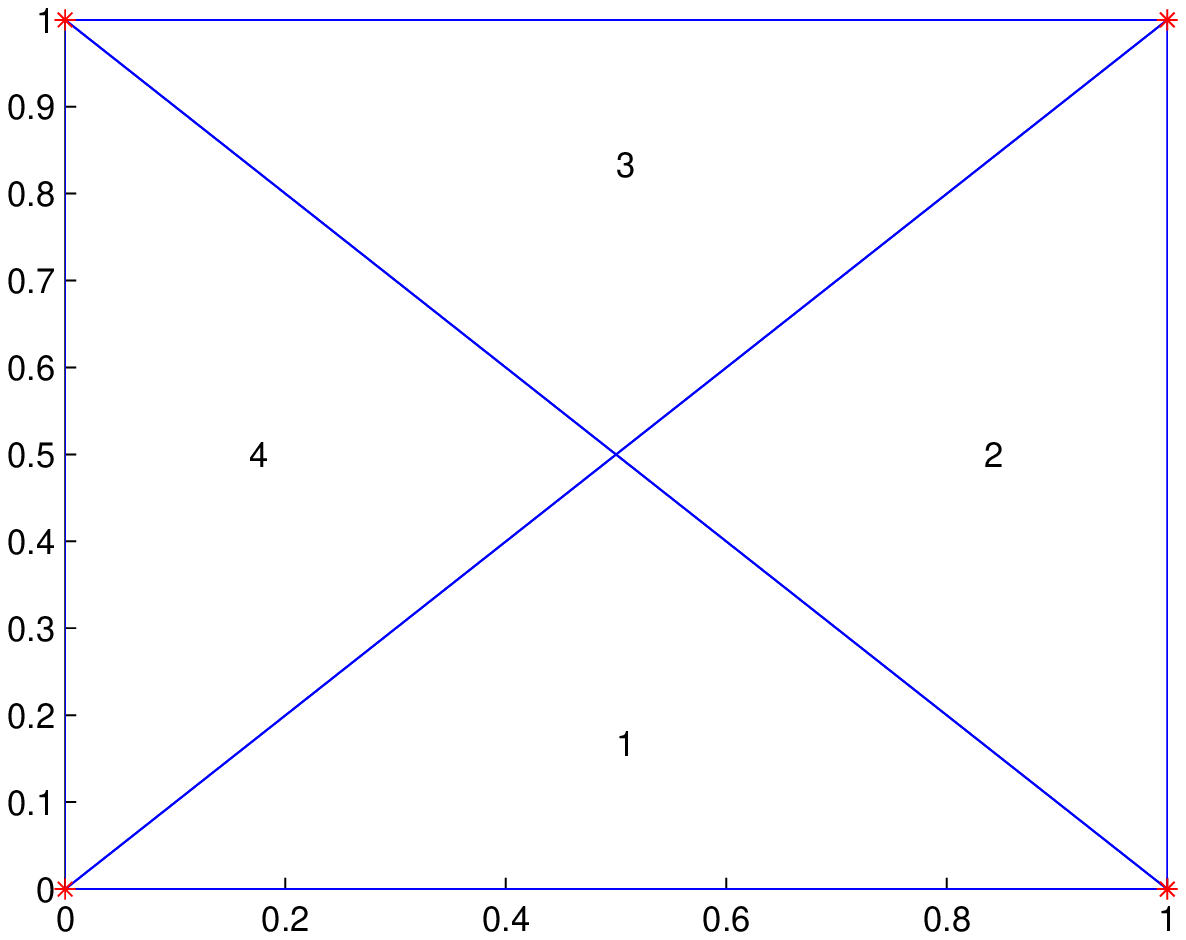}
        \includegraphics[width=0.4\linewidth]{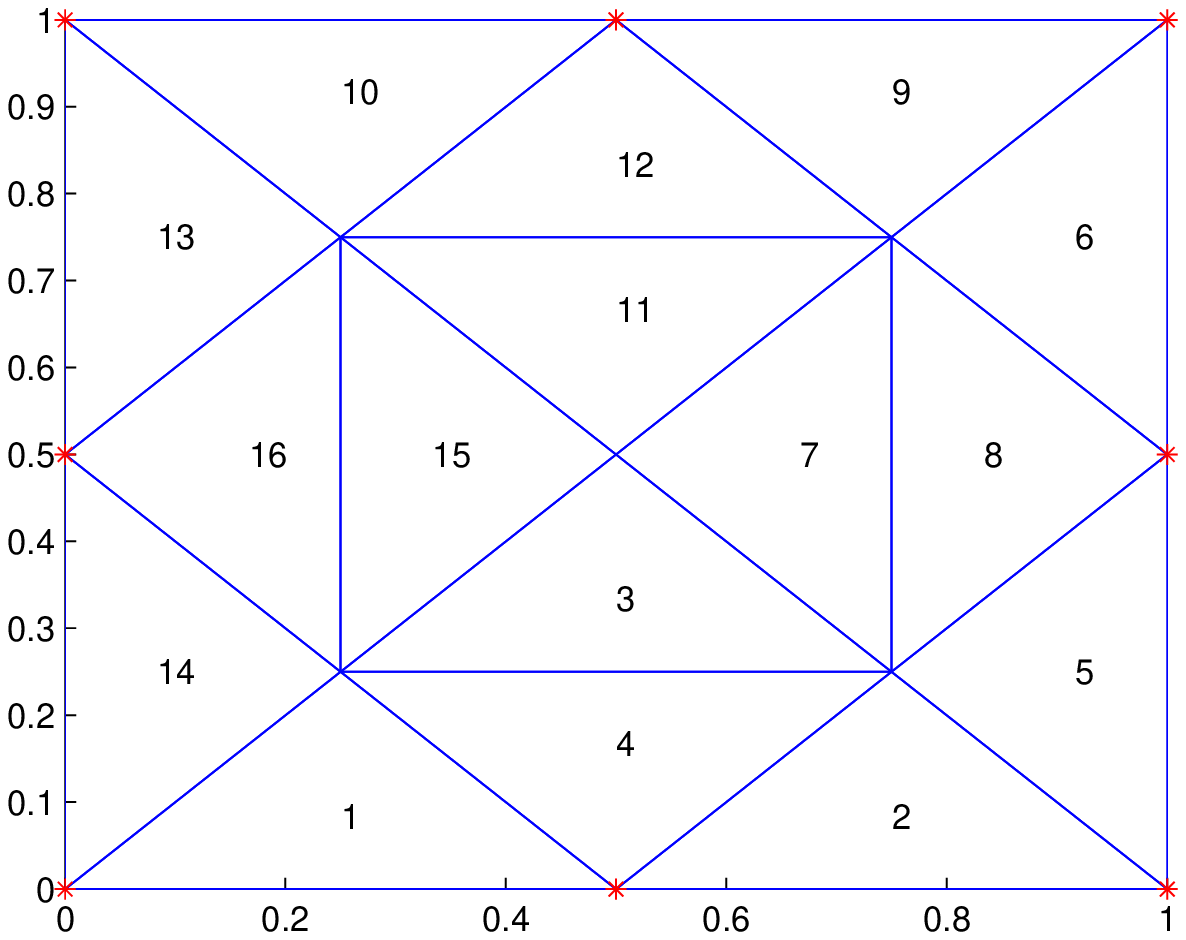}
    \end{center}
    \caption{$\mathcal T_0$ (left) and $\mathcal T_1$ (right)}\label{mesh0_mesh1}
\end{figure}
In our first experiment, we compute the smallest eigenvalue $\lam_{min}(A_h)$, the largest eigenvalue $\lam_{max}(\mathcal A_h)$ and the condition number $\kappa( A_h)$ of  the stiffness matrix  $A_h$ on each 
triangulation $\mathcal T_i$ and list them   in  Table \ref{table_cond_num}. The results  imply $\kappa(\mathcal A_h)\sim \kappa( A_h)=O(h^{-2})$, which is conformable to  Theorem \ref{thm_condition_num}.
\begin{table}[H]
    \begin{center}
        \caption{Condition numbers of $A_h$ at different triangulations}\label{table_cond_num}
        \begin{tabular}{|c|c|c|c|c|c|c|}
            \hline
            & $\mathcal T_0$ & $\mathcal T_1$ & $\mathcal T_2$ & $\mathcal T_3$ & $\mathcal T_4$ & $\mathcal T_5$ \\
            \hline
            $\lam_{min}(A_h)$ & 0.55 & 0.28 & 0.075 & 0.019 & 0.0048 & 0.0012\\
            \hline
            $\lam_{max}(  A_h)$ & 27.63 & 33.31 & 33.46 & 33.47 & 33.47 & 33.47\\
            \hline
            $\kappa( A_h)$ & 50.1 & 121.1 & 444.8 & 1746.7 & 6954.6 & 27792\\
            \hline
        \end{tabular}
    \end{center}
\end{table}
\par
In our second experiment, for each triangulation $\mathcal T_j$, we set $\mathcal T_h=\mathcal T_j$ and 
take $\mathcal R_h$ to be the $m$-times symmetric Gauss-Seidel iteration with 
$\we{\mathcal R_h} = \we{\mathcal A_h}^{-1}$. We are to solve the problem
$A_hx=b$, where $b$ is a zero vector, In order to verify the convergence, in Algorithm 
\ref{algorithm2}, we take $x_0=(1,1,\ldots,1)^t$ as the initial value, rather than the zero
vector. We stop the two-level algorithm when the initial error, i.e. $\sqrt{x_0^tA_hx_0}$, 
is reduced by a factor of $10^{-8}$. The corresponding results listed in   Table \ref{table_SGS} show that the  two-level algorithm is efficient.
\par
Our third experiment is a modification of the second one. In this experiment,   the operator
$\we{\mathcal R_h}$ is constructed by using the standard $V$-cycle multigrid method  based on the nested triangulations 
$\mathcal T_0,\mathcal T_1,\ldots,\mathcal T_j$, rather than by simply setting $\we{\mathcal R_h}=\we{\mathcal A_h}^{-1}$. 
Here we set all smoothers encountered to be the $m$-times symmetric Gauss-Seidel iterations. 
This is a practical multi-level algorithm. The numerical results  listed
  in  Tables \ref{table_multi_level}-\ref{table_multi_level_ave} show that the  multi-level algorithm is efficient.

\begin{table}[H]
    \begin{center}
        \caption{Numbers of iterations for two-level algorithm}
        \label{table_SGS}
        \begin{tabular}{|c|c|c|c|c|c|c|}
            \hline
            & $\mathcal T_0$ & $\mathcal T_1$ & $\mathcal T_2$ &  $\mathcal T_3$ & $\mathcal T_4$ & $\mathcal T_5$ \\
            \hline
            $m=1$ & 13 & 23 & 28 & 31 & 31 & 31\\
            \hline
            $m=2$ & 8 & 12 & 14 & 17 & 17 & 17\\
            \hline  
            $m=3$ & 7 & 9 & 10 & 12 & 13 & 13\\
            \hline
            $m=4$ & 6  & 8 & 9 & 9 & 10 & 10 \\
            \hline
            $m=10$ & 4 & 6  & 6  & 7  & 7 & 7 \\
            \hline
        \end{tabular}
    \end{center}
\end{table}
\begin{table}[H]
    \begin{center}
        \caption{Number of iterations for multi-level algorithm}\label{table_multi_level}
        \begin{tabular}{|c|c|c|c|c|c|}
            \hline
            & $\mathcal T_1$ & $\mathcal T_2$ & $\mathcal T_3$ & $\mathcal T_4$ & $\mathcal T_5$\\
            \hline
            $m=1$  & 22 & 28 & 31 & 31  & 31\\
            \hline
            $m=2$  & 12 & 14 & 17 & 17 & 17\\ 
            \hline
            $m=3$  &  9 & 10 & 12 & 13 & 13\\
            \hline  
        \end{tabular}
    \end{center}
\end{table}
\begin{table}[H]
    \begin{center}
        \caption{Average error reduction rates for multi-level algorithm}\label{table_multi_level_ave}
        \begin{tabular}{|c|c|c|c|c|c|}
            \hline
            & $\mathcal T_1$ & $\mathcal T_2$ & $\mathcal T_3$ & $\mathcal T_4$ & $\mathcal T_5$ \\
            \hline
            $m=1$  & 0.53 & 0.56 & 0.56 & 0.56 & 0.56\\
            \hline
            $m=2$  & 0.31 & 0.36 & 0.36 & 0.36 & 0.36\\ 
            \hline
            $m=3$  & 0.19 & 0.25 & 0.26 & 0.26 & 0.27 \\
            \hline  
        \end{tabular}
    \end{center}
\end{table}
\appendix
\section{Appendix: Proof of Lemma \ref{ref_lem_basic_inequality}}\label{append_2}
For any simplex $T$ with vertexes $\bm a_1,\bm a_2,\ldots,\bm a_{d+1}$, let $\lam_i$ be the 
barycentric coordinate function associated with the vertex $\bm a_i$ for $i=1,2,\ldots,d+1$. 
We first introduce
\begin{displaymath}
    \Lambda(T):=Q_1(T)+Q_2(T)+\ldots Q_{d+1}(T),
\end{displaymath}
where
\begin{displaymath}
    Q_i(T)=\left(\prod_{j\neq i}\lam_j\right)\text{span}
    \left\{\prod_j{\lam_j^{\alpha_j}}:\sum_{j}\alpha_j=k,\alpha_i=0\right\}, i=1,2,\ldots,d+1.
\end{displaymath}
Then we define the operator $\mathcal S:L^2(\partial T)\to\Lambda(T)$ as follows: For any 
$\mu\in L^2(\partial T)$, $\mathcal S\mu$ satisfies
\begin{displaymath}
    \int_F\mathcal S\mu q = \int_F\mu q,~\forall q\in P_k(F),\text{ for each face $F$ of $T$}.
\end{displaymath}
Finally, we define $\mathcal R:L^2(\partial T)\to P_1(T)+\Lambda(T)$ by
\begin{displaymath}
    \mathcal R\mu := \Pi^{CR}\mu+\mathcal S(\mu-\Pi^{CR}\mu).
\end{displaymath}
where $\Pi^{CR}\mu\in P_1(T)$ satisfies 
\begin{displaymath}
    \int_F\Pi^{CR}\mu := \int_F\mu,\text{ for each face $F$ of $T$}.
\end{displaymath}

By recalling  $M(\partial T):=\{\mu\in L^2(\partial T):\mu|_F\in M(F), 
\text{ for each face $F$ of $T$}\}$ and using  standard scaling arguments, it is easy
to derive the following lemma.
\begin{lem}
    For any $\mu\in M(\partial T)$, it holds
    \begin{eqnarray}
        \norm{\mu}_{h,\partial T}&\sim&\norm{\mathcal R\mu}_T,\label{65322}\\
        \vertiii{\mu}_{h,\partial T}&\sim&|\mathcal R\mu|_{1,T}\label{65323}.
    \end{eqnarray}
\end{lem}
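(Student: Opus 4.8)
The final statement to prove is the lemma asserting, for any $\mu\in M(\partial T)$, the norm equivalences $\norm{\mu}_{h,\partial T}\sim\norm{\mathcal R\mu}_T$ and $\vertiii{\mu}_{h,\partial T}\sim|\mathcal R\mu|_{1,T}$. The plan is to reduce everything to a reference simplex $\hat T$ by affine transformation, where the statement becomes a pure finite-dimensional norm-equivalence, and then transfer back by scaling.

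\textbf{Step 1: Reduce to the reference element.} I would fix a reference simplex $\hat T$ with vertices $\hat{\bm a}_1,\dots,\hat{\bm a}_{d+1}$ and the affine map $F_T:\hat T\to T$ with $F_T(\hat x)=B_T\hat x+b_T$. Under quasi-uniformity, $\norm{B_T}\sim h_T$ and $\norm{B_T^{-1}}\sim h_T^{-1}$, while $|\det B_T|\sim h_T^d$. The barycentric coordinates are affine-invariant, so $\Lambda(T)=\Lambda(\hat T)\circ F_T^{-1}$ and $P_1(T)\circ F_T = P_1(\hat T)$; likewise $M(F)$ pulls back to the corresponding polynomial space on faces of $\hat T$. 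Crucially, the operators $\Pi^{CR}$, $\mathcal S$, and hence $\mathcal R$ commute with the pullback, because each is defined purely by face-integral conditions against polynomial test functions, and those conditions are preserved under the affine change of variables (up to the common Jacobian factor on each face, which cancels). Thus $\widehat{\mathcal R\mu} = \hat{\mathcal R}\hat\mu$ where hats denote pullbacks.

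\textbf{Step 2: Norm equivalence on the reference element.} On $\hat T$ the key observation is that $\hat{\mathcal R}:M(\partial\hat T)\to P_1(\hat T)+\Lambda(\hat T)$ is \emph{injective}: if $\hat{\mathcal R}\hat\mu=0$ then $\hat\mu=0$ on each face (this is where the specific construction of $\Lambda(\hat T)$ as a sum of face bubble-type spaces $Q_i(\hat T)$ matters — it is chosen precisely so that $\mathcal S$ recovers the trace data and $\mathcal R$ is a right inverse of the trace operator restricted to $M(\partial\hat T)$). Since $M(\partial\hat T)$ and $P_1(\hat T)+\Lambda(\hat T)$ are finite-dimensional, injectivity gives $\norm{\hat\mu}_{L^2(\partial\hat T)}\sim\norm{\hat{\mathcal R}\hat\mu}_{\hat T}$ and, modulo the kernel of $|\cdot|_{1,\hat T}$ (constants), $\vertiii{\hat\mu}_{h,\partial\hat T}\sim|\hat{\mathcal R}\hat\mu|_{1,\hat T}$. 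For the seminorm one must check that $\hat{\mathcal R}$ maps the constant-trace functions — whose $\vertiii{\cdot}_{h,\partial\hat T}$ seminorm vanishes — exactly to constants on $\hat T$, and is an isomorphism on the quotient by constants; this follows since $\hat{\mathcal R}$ of a constant on $\partial\hat T$ is that same constant (as $\Pi^{CR}$ reproduces constants and $\mathcal S$ of zero is zero).

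\textbf{Step 3: Scale back.} Transferring Step 2 to $T$ via $F_T$: $\norm{\mathcal R\mu}_T^2 = |\det B_T|\,\norm{\hat{\mathcal R}\hat\mu}_{\hat T}^2 \sim h_T^d\norm{\hat\mu}_{L^2(\partial\hat T)}^2 \sim h_T^d\cdot h_T^{-(d-1)}\norm{\mu}_{\partial T}^2 = h_T\norm{\mu}_{\partial T}^2 = \norm{\mu}_{h,\partial T}^2$, giving \eqref{65322}. For \eqref{65323}, $|\mathcal R\mu|_{1,T}^2 \sim h_T^{d-2}|\hat{\mathcal R}\hat\mu|_{1,\hat T}^2 \sim h_T^{d-2}\vertiii{\hat\mu}_{h,\partial\hat T}^2$, and since $\vertiii{\mu}_{h,\partial T}^2 = h_T^{-1}\norm{\mu-m_T(\mu)}_{\partial T}^2 \sim h_T^{-1}\cdot h_T^{d-1}\vertiii{\hat\mu}_{h,\partial\hat T}^2 = h_T^{d-2}\vertiii{\hat\mu}_{h,\partial\hat T}^2$ (the mean operator $m_T$ also being affine-covariant), the two sides match. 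The main obstacle is Step 2 — specifically verifying the injectivity of $\hat{\mathcal R}$ and that its kernel/cokernel behavior on the quotient by constants is exactly right; once that finite-dimensional fact is in hand, Steps 1 and 3 are routine affine scaling that the authors correctly summarize as "standard scaling arguments."
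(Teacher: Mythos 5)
Your proposal is correct and follows exactly the route the paper intends: the paper's entire proof of this lemma is the remark that it follows from ``standard scaling arguments,'' and you have supplied precisely those arguments --- affine pullback to the reference simplex, a finite-dimensional norm equivalence there resting on the injectivity of $\mathcal R$ on $M(\partial \hat T)$ (via preservation of the $P_k(F)$ face moments) and the matching of kernels modulo constants for the seminorm, followed by the scaling factors $h_T^d$, $h_T^{-(d-1)}$, and $h_T^{-2}$. No gaps; your elaboration is more detailed than what the paper records.
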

For any $\mu_h\in M_h^0$, it is obvious that $\mathcal R\mu_h$ satisfies the $0$-th order   weak 
continuity, i.e., $\mathcal R\mu_h$ is continuous at the gravity point of each interior face 
of $\mathcal T_h$. In addition, it holds $\mathcal R\mu_h|_{\partial\Omega} = 0$. Therefore, from 
discrete Poincar\'e-Friedrichs inequalities (\cite{Brenner2003}) we have
\begin{displaymath}
    \norm{\mathcal R\mu_h}\lesssim(\sum_{T\in\mathcal T_h}|\mathcal R\mu_h|^2_{1,T})^{\frac{1}{2}}.
\end{displaymath}
Then it follows 
\begin{displaymath}
    \begin{split}
        \norm{\mu_h}^2_h
        &=\sum_{T\in\mathcal T_h}\norm{\mu_h}^2_{h,\partial T}\\
        &\sim\sum_{T\in\mathcal T_h}\norm{\mathcal R\mu_h}^2_T~~~~~~~~~~\text{(by \eqref{65322})}\\
        &\lesssim\sum_{T\in\mathcal T_h}|\mathcal R\mu_h|^2_{1,T}\\
        &\lesssim\vertiii{\mu_h}^2_h.~~~~~~~~~~~~~~~~~~~~\text{(by \eqref{65323})}\\
    \end{split}
\end{displaymath}
\section{Appendix: Proof of Lemma \ref{lem_basic_inequalities}}\label{append_1}
Denote by $\widehat T$ the referential unit simplex. For any simplex $T$, there exists an 
invertible affine map $F:\widehat T\to T$ with $F(\hat x)=A\hat x+b$ for $\hat x\in\widehat T$, 
$A\in\mathbb R^{d\times d}$ a nonsingular matrix and $b\in\mathbb R^d$. For any $p\in [L^2(T)]^s
(s=1,2,3)$ and $\mu\in L^2(\partial T)$, we understand $\widehat p$ and $\widehat\mu$ by
\begin{eqnarray}
    \widehat p(\widehat x) &=& p(x),\\
    \widehat\mu(\widehat x) &=& \mu(x),
\end{eqnarray}
where $x = F(\hat x)$ for $\hat x\in  \widehat T$.
\par
We state two well-known results as follows \cite{Ciarlet1978}: 
\begin{eqnarray}
    \norm{A} &\sim& h_T,\label{A}\\
    \norm{A^{-1}} &\sim& h_T^{-1},\label{invA}
\end{eqnarray}
where the matrix norm $\norm{\cdot}:\mathbb R^{d\times d}\to\mathbb R$ is defined by
\begin{equation}
    \norm{A}=\max_{0\neq x\in\mathbb R^d}\frac{\norm{Ax}}{\norm{x}},~\forall A \in\mathbb R^{d\times d}.
\end{equation}
Based on the above two results, it's straightforward to obtain
\begin{equation}\label{invATx}
    \norm{A^{-T}x} \sim h_T^{-1} \norm{x},~\forall x\in\mathbb R^d.
\end{equation}
\par
Using the same techniques as in the proof the properties of the famous Piola transformation (\cite{LNIM}), 
we easily obtain the lemma below.
\begin{lem}\label{transformation}
    For any $(v,\mu)\in L^2(T)\times L^2(\partial T)$, it holds
    \begin{eqnarray}
        \widehat\bmnabla^b_w\widehat\mu &=& A^{T}\widehat{\bmnabla^b_w\mu},\label{transfer_1}\\
        \widehat\bmnabla^i_w\widehat v &=& A^{T}\widehat{\bmnabla^i_w v},\label{transfer_2}\\
        \widehat\bmnabla_w(\widehat v, \widehat \mu) &=& A^{T}\widehat{\bmnabla_w(v,\mu)}.\label{transfer_3}
    \end{eqnarray}
\end{lem}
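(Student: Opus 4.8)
The plan is to argue exactly as in the classical proof of the mapping properties of the Piola transformation. The only facts about weak gradients that enter are their variational characterizations \eqref{grad-v}--\eqref{grad-mu}, read on both $T$ and $\widehat T$; everything else is the affine change of variables $x=F(\hat x)$. First I would record the two ``Piola identities'' that carry the argument. Given $\hat{\bm q}\in\bm W(\widehat T)$, define $\bm q$ on $T$ by $\bm q(x):=\tfrac{1}{\det A}A\,\hat{\bm q}(\hat x)$; invariance of the local spaces $\bm W(\cdot)$ under this (contravariant Piola) transformation guarantees $\bm q\in\bm W(T)$ and that $\hat{\bm q}\mapsto\bm q$ is a bijection of $\bm W(\widehat T)$ onto $\bm W(T)$. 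For such a pair, Nanson's formula for the oriented surface element gives
\[
  \bint{\mu,\bm q\cdot\bm n}{\partial T}=\bint{\hat\mu,\hat{\bm q}\cdot\hat{\bm n}}{\partial\widehat T}
\]
(assuming $\det A>0$, which can be arranged), while the chain rule gives the divergence scaling $(\mathrm{div}\,\bm q)(x)=\tfrac{1}{\det A}(\widehat{\mathrm{div}}\,\hat{\bm q})(\hat x)$. The matrix bounds \eqref{A}--\eqref{invATx} are not needed here; they enter only afterwards, when these identities are turned into the norm equivalences of Lemma \ref{lem_basic_inequalities}.

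For \eqref{transfer_1}, fix an arbitrary $\hat{\bm q}\in\bm W(\widehat T)$ with partner $\bm q\in\bm W(T)$. Testing the definition \eqref{grad-mu} of $\bmnabla^b_w\mu$ on $T$ against $\bm q$, applying the Nanson identity above, and changing variables in the volume integral ($\int_T g\,h\,dx=|\det A|\int_{\widehat T}\hat g\,\hat h\,d\hat x$), we obtain
\[
  \begin{aligned}
  \bint{\hat\mu,\hat{\bm q}\cdot\hat{\bm n}}{\partial\widehat T}
  &=\bint{\mu,\bm q\cdot\bm n}{\partial T}
  =(\bmnabla^b_w\mu,\bm q)_T\\
  &=\int_{\widehat T}\widehat{\bmnabla^b_w\mu}\cdot A\hat{\bm q}\,d\hat x
  =\big(A^{T}\widehat{\bmnabla^b_w\mu},\hat{\bm q}\big)_{\widehat T}.
  \end{aligned}
\]
Hence $A^{T}\widehat{\bmnabla^b_w\mu}$ obeys the defining relation \eqref{grad-mu} of $\widehat\bmnabla^b_w\hat\mu$ against every $\hat{\bm q}\in\bm W(\widehat T)$; since it also lies in $\bm W(\widehat T)$, uniqueness of the discrete weak gradient yields \eqref{transfer_1}. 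Identity \eqref{transfer_2} is obtained in the same way, using \eqref{grad-v} instead of \eqref{grad-mu} and the divergence scaling instead of Nanson's formula. Finally \eqref{transfer_3} follows from \eqref{grad-v-mu} by adding \eqref{transfer_1} and \eqref{transfer_2}, since pull-back is linear.

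I expect the boundary bookkeeping behind the Nanson identity to be the step that needs most care: one must match the outward unit normals of $T$ and $\widehat T$ face by face and track the surface Jacobian correctly --- this is exactly ``the same techniques as in the Piola proof'' --- with the orientation sign absorbed by the assumption $\det A>0$. A secondary point, trivial for the BDM choice $\bm W(T)=[P_k(T)]^d$ but requiring a genuine verification for the RT choice $\bm W(T)=[P_k(T)]^d+P_k(T)\bm x$, is the membership $A^{T}\widehat{\bmnabla^i_w v},\,A^{T}\widehat{\bmnabla^b_w\mu}\in\bm W(\widehat T)$ that makes the uniqueness argument applicable.
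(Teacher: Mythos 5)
Your proposal follows exactly the route the paper intends: the paper offers no written proof of Lemma \ref{transformation} beyond the remark that it follows ``by the same techniques as for the Piola transformation'' (\cite{LNIM}), and your expansion --- contravariant Piola pullback of the test functions, the Nanson boundary identity, the divergence scaling, and uniqueness of the Riesz representer in $\bm W(\widehat T)$ --- is precisely that argument. For the Type 2 (BDM) choice $\bm W(T)=[P_k(T)]^d$ it is complete and correct, since $A^{T}\widehat{\bmnabla^b_w\mu}$ and $A^{T}\widehat{\bmnabla^i_w v}$ obviously lie in $[P_k(\widehat T)]^d$.

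The point you set aside as ``requiring a genuine verification'' for the Type 1 (RT) choice is, however, not a formality: it is the step that fails. For $\bm W(T)=[P_k(T)]^d+P_k(T)\bm x$ the covariant map $\bm p\mapsto A^{T}\,\bm p\circ F$ does \emph{not} send $\bm W(T)$ into $\bm W(\widehat T)$ for a general affine $F$. Writing $\bmnabla^b_w\mu=\bm p_0+p_1\bm x$ with $\bm p_0\in[P_k(T)]^d$ and $p_1\in P_k(T)$, one finds
\begin{equation*}
A^{T}\widehat{\bmnabla^b_w\mu}(\hat x)=A^{T}\bm p_0(F(\hat x))+p_1(F(\hat x))\,A^{T}b+p_1(F(\hat x))\,A^{T}A\,\hat x,
\end{equation*}
and the last term belongs to $[P_k(\widehat T)]^d+P_k(\widehat T)\hat x$ only when $A^{T}A$ is a scalar multiple of the identity; already for $k=0$, $d=2$ the field $(\hat x_2,\hat x_1)$ is not in $\bm W(\widehat T)$. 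Since your computation only establishes that $A^{T}\widehat{\bmnabla^b_w\mu}$ has the same $L^2(\widehat T)$-inner products against all of $\bm W(\widehat T)$ as $\widehat\bmnabla^b_w\widehat\mu$ does, without the membership you may conclude only that $\widehat\bmnabla^b_w\widehat\mu$ is the $L^2(\widehat T)$-orthogonal projection of $A^{T}\widehat{\bmnabla^b_w\mu}$ onto $\bm W(\widehat T)$, not the asserted equality \eqref{transfer_1} (and likewise for \eqref{transfer_2}--\eqref{transfer_3}). This is equally a gap in the lemma as stated in the paper; note that the projection form still yields one of the two inequalities in each equivalence of Lemma \ref{lem_basic_inequalities} immediately, but the reverse inequalities then require an additional argument (or a restatement of the lemma via the contravariant Piola pullback, under which the RT space is genuinely invariant).
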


\begin{lem}\label{append_lbj}
    For any simplex T, there exist two positive constants $c_T$ and $C_T$, which only depend 
    on T and $k$, such that
    \begin{equation}
        c_T\norm{\mu}_{\partial T} \leqslant \norm{\bmnabla^b_w\mu}_T \leqslant C_T\norm{\mu}_{\partial T},~\forall
        \mu\in M(\partial T).
    \end{equation}
\end{lem}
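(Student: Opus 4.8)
The plan is to establish the equivalence of the norms $\norm{\mu}_{\partial T}$ and $\norm{\bmnabla^b_w\mu}_T$ on the finite-dimensional space $M(\partial T)$ by a compactness argument, exploiting the fact that $\bmnabla^b_w:M(\partial T)\to\bm W(T)$ is injective. First I would observe that the inequality $\norm{\bmnabla^b_w\mu}_T\leqslant C_T\norm{\mu}_{\partial T}$ follows directly from the defining relation \eqref{grad-mu}: choosing $\bm q=\bmnabla^b_w\mu$ gives $\norm{\bmnabla^b_w\mu}_T^2=\bint{\mu,\bmnabla^b_w\mu\cdot\bm n}{\partial T}\leqslant\norm{\mu}_{\partial T}\norm{\bmnabla^b_w\mu\cdot\bm n}{\partial T}$, and since $\bm W(T)$ is a fixed finite-dimensional polynomial space on $T$, a trace inequality (or equivalence of norms on finite-dimensional spaces) bounds $\norm{\bmnabla^b_w\mu\cdot\bm n}{\partial T}$ by $C_T\norm{\bmnabla^b_w\mu}_T$; dividing through yields the upper bound.

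For the lower bound $c_T\norm{\mu}_{\partial T}\leqslant\norm{\bmnabla^b_w\mu}_T$, the key fact is that the map $\mu\mapsto\bmnabla^b_w\mu$ is injective on $M(\partial T)$ for both Type 1 and Type 2 choices of $\bm W(T)$ and $M(F)$: if $\bint{\mu,\bm q\cdot\bm n}{\partial T}=0$ for all $\bm q\in\bm W(T)$, then because the normal traces $\bm q\cdot\bm n|_{\partial T}$ exhaust the spaces $M(F)$ on each face (this is precisely the compatibility built into the RT and BDM constructions), one gets $\mu=0$. Granting injectivity, both $\mu\mapsto\norm{\mu}_{\partial T}$ and $\mu\mapsto\norm{\bmnabla^b_w\mu}_T$ are norms on the finite-dimensional space $M(\partial T)$, hence equivalent, which gives the constant $c_T>0$. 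Alternatively, one can argue directly: the quantity $\inf\{\norm{\bmnabla^b_w\mu}_T : \mu\in M(\partial T),\ \norm{\mu}_{\partial T}=1\}$ is the infimum of a continuous positive function over a compact set, hence attained and strictly positive.

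The main obstacle is verifying the injectivity of $\bmnabla^b_w$ on $M(\partial T)$, i.e.\ that the normal-trace map from $\bm W(T)$ onto $\prod_{F}M(F)$ is surjective. For Type 1, $\bm W(T)=[P_k(T)]^d+P_k(T)\bm x$ is the local RT space $RT_k(T)$, whose normal trace on each face $F$ is exactly $P_k(F)=M(F)$, so surjectivity is the classical degrees-of-freedom property of RT elements. For Type 2, $\bm W(T)=[P_k(T)]^d$ is the BDM space, whose normal trace on each face is again $P_k(F)=M(F)$ by the standard BDM degree-of-freedom count. So the obstacle is really just invoking these known facts correctly for each element type; once they are in hand the lemma is immediate. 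Note also that the constants $c_T,C_T$ genuinely depend on the shape of $T$ (not just on $k$) through the trace and scaling constants — this is why the lemma is stated for a fixed simplex $T$, and the passage to an $h$-uniform estimate on a quasi-uniform mesh (as used in \eqref{basic_c}) is handled separately in Appendix \ref{append_1} via the reference-element transformation of Lemma \ref{transformation}.
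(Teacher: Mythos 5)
Your proposal is correct and follows essentially the same route as the paper: the paper's proof likewise shows that $\bmnabla^b_w\mu=0$ forces $\bint{\mu,\bm q\cdot\bm n}{\partial T}=0$ for all $\bm q\in\bm W(T)$ and hence $\mu=0$, so that $\norm{\bmnabla^b_w\,\cdot\,}_T$ is a norm on the finite-dimensional space $M(\partial T)$, and then invokes equivalence of norms. Your additional details (the explicit Cauchy--Schwarz argument for the upper bound and the surjectivity of the RT/BDM normal-trace map underlying injectivity) merely flesh out what the paper leaves implicit.
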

\begin{proof}
    Assuming $\bmnabla^b_w \mu = 0$, by the definition of $\bmnabla^b_w$,  i.e. \eqref{grad-mu} we have
    \begin{displaymath}
        \langle \mu, \bm q \cdot \bm n\rangle_{\partial T} = 0,~\forall \bm q \in\bm W(T),
    \end{displaymath}
    which implies $\mu = 0$. This means the semi-norm $\norm{\bmnabla^b_w \cdot}_T$ is a norm on $M(\partial T)$. 
    Since different norms on a finite dimensional space are equivalent, this lemma follows immediately.
\end{proof}
\begin{thm}\label{B1}
    For any simplex T, it holds
    \begin{equation}
        \norm{\bmnabla^b_w\mu}_T \sim h_T^{-1}\norm{\mu}_{h, \partial T},~\forall \mu \in M(\partial T).
    \end{equation}
\end{thm}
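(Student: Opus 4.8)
The plan is to transfer the problem to the reference simplex $\widehat T$, use the already-established scaling relations there, and then pull everything back. First I would fix a simplex $T$ together with its affine map $F(\hat x)=A\hat x+b$. For $\mu\in M(\partial T)$, apply the transformation identity \eqref{transfer_1} from Lemma \ref{transformation}, namely $\widehat{\bmnabla^b_w\widehat\mu} = A^{T}\widehat{\bmnabla^b_w\mu}$, which rearranges to $\widehat{\bmnabla^b_w\mu} = A^{-T}\widehat{\bmnabla^b_w\widehat\mu}$. Taking $L^2$ norms over $\widehat T$ and invoking \eqref{invATx} (and its obvious reverse), we get $\norm{\widehat{\bmnabla^b_w\mu}}_{\widehat T} \sim h_T^{-1}\norm{\widehat{\bmnabla^b_w\widehat\mu}}_{\widehat T}$. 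Then a standard Jacobian change of variables in the volume integral gives $\norm{\bmnabla^b_w\mu}_T \sim |\det A|^{1/2}\norm{\widehat{\bmnabla^b_w\mu}}_{\widehat T}$ with $|\det A|\sim h_T^d$ on a quasi-uniform (or shape-regular) mesh.

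Next I would handle the reference-simplex term $\norm{\widehat{\bmnabla^b_w\widehat\mu}}_{\widehat T}$. Here the key point is that $\widehat\mu\in M(\partial\widehat T)$ and the reference simplex $\widehat T$ is fixed, so Lemma \ref{append_lbj} applies verbatim: there are constants $c_{\widehat T}, C_{\widehat T}$ depending only on $\widehat T$ and $k$ with $\norm{\widehat{\bmnabla^b_w\widehat\mu}}_{\widehat T} \sim \norm{\widehat\mu}_{\partial\widehat T}$. Finally, a change of variables on the boundary gives $\norm{\widehat\mu}_{\partial\widehat T}\sim h_T^{-(d-1)/2}\norm{\mu}_{\partial T}$, since the $(d-1)$-dimensional Hausdorff measures of the faces scale like $h_T^{d-1}$.

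Chaining these equivalences together yields
\begin{displaymath}
    \norm{\bmnabla^b_w\mu}_T \sim h_T^{d/2}\,\norm{\widehat{\bmnabla^b_w\mu}}_{\widehat T}
    \sim h_T^{d/2-1}\,\norm{\widehat\mu}_{\partial\widehat T}
    \sim h_T^{d/2-1}\,h_T^{-(d-1)/2}\,\norm{\mu}_{\partial T}
    = h_T^{-1/2}\,\norm{\mu}_{\partial T},
\end{displaymath}
and recalling the definition $\norm{\mu}_{h,\partial T} = h_T^{1/2}\norm{\mu}_{\partial T}$, this is exactly $\norm{\bmnabla^b_w\mu}_T \sim h_T^{-1}\norm{\mu}_{h,\partial T}$, as claimed.

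I do not expect a serious obstacle here; the theorem is essentially a bookkeeping exercise in affine scaling once Lemmas \ref{transformation} and \ref{append_lbj} are in hand. The one point requiring slight care is the power of $|\det A|$ versus $\norm{A}$ in the two changes of variables — the volume integral over $T$ contributes $|\det A|^{1/2}\sim h_T^{d/2}$, while the surface integral over $\partial T$ contributes a factor scaling like $h_T^{(d-1)/2}$ per face — so I would make sure the exponents $d/2 - 1 - (d-1)/2 = -1/2$ combine correctly to cancel the ambient dimension $d$ and leave the clean $h_T^{-1/2}$ (equivalently $h_T^{-1}$ after absorbing the $h_T^{1/2}$ in the definition of $\norm{\cdot}_{h,\partial T}$). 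Shape-regularity of $\mathcal T_h$ is what makes $|\det A|\sim h_T^d$ and $\norm{A}\sim\norm{A^{-1}}^{-1}\sim h_T$ hold simultaneously, so the hidden constants depend only on the shape-regularity parameter and on $k$.
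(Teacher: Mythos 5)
Your proposal is correct and follows essentially the same route as the paper's proof: map to the reference simplex, apply the transformation identity of Lemma \ref{transformation} together with \eqref{invATx}, invoke the fixed-element norm equivalence of Lemma \ref{append_lbj} on $\widehat T$, and track the volume and surface Jacobian factors, which combine to $h_T^{-1/2}\norm{\mu}_{\partial T}=h_T^{-1}\norm{\mu}_{h,\partial T}$ exactly as in the paper. No gaps.
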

\begin{proof}	
    In view of $T = A\widehat T+b$,  we have
    \begin{displaymath}
        \begin{split}
            \norm{\bmnabla^b_w\mu}_T 
            &\sim h_T^{\frac{d}{2}} \norm{\widehat{\bmnabla^b_w\mu}}_{\widehat T}\\
            &\sim h_T^{\frac{d}{2}}\norm{A^{-T}\widehat\bmnabla^b_w\widehat\mu}_{\widehat T}~~~~~~~~~~~~~~~~~~~~~~~
            \text{(by Lemma \ref{transformation})}\\
            &\sim h_T^{\frac{d}{2}-1}\norm{\widehat\bmnabla^b_w\widehat\mu}_{\widehat T}~~~~~~~~~~~~~~~~~~~~~~~~~~
            \text{(by \eqref{invATx})}\\
            &\sim h_T^{\frac{d}{2}-1}\norm{\widehat\mu}_{\partial\widehat T}~~~~~~~~~~~~~~~~~~~~~~~~~~~~~
            \text{(by Lemma \ref{append_lbj})}\\
            &\sim h_T^{-\frac{1}{2}}\norm{\mu}_{\partial T}\\
            &\sim h_T^{-1}\norm{\mu}_{h,\partial T}.
        \end{split}
    \end{displaymath}
\end{proof}
Similarly, we can easily prove the following theorem.
\begin{thm}
    For any simplex T, it holds
    \begin{equation}
        \norm{\bmnabla^i_w v}_T \sim h_T^{-1}\norm{v}_T,~\forall v \in V(T).
    \end{equation}
\end{thm}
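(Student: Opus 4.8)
The plan is to mirror the proof of Theorem \ref{B1} almost verbatim, replacing the boundary weak gradient $\bmnabla^b_w$ and the face space $M(\partial T)$ by the interior weak gradient $\bmnabla^i_w$ and the element space $V(T)$. First I would establish the analogue of Lemma \ref{append_lbj} on the reference simplex: the map $v\mapsto\norm{\bmnabla^i_w v}_{\widehat T}$ is in fact a norm on the finite-dimensional space $V(\widehat T)$, hence equivalent to $\norm{\cdot}_{\widehat T}$ with constants depending only on $\widehat T$ and $k$. Indeed, if $\bmnabla^i_w v=0$ then the defining relation \eqref{grad-v} gives $(v,\mathrm{div}\,\bm q)_{\widehat T}=0$ for every $\bm q\in\bm W(\widehat T)$; since for both Type~1 and Type~2 the divergence operator maps $\bm W(\widehat T)$ onto $V(\widehat T)$ (one has $\mathrm{div}([P_k]^d+P_k\bm x)=P_k$ and $\mathrm{div}([P_k]^d)=P_{k-1}$), we may choose $\bm q$ with $\mathrm{div}\,\bm q=v$ and deduce $\norm{v}_{\widehat T}^2=0$, i.e.\ $v=0$.

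With this norm equivalence in hand, the result follows from the same affine-scaling chain used for Theorem \ref{B1}. Writing $T=A\widehat T+b$, standard change-of-variables estimates give $\norm{\bmnabla^i_w v}_T\sim h_T^{d/2}\norm{\widehat{\bmnabla^i_w v}}_{\widehat T}$; the Piola-type identity \eqref{transfer_2}, $\widehat\bmnabla^i_w\widehat v=A^T\widehat{\bmnabla^i_w v}$, turns this into $h_T^{d/2}\norm{A^{-T}\widehat\bmnabla^i_w\widehat v}_{\widehat T}$; applying \eqref{invATx} yields $\sim h_T^{d/2-1}\norm{\widehat\bmnabla^i_w\widehat v}_{\widehat T}$; the reference-element equivalence just proved gives $\sim h_T^{d/2-1}\norm{\widehat v}_{\widehat T}$; and scaling back via $\norm{v}_T\sim h_T^{d/2}\norm{\widehat v}_{\widehat T}$ gives $\sim h_T^{-1}\norm{v}_T$, as claimed.

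I do not anticipate any real difficulty: the only step that deserves a second thought is the surjectivity of $\mathrm{div}:\bm W(T)\to V(T)$, which is precisely what makes $\norm{\bmnabla^i_w\cdot}_{\widehat T}$ a norm, and this holds by the very construction of the local RT and BDM pairs underlying Type~1 and Type~2. The rest is the identical bookkeeping of powers of $h_T$ already carried out in the proof of Theorem \ref{B1}.
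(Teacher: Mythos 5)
Your proposal is correct and follows essentially the same route as the paper, which proves this statement simply by remarking that it is ``similar'' to Theorem \ref{B1}: a reference-element norm equivalence (the analogue of Lemma \ref{append_lbj}) combined with the affine-scaling chain via Lemma \ref{transformation} and \eqref{invATx}. Your added justification that $\bmnabla^i_w$ is injective on $V(\widehat T)$ via the surjectivity of $\mathrm{div}:\bm W(\widehat T)\to V(\widehat T)$ is exactly the detail the paper leaves implicit, and it is valid for both the Type~1 (RT) and Type~2 (BDM) choices.
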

\begin{lem}\label{appen_lxy}
    For any simplex $T$,      there exist two positive constants $c_T$ and $C_T$ that only depend on T and k, such that 
    \begin{equation}
        c_T(\norm{v}_T + \norm{\mu}_{\partial T}) 
        \leqslant \norm{\bmnabla_w(v,\mu)}_T \leqslant 
        C_T(\norm{v} + \norm{\mu}_{\partial T}),~\forall (v,\mu)\in\Sigma(T),
    \end{equation}
    where 
     $   \Sigma(T) := \{(v,\mu)\in V(T)\times M(\partial T): m_T(\mu) = 0\}.$
\end{lem}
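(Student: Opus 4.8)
The plan is to read this as an equivalence between two norms on the \emph{fixed} finite-dimensional space $\Sigma(T)$, so that no scaling is involved and the constants may legitimately depend on $T$ and on the degree $k$ (the latter through the spaces $V(T),M(\partial T),\bm W(T)$). The right-hand inequality is essentially free: $\bmnabla_w$ is linear from $V(T)\times M(\partial T)$ into $\bm W(T)$ and $(v,\mu)\mapsto\norm{v}_T+\norm{\mu}_{\partial T}$ is visibly a norm on $V(T)\times M(\partial T)$, hence continuity of a linear map on a finite-dimensional space gives $\norm{\bmnabla_w(v,\mu)}_T\le C_T(\norm{v}_T+\norm{\mu}_{\partial T})$. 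The substance is the left-hand inequality, and by the same finite-dimensional argument it reduces to showing that $(v,\mu)\mapsto\norm{\bmnabla_w(v,\mu)}_T$ is actually a \emph{norm} on $\Sigma(T)$, i.e. that the only $(v,\mu)\in\Sigma(T)$ with $\bmnabla_w(v,\mu)=0$ is $(0,0)$.

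To identify this kernel I would unfold the definition: by \eqref{grad-v}--\eqref{grad-mu}, the equation $\bmnabla_w(v,\mu)=\bmnabla_w^i v+\bmnabla_w^b\mu=0$ is equivalent to $(v,\text{div}\,\bm q)_T=\bint{\mu,\bm q\cdot\bm n}{\partial T}$ for every $\bm q\in\bm W(T)$. The argument then proceeds in three steps. First, test only with those $\bm q\in\bm W(T)$ whose normal trace vanishes on $\partial T$; the boundary term drops and $v$ becomes $L^2(T)$-orthogonal to the image of $\text{div}$ on that subspace, which for both the RT choice ($V(T)=P_k$) and the BDM choice ($V(T)=P_{k-1}$) equals $\{p\in V(T):\int_Tp=0\}$ by the standard exactness properties of these elements; since $V(T)$ contains the constants, this forces $v\equiv c$ for some constant $c$. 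Second, put $v\equiv c$ back and use $(c,\text{div}\,\bm q)_T=c\int_{\partial T}\bm q\cdot\bm n=\bint{c,\bm q\cdot\bm n}{\partial T}$ to get $\bint{\mu-c,\bm q\cdot\bm n}{\partial T}=0$ for all $\bm q\in\bm W(T)$; since the normal-trace map $\bm q\mapsto\bm q\cdot\bm n|_{\partial T}$ is onto $\prod_{F\in\mathcal F_T}M(F)=M(\partial T)$ for both RT and BDM, and $\mu-c\in M(\partial T)$, this yields $\mu\equiv c$ on $\partial T$. Third, invoke the defining constraint of $\Sigma(T)$: $m_T(\mu)=\frac{1}{d+1}\sum_{F\in\mathcal F_T}\frac{1}{|F|}\int_Fc=c$, so $m_T(\mu)=0$ gives $c=0$, whence $v=0$ and $\mu=0$.

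Once the kernel is trivial on $\Sigma(T)$, both $\norm{\bmnabla_w(v,\mu)}_T$ and $\norm{v}_T+\norm{\mu}_{\partial T}$ are norms on the finite-dimensional space $\Sigma(T)$, and their equivalence delivers the constants $c_T,C_T$, which by construction depend only on the shape of $T$ and on $k$. The step I expect to require the most care is the first one: pinning down that the divergence of the zero-normal-trace part of $\bm W(T)$ is exactly the mean-zero subspace of $V(T)$, and that the normal-trace map is surjective onto $M(\partial T)$, and checking that this holds in the parallel way for the RT (Type 1) and the BDM (Type 2) cases; these are classical facts about $RT_k$ and $BDM_k$ on a simplex, but they must be stated precisely and verified in both cases. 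Everything else is bookkeeping.
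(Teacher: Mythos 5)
Your proof is correct and follows essentially the same route as the paper's: both reduce the lemma to showing that $\bmnabla_w$ has trivial kernel on $\Sigma(T)$ (so that $(v,\mu)\mapsto\norm{\bmnabla_w(v,\mu)}_T$ is a norm there) and then invoke equivalence of norms on a finite-dimensional space. The only difference is that you spell out the exactness and normal-trace surjectivity properties of the RT and BDM elements that the paper merely cites, which is an elaboration of the same step rather than a different argument.
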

\begin{proof}
    It's easy to know 
    \begin{displaymath}
        (v, \mu) \mapsto \norm{v}_T + \norm{\mu}_{\partial T},~\forall (v, \mu) \in \Sigma(T)
    \end{displaymath}
    defines a norm on $\Sigma(T)$.

    Next we  show  \begin{displaymath}
        (v, \mu) \to \norm{\bmnabla_w(v, \mu)}_T,~\forall (v, \mu) \in \Sigma(T)
    \end{displaymath}
    also defines a norm on $\Sigma(T)$.
    In fact, if $\norm{\bmnabla_w(v, \mu)}_T = 0$, then by the definition of $\bmnabla_w$, i.e. \eqref{grad-v-mu} 
    we have
    \begin{equation}
        (\bmnabla v, \bm q)_T + \langle \mu - v, \bm q \cdot \bm n\rangle_{\partial T} = 0,
        ~\forall \bm q \in\bm W(T).
    \end{equation}
   This relation, together with the properties of the BDM elements (\cite{BrezziDouglasMarini1985}) and the RT elements \cite{RT}, shows
$        v = \mu = \text{constant}.$ Thus the relation 
      $m_T(\mu) = 0$   leads to $(v, \mu) = 0$. 
   
   Finally, the desired conclusion follows from the equivalence of the above two norms.
\end{proof}

\begin{lem}\label{append_lbj_lxy}
    For any simplex T, it holds
    \begin{equation}
        \norm{\bmnabla_w(v,\mu)}_T \sim h_T^{-1}\norm{v}_T + h_T^{-\frac{1}{2}}\norm{\mu}_{\partial T},
        ~\forall (v,\mu)\in\Sigma(T).
    \end{equation}
\end{lem}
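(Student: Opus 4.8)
The plan is to obtain this lemma as a pure scaling corollary of the reference-element norm equivalence in Lemma~\ref{appen_lxy}, transported to $T$ by the affine map $F:\widehat T\to T$ and the Piola-type identity of Lemma~\ref{transformation}, exactly in the spirit of the proof of Theorem~\ref{B1}. The one preliminary point is that the constraint defining $\Sigma(T)$ is affine-invariant: for each face $F$ of $T$ the Jacobian of the restricted affine map cancels between numerator and denominator, so $\frac{1}{|F|}\int_F\mu=\frac{1}{|\widehat F|}\int_{\widehat F}\widehat\mu$, whence $m_T(\mu)=m_{\widehat T}(\widehat\mu)$ and $(v,\mu)\in\Sigma(T)$ if and only if $(\widehat v,\widehat\mu)\in\Sigma(\widehat T)$. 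This is what licenses applying Lemma~\ref{appen_lxy} on $\widehat T$ to the pullback pair.

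The main computation is then a chain of equivalences. First, by the change-of-variables formula together with \eqref{A}, $\norm{\bmnabla_w(v,\mu)}_T\sim h_T^{d/2}\,\norm{\widehat{\bmnabla_w(v,\mu)}}_{\widehat T}$. Next, from \eqref{transfer_3} we have $\widehat{\bmnabla_w(v,\mu)}=A^{-T}\widehat\bmnabla_w(\widehat v,\widehat\mu)$, so \eqref{invATx} gives $\norm{\widehat{\bmnabla_w(v,\mu)}}_{\widehat T}\sim h_T^{-1}\norm{\widehat\bmnabla_w(\widehat v,\widehat\mu)}_{\widehat T}$. Since $(\widehat v,\widehat\mu)\in\Sigma(\widehat T)$, Lemma~\ref{appen_lxy} on the fixed simplex $\widehat T$ yields $\norm{\widehat\bmnabla_w(\widehat v,\widehat\mu)}_{\widehat T}\sim \norm{\widehat v}_{\widehat T}+\norm{\widehat\mu}_{\partial\widehat T}$. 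Finally, scaling back with $\norm{\widehat v}_{\widehat T}\sim h_T^{-d/2}\norm{v}_T$ and $\norm{\widehat\mu}_{\partial\widehat T}\sim h_T^{-(d-1)/2}\norm{\mu}_{\partial T}$ and collecting the powers of $h_T$ (the factor $h_T^{d/2-1}$ multiplying the bracketed terms produces $h_T^{-1}\norm{v}_T$ and $h_T^{-1/2}\norm{\mu}_{\partial T}$ respectively) gives the claimed relation. Both WG types are covered at once, since Lemma~\ref{appen_lxy} already treats the RT and BDM cases simultaneously.

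There is essentially no obstacle beyond this bookkeeping: all analytic content — in particular ruling out the one-dimensional kernel of $\bmnabla_w$ by unisolvence of the RT/BDM degrees of freedom — is already packaged in Lemma~\ref{appen_lxy}. The only step needing a line of justification is the affine-invariance of $\Sigma(\cdot)$ noted above; once that is in place the $\sim$-chain composes routinely, and I would present the argument as a single displayed string of equivalences mirroring the proof of Theorem~\ref{B1}.
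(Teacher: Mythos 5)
Your proposal is correct and coincides with the paper's own proof: the paper likewise transports the problem to the reference simplex via the affine map and Lemma~\ref{transformation}, invokes the norm equivalence of Lemma~\ref{appen_lxy} on $\widehat T$ (using $m_T(\mu)=m_{\widehat T}(\widehat\mu)$ to keep the pullback in $\Sigma(\widehat T)$), and scales back, exactly as you describe. The power-of-$h_T$ bookkeeping in your chain matches the paper's displayed string of equivalences.
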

\begin{proof}
    In light  of  $T = A\widehat T+b$ and  $m_T(\mu) = m_{\widehat T}(\widehat\mu)$ for all $\mu\in L^2(\partial T)$, we obtain
    \begin{displaymath}
        \begin{split}
            \norm{\bmnabla_w(v, \mu)}_T 
            &\sim h_T^{\frac{d}{2}}\norm{\widehat{\bmnabla_w(v,\mu)}}_{\widehat T}\\
            &\sim h_T^{\frac{d}{2}}\norm{A^{-T}\widehat\bmnabla_w(\hat v,\hat\mu)}_{\widehat T}
            ~~~~~~~~~~~~~~~~~~~~\text{(by Lemma \ref{transformation})}\\
            &\sim h_T^{\frac{d}{2}-1}\norm{\widehat\bmnabla_w(\hat v,\hat\mu)}_{\widehat T}
            ~~~~~~~~~~~~~~~~~~~~~~~\text{(by \eqref{invATx})}\\
            &\sim h_T^{\frac{d}{2}-1}(\norm{\widehat v}_{\widehat T} + \norm{\widehat \mu}_{\partial\widehat T})
            ~~~~~~~~~~~~~~~~~~~~~\text{(by Lemma \ref{appen_lxy})}\\
            &\sim h_T^{-1}\norm{v}_T + h_T^{-\frac{1}{2}}\norm{\mu}_{\partial T}.
        \end{split} 
    \end{displaymath}
  \end{proof}
\begin{thm}\label{B3}
    For any simplex T, it holds
    \begin{equation}
        \norm{\bmnabla_w(v, \mu)}_T \sim h_T^{-1}\norm{v-m_T(\mu)}_T + |{\mu}|_{h, \partial T},
        ~\forall (v,\mu)\in V(T)\times M(\partial T).
    \end{equation}
\end{thm}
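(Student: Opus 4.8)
The plan is to reduce the general pair $(v,\mu)\in V(T)\times M(\partial T)$ to the constrained case already settled by Lemma \ref{append_lbj_lxy}, which gives the equivalence on $\Sigma(T)=\{(v,\mu)\in V(T)\times M(\partial T):m_T(\mu)=0\}$. The bridge is the fact that the weak gradient annihilates constant pairs: $\bmnabla_w(c,c)=0$ for any constant $c$, so by the linearity of $\bmnabla_w^i$ and $\bmnabla_w^b$ one has $\bmnabla_w(v-c,\mu-c)=\bmnabla_w(v,\mu)$ for every constant $c$.

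First I would verify $\bmnabla_w(c,c)=0$ directly from the definitions \eqref{grad-v-mu}--\eqref{grad-mu}: for every $\bm q\in\bm W(T)$,
$$(\bmnabla_w^i c+\bmnabla_w^b c,\bm q)_T=-(c,\mathrm{div}\,\bm q)_T+\bint{c,\bm q\cdot\bm n}{\partial T}=c\left(\int_{\partial T}\bm q\cdot\bm n-\int_T\mathrm{div}\,\bm q\right)=0$$
by the divergence theorem; since $\bmnabla_w(c,c)\in\bm W(T)$, this forces $\bmnabla_w(c,c)=0$. Note the constant function lies in $V(T)$ and in every $M(F)$ for both Type 1 and Type 2, so $v-c\in V(T)$ and $\mu-c\in M(\partial T)$ are legitimate arguments. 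Hence $\bmnabla_w(v,\mu)=\bmnabla_w(v-c,\mu-c)$ for any constant $c$.

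Next I would take $c:=m_T(\mu)$. Since $m_T$ reproduces constants ($m_T(c)=c$, immediate from the definition of $m_T$), the shifted pair satisfies $m_T(\mu-m_T(\mu))=0$ and therefore lies in $\Sigma(T)$. Applying Lemma \ref{append_lbj_lxy} to this pair and using the translation identity,
$$\norm{\bmnabla_w(v,\mu)}_T=\norm{\bmnabla_w\bigl(v-m_T(\mu),\,\mu-m_T(\mu)\bigr)}_T\sim h_T^{-1}\norm{v-m_T(\mu)}_T+h_T^{-\frac{1}{2}}\norm{\mu-m_T(\mu)}_{\partial T}.$$
Finally, since $h_T^{-\frac{1}{2}}\norm{\mu-m_T(\mu)}_{\partial T}=\left(h_T^{-1}\norm{\mu-m_T(\mu)}_{\partial T}^2\right)^{1/2}=\vertiii{\mu}_{h,\partial T}$ by the definition of $|\cdot|_{h,\partial T}$, this is exactly the claimed equivalence.

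There is no serious obstacle here: the argument is essentially the single observation that $\bmnabla_w$ is invariant under adding a constant pair $(c,c)$ to $(v,\mu)$, followed by the bookkeeping identification of $h_T^{-1/2}\norm{\mu-m_T(\mu)}_{\partial T}$ with $\vertiii{\mu}_{h,\partial T}$. The only point that genuinely needs checking is that the constant $m_T(\mu)$, viewed as a function, belongs to the local spaces $V(T)$ and $M(F)$, which holds for both element types since all of them contain $P_0$.
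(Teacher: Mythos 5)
Your proof is correct and follows essentially the same route as the paper: both reduce to the constrained case $\Sigma(T)$ via the translation $(v,\mu)\mapsto(v-m_T(\mu),\mu-m_T(\mu))$, justified by the divergence theorem, and then invoke Lemma \ref{append_lbj_lxy} together with the identification $h_T^{-1/2}\norm{\mu-m_T(\mu)}_{\partial T}=\vertiii{\mu}_{h,\partial T}$. Your version merely makes the invariance $\bmnabla_w(c,c)=0$ and the membership of constants in $V(T)$ and $M(F)$ explicit, which the paper leaves implicit.
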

\begin{proof}
    By \eqref{grad-v-mu} we have 
    \begin{displaymath}
        \begin{split}
            (\bmnabla_w(v, \mu),\bm q)_T 
            &= -(v,div~\bm q)_T+\langle\mu,\bm q\cdot \bm n\rangle_{\partial T}\\
            &= -(v - m_T(\mu),div~\bm q)_T + \langle\mu-m_T(\mu),\bm q\cdot\bm n\rangle_{\partial T}\\
            &=(\bmnabla_w(v-m_T(\mu), \mu - m_T(\mu)),\bm q)_T,~\forall \bm q \in\bm W(T), 
        \end{split}
    \end{displaymath}
    which implies
    \begin{displaymath}
        \bmnabla_w(v, \mu) = \bmnabla_w(v - m_T(\mu), \mu - m_T(\mu)).
    \end{displaymath}
    Thus it follows
    \begin{displaymath}
        \begin{split}
            \norm{\bmnabla_w(v,\mu)}_T 
            &= \norm{\bmnabla_w(v - m_T(\mu), \mu - m_T(\mu))}_T\\
            &\sim h_T^{-1}\norm{v - m_T(\mu)}_T + h_T^{-\frac{1}{2}}\norm{\mu - m_T(\mu)}_{\partial T}
            ~~~~\text{(by Lemma \ref{append_lbj_lxy})}\\
            &\sim h_T^{-1}\norm{v - m_T(\mu)}_T + |{\mu}|_{h, \partial T}.
        \end{split}
    \end{displaymath}
  \end{proof}
    A combination of Theorems \ref{B1}-\ref{B3} proves Lemma \ref{lem_basic_inequalities}.

\begin{thebibliography}{100}\small
    \bibitem{ADAMS}R. A. ADAMS, J. J. F. FOURNIER, Sobolev Spaces, Academic Press, 2nd ed., 2003.
    \bibitem{ArnoldBrezzi1985}D. N. ARNOLD, F. BREZZI, Mixed and nonconforming finite element methods: implementation, 
    postprocessing and error estimates, RAIRO Mod\' el. Math. Anal.Num\'er., \textbf{19} (1985), 7-32.
    \bibitem{Arnold-Brezzi-Cockburn-Marini}D. N. ARNOLD, F. BREZZI, B. COCKBURN, L. D. MARINI, Unified analysis of 
    discontinuous Galerkin methods for elliptic problems, SIAM J. Numer. Anal., \textbf{39} (2002), 1749-1779.
    \bibitem{LNIM} D. BOFFI, F. BREZZI, L. DEMKOWICZ, R. DURÁN, R. FALK, M. FORTIN, Mixed finite elements, 
    compatibility conditions, and applications, Lecture Notes in Mathematics 939. Springer-Verlag, Berlin, 
    Germany (2008), 12-14.
    \bibitem{Ban1.} R. E. BANK, T. DUPONT, An optimal order process for solving finite element equations, 
    Math. Comp., \textbf{36} (1981), 35-51.
    \bibitem{Ban2.} R. E. BANK, C. C. DOUGLAS, Sharp estimates for multigrid rates of convergence with general smoothing 
    and acceleration, SIAM J, Numer. Anal., \textbf{22} (1985), 617-633.
    \bibitem{Br.} D. BRAESS, W. HACKBUSCH, A new convergence proof for the multigrid method including the V-cycle, 
    SIAM J. Numer. Anal., \textbf{20} (1983), 967-975.
    \bibitem{Bpj3.} J. H. BRAMBLE, J. E. PASCIAK, New convergence estimates for multigrid algorithms,
    Math. Comp., \textbf{49} (1987), 311-329.
    \bibitem{Bpj5} J. H. BRAMBLE, J. E. PASCIAK,  J. XU, The analysis of multigrid algorithms with nonnested spaces or
    noninherited quadratic forms, Math. Comp., \textbf{56} (1991), 1-34.
    \bibitem{Bpj1} J. H. BRAMBLE, J. E. PASCIAK, J. WANG,  J. XU, Convergence estimates for multigrid algorithms 
    without regularity assumptions, Math. Comp., \textbf{57} (1991), 23-45.
    \bibitem{Bp1.}  J. H. BRAMBLE, J. E. PASCIAK, New estimates for multilevel algorithms including the V-cycle, 
    Math. Comp., \textbf{60} (1993), 447-471.
    \bibitem{B2}  A. BRANDT, Multi-level adaptive solutions to boundary-value problems, Math. Comp., 
    \textbf{31} (1977), 333-390.
    \bibitem{mg_p1}S. C. BRENNER, An optimal-order multigrid method for P1 nonconforming finite elements, 
    Math. Comp. \textbf{52} (1989), 1-16.
    \bibitem{Brenner.S1992} S. C. BRENNER, A multigrid algorithm for the lowest-order Raviart-Thomas mixed 
    triangular finite element method, SIAM J. Numer. Anal., \textbf{29} (1992), 647-678.
    \bibitem{Brenner1999} S. C. BRENNER, Convergence of nonconforming multigrid methods without full elliptic 
    regularity. Math. Comp., \textbf{68} (1999), 25-53.
    \bibitem{Brenner2003} S. C. BRENNER, Poincar\'e-Fridrichs inequalities for piecewise $H^1$ functions, 
    SIAM J. Numer. Anal., \textbf{41} (2003), 306-324.
    \bibitem{Brenner2004} S. C. BRENNER, Convergence of nonconforming V-cycle and F-cycle multigrid algorithms 
    for second order elliptic boundary value problems, Math. Comp., \textbf{73} (2004), 1041-1066 (electronic).
    \bibitem{BrezziDouglasMarini1985}F. BREZZI, J. DOUGLAS, JR., L. D. MARINI, Two families of mixed finite 
    elements for second order elliptic problems, Numer. Math., \textbf{47} (1985), 217-235.
    \bibitem{Chen_XZ} L. CHEN, Deriving the X-Z identity from auxiliary space method, Domain Decomposition Methods 
    in Science and Engineering XIX, Lecture Notes in Computational Science and Engineering, \textbf{78} (2011),  309-316.
    \bibitem{XZ2} D. CHO, J. XU, L. ZIKATANOV, New estimates for the rate of convergence of the method of subspace
    corrections, Numer. Math. Theor. Meth. Appl., \textbf{1} (2008), 44-56.
    \bibitem{Ciarlet1978} P. CIARLET, The finite element method for elliptic problems, North-Holland, Armsterdam, 1978.
    \bibitem{Cockburn-GOPALAKRISHNAN-LAZAROV}  B. COCKBURN, J. GOPALAKRISHNAN,  R. LAZAROV, Unified 
    hybridization of discontinuous Galerkin, mixed, and conforming Galerkin methods for second order 
    elliptic problems, SIAM J.Numer. Anal., \textbf{47} (2009), 1319-1365.
    \bibitem{Cockburn.B;2013} B. COCKBURN, O. DUBOIS, J. GOPALAKRISHNAN, S. TAN, Multigrid for an HDG method, IMA Journal 
    of Numerical Analysis, \textbf{34} (2014), 1386-1425.
    \bibitem{DOBREV;2006} V. A. DOBREV, R. D. LAZAROV, P. S. VASSILEVSKI, L. T. ZIKATANOV, Two-level preconditioning
    of discontinuous Galerkin approximations of second-order elliptic equations. Numer. Linear Algebra Appl., \textbf{13} (2006), 753-770.
    \bibitem{Duan;2007} H. Y. DUAN, S. Q. GAO, R. TAN, S. ZHANG, A generalized BPX multigrid framework covering
    nonnested V-cycle methods. Math. Comp., \textbf{76} (2007), 137-152.
    \bibitem{GOPALAKRISHNAN2003} J. GOPALAKRISHNAN, A Schwarz preconditioner for a hybridized mixed method, Computational Methods in Applied Mathematics, 
    \textbf{3} (2003), 116-134.
    \bibitem{GOPALAKRISHNAN;2003} J. GOPALAKRISHNAN,  G. KANSCHAT, A multilevel discontinuous Galerkin method, Numer. Math., \textbf{95} (2003), 527-550.
    \bibitem{GOPALAKRISHNAN2009} J. GOPALAKRISHNAN, S. TAN, A convergent multigrid cycle for the hybridized mixed method, 
    Numer. Linear Algebra Appl., $\bm{16}$ (2009), 689-714.
    \bibitem{H1.}  W. HACKBUSCH, Multi-grid methods and applications, Springer series in computational mathematics, vol. 4,
    Spring-Verlag, BErlin, New York, 1985.
    \bibitem{KRAUS;2008} J. K. KRAUS, S. K. TOMAR,  A multilevel method for discontinuous Galerkin approximation of threedimensional
    anisotropic elliptic problems. Numer. Linear Algebra Appl., \textbf{15} (2008), 417-438.
    \bibitem{KRAUS;2008b} J. K. KRAUS, S. K. TOMAR, Multilevel preconditioning of two-dimensional elliptic problems discretized
    by a class of discontinuous Galerkin methods, SIAM J. Sci. Comput., \textbf{30} (2008), 684-706.
    \bibitem{WangYe2013}J. WANG, X. YE, A weak Galerkin finite element method for second-order elliptic problems, J. Comp. and Appl. Math, 
    \textbf{241} (2013), 103-115.
    \bibitem{WGSTOKES}J. WANG, X. YE, A weak Galerkin finite element method for the Stokes equations, arXiv:1302.2707v1 [math.NA].
    \bibitem{Mu-Wang-Wang-Ye}L. MU, J. WANG, Y. WANG,  X. YE, A computational study of the weak Galerkin method
    for second-order elliptic equations, arXiv:1111.0618v1, 2011, Numerical Algorithms, 2012,
    DOI:10.1007/s11075-012-9651-1.
    \bibitem{WGBIHARMONIC} L. MU, J. WANG, Y. WANG,  X. YE, A weak Galerkin mixed finite element method for biharmonic equations,
    Springer Proceedings in Mathematics \& Statistics, \text{45} (2013), 247-277.
    \bibitem{Mu-Wang-Wei-Ye} L. MU, J. WANG, G. WEI, X. YE, S. ZHAO, Weak Galerkin methods for second order
    elliptic interface problems, arXiv:1201.6438v2, 2012, Journal of Computational Physics,
    doi:10.1016/j.jcp.2013.04.042, to appear.
    \bibitem{Mu-Wang-Ye1} L. MU, J. WANG,   X. YE, A weak Galerkin finite element methods with polynomial reduction, arXiv:1304.6481, submited to SIAM J on Scientific Computing.
    \bibitem{Mu-Wang-Ye2}L. MU, J. WANG,   X. YE, Weak Galerkin finite element methods on polytopal meshes,
    arXiv:1204.3655v2, submitted to International J of Nmerical Analysis and Modeling.
    \bibitem{Mu-Wang-Ye-Zhao}L. MU, J. WANG, X. YE,   S. ZHAO, A numerical study on the weak Galerkin method for
    the Helmholtz equation with large wave numbers, arXiv:1111.0671v1, 2011.
    \bibitem{RT}P. RAVIART, J. THOMAS,  A mixed finite element method for second order elliptic problems, Mathematical
    Aspects of teh Finite Element Method, I. Galligani, E. Magenes, eds., Lectures Notes in Math. 606, Springer-Verlag,
    New York, 1977.
    \bibitem{Wu-Chen-Xie-Xu}Y. WU, L. CHEN, X. XIE, J. XU, Convergence analysis of V-Cycle multigrid methods for anisotropic elliptic equations, 
    IMA Journal of Numerical Analysis, \textbf{32} (2012), 1329-1347.
    \bibitem{Xu1992} J. XU, Iterative methods by space decomposition and subspace correction. SIAM Rev., \textbf{34} (1992), 581-613.
    \bibitem{Xu1996} J. XU,  The auxiliary space method and optimal multigrid preconditioning techniques for unstructured grids.
    Computing, \textbf{56} (1996), 215-235.
    \bibitem{Xu1997} J. XU, An introduction to multigrid convergence theory. Iterative Methods in Scientific Computing (R. Chan,
    T. Chan \& G. Golub eds). Springer, 1997.
    \bibitem{Xu-Z2002} J. XU, L. ZIKATANOV, The method of alternating projections and the method of subspace 
    corrections in Hilbert space, J. Am. Math. Soc., \textbf{15} (2002), 573-597.	
    \bibitem{XU-CHEN} J. XU, L. CHEN, R. H. NOCHETTO, Optimal multilevel methods for H(grad), H(curl), 
    and H(div) systems on graded and unstructured grids, Multiscale, Nonlinear and Adaptive Approximation, 
    2009, 599-659.
    \end {thebibliography}

\end{document}